\numberwithin{equation}{section}
\newtheorem*{theorem*}{Theorem}
\newtheorem{lemma}{Lemma}[section]
\newtheorem{proposition}[lemma]{Proposition}
\newtheorem{remark}[lemma]{Remark}
\newtheorem{theorem}[lemma]{Theorem}
\newtheorem{corollary}[lemma]{Corollary}
\newtheorem{question}[lemma]{Question}
\newtheorem*{question*}{Question}
\newtheorem*{assumption*}{Assumption}
\newtheorem*{axiom*}{Axiom}
\newtheorem*{theorem*1}{Theorem (\ref{theta1})}
\newtheorem*{theorem*2}{Theorem (\ref{theta2})}
\newtheorem*{theorem*3}{Theorem (\ref{theta3})}
\newtheorem*{theorem*4}{Theorem (\ref{theta4})}
\newtheorem*{proposition*5}{Proposition (\ref{theta5})}
\newtheorem*{proposition*6}{Proposition (\ref{theta6})}
\sloppy \theoremstyle{plain}
\newcommand{\Aut}{\operatorname{Aut}}
\newcommand{\End}{\operatorname{End}}
\newcommand{\Hom}{\operatorname{Hom}}
\newcommand{\cInd}{\operatorname{c-Ind}}
\newcommand{\Ind}{\operatorname{Ind}}
\newcommand{\Res}{\operatorname{Res}}
\newcommand{\Ha}{\operatorname{H}}
\newcommand{\C}{\mathbb C}
\newcommand{\F}{\mathbb F}
\newcommand{\Q}{\mathbb Q}
\newcommand{\Z}{\mathbb Z}
\newcommand{\GL}{\operatorname{GL}}
\newcommand{\GSp}{\operatorname{GSp}}
\newcommand{\PGSp}{\operatorname{PGSp}}
\newcommand{\GPs}{\operatorname{GPs}}
\newcommand{\Sp}{\operatorname{Sp}}
\newcommand{\Ps}{\operatorname{Ps}}
\newcommand{\Span}{\operatorname{Span}}
\newcommand{\supp}{\operatorname{supp}}
\begin{document}
\title{Extended  Weil representations: the finite field cases}
\author{Chun-Hui Wang}
\address{School of Mathematics and Statistics\\Wuhan University \\Wuhan, 430072,
P.R. CHINA}
\keywords{$2$-cocycle, Metaplectic group, Weil representation}
\subjclass[2010]{11F27, 20C25}
\email{cwang2014@whu.edu.cn}
\begin{abstract}
It is well known(cf. Weil, G\'erardin's works) that  there are  two different Weil representations of a symplectic  group  over an odd finite field. By a twisted action,  we show that one can reorganize them    as a  representation of  a related  projective symplectic similitude  group. We also discuss the even  field case by following Genestier-Lysenko and  Gurevich-Hadani's works on geometric Weil representations in characteristic two. As a result, we approach some of their results from the lattice model, which is inspired by MVW, Prasad and Takeda's works.
\end{abstract}
\maketitle
\setcounter{secnumdepth}{3}
\tableofcontents{}
\section{Introduction}
Let $F$   firstly be   a finite field of  odd cardinality $q$, and let $(W, \langle, \rangle)$  be a symplectic vector space over $F$ of dimension $2m$. The Heisenberg group $\Ha(W)$, attached to $W$ and $F$, is a set $W\oplus F$ with the group law: $(w, t) (w', t')=(w+w', t+t' + \tfrac{\langle w, w'\rangle}{2}).$
Let $\Sp(W)$, $\GSp(W)$ be the isometric group, resp. isometric similitude group of $(W, \langle, \rangle)$. Let $\psi$ be  a non-trivial character of $F$.   According to the Stone-von Neumann theorem, there is only one equivalence class of irreducible complex representation $\pi_{\psi}$ of $\Ha(W)$ with central character $\psi$.
By Weil's celebrated paper \cite{We}, $\pi_{\psi}$ is a representation of $\Sp(W)\ltimes \Ha(W)$. The restriction of $\pi_{\psi}$ to $\Sp(W)$, now is well-known as the \emph{Weil representation};  G\'erardin investigated fully this representation in \cite{Ge}.  Following Shinoda \cite{Sh},   one can  extend it to the  symplectic similitude group $\GSp(W)$ simply by setting $\rho=\Ind_{\Sp(W)}^{\GSp(W)} \pi_{\psi}$. This is a big representation. Inspired  by \cite{Ba} and \cite{GePi}, we  show that  it is also possible to extend  $\pi_{\psi}$ to  a related projective symplectic similitude   group  by some twisted actions.  More precisely,  let  $a\in F^{\times}$, and  let $\psi^a$ denote another character of $F$ defined as $t\longmapsto \psi(at)$, for $t\in F$. Let $\pi_{\psi^a}$ be the Weil representation of $\Sp(W)$ associated to $\psi^a$. By \cite[p.36(4)]{MoViWa}, $\pi_{\psi^{at^{2}}}\simeq \pi_{\psi^a}$, for any $t\in F^{\times}$. Therefore, we define a quotient group $\PGSp^{\pm}(W)$ from $\GSp(W)$ such that  the following  exact sequence holds:
\begin{align*}
1 \longrightarrow \Sp(W)\longrightarrow \PGSp^{\pm}(W)\longrightarrow F^{\times}/F^{\times 2}\longrightarrow 1.
\end{align*}
We define such a group case by case according to  $q \equiv 1( \mod 4)$ or   $q \equiv 3( \mod 4)$.  Details can be found in the section \ref{oddcase}. As a consequence, we can define the twisted induced Weil representation of $\PGSp^{\pm}(W)$: $\rho_{\psi}=\Ind_{\Sp(W)}^{\PGSp^{\pm}(W)} \pi_{\psi}$, which consists of  the two  different Weil representations of  $\Sp(W)$. We also consider whether such representation can arise from the Heisenberg representation of $\Ha (W)$.

Let $F$   secondly be  a finite field of  characteristic $2$.  In this case, the situation is not the same as in the above odd field setting.  Originally, Weil  considered his representation as a representation of   a pseudosymplectic group. For generalizations  of  this part, one can see \cite{Bl} and \cite{Ta}. Later, in \cite{GeLy}, \cite{GuHa},  Genestier-Lysenko and Gurevich-Hadani  demonstrated that  the Weil representation can also be considered as a representation of a Metaplectic group, which is a $4$-covering over an affine symplectic group. It should be noted that an affine symplectic group is a  bigger group than the corresponding pseudosymplectic group.  In this paper, we will   follow the second point of view. However, Genestier-Lysenko and Gurevich-Hadani obtained their results mainly using  the algebraic-geometric methods. It's much more complicated and sophisticated.  As a result, a major part of this paper is to approach some of their results using the lattice model, which is inspired by  \cite{MoViWa}, \cite{Pr}, \cite{Ta}.  One can see the body part of Section \ref{even}. More precisely,    following their papers, let $K$ be a certain dyadic local field.  Let $\Psi$ be a certain non-trivial character of $K$. Let $\mathfrak{O}$ denote  the ring of integers of $K$.  Let $(\mathscr{W}, \langle, \rangle_{\mathscr{W}})$ be a vector space over $K$.  Let $\overline{\Sp(\mathscr{W})}$ denote the Metaplectic $8$-covering over $\Sp(\mathscr{W})$, attached to the Perrin-Rao's $2$-cocycle $c_{PR, \mathcal{X}^{\ast}}(-,-)$   in $\Ha^2(\Sp(\mathscr{W}), \mu_8)$ with respect to $\mathscr{W}=\mathcal{X}\oplus \mathcal{X}^{\ast}$ and $\Psi$. Let $(\Pi_{\Psi}, \mathcal{V}_{\Psi})$ be the corresponding  Weil representation of $\overline{\Sp(\mathscr{W})}$. To achieve our goal, we  consider the lattice model to realize this representation. We formulate  the precise actions of  some kind of elements in the  Metaplectic $8$-covering group instead of the  Metaplectic $\C^{\times}$-covering group,     which is  a little different  from  MVW, Takeda's works(cf.\cite{MoViWa}, \cite{Ta}).  In \cite[Thm.2]{Pr}, Prasad considered the restriction of the Weil representation to  some  open compact subgroup  in the non-dyadic local field case and obtained the irreducible components. Inspired by this result, we also consider the restriction of $\Pi_{\Psi}$ to some open compact subgroup  of $\Sp(\mathscr{W})$, and search  some subspace to realize the Weil representation as obtained by Genestier-Lysenko and Gurevich-Hadani. Following  Barthel's work \cite{Ba} in constructing the $2$-cocycle from $\Sp(\mathscr{W})$ to  $\GSp(\mathscr{W})$, we can easily  extend the Weil representation from  an affine symplectic   group to its similitude group. Some recent papers on various aspects of Weil representations can also be found here:
\cite{CMS},\cite{EhSk},\cite{Ga06},\cite{GuFrSo},\cite{GuVe},\cite{HiSc},\cite{KaTi1},\cite{KaTi2},\cite{Ro}, \cite{Sc},\cite{Ts},\cite{Zh},etc.
\section{Odd case}\label{oddcase}
Let $F=\F_q$ be  a finite field with $q=p^d$ and  $p$ is an odd prime number. Keep the notations of Introduction.  Let $\pi_{\psi}$ be the Weil representation of  $\Sp(W)\ltimes \Ha(W)$  associated to $\psi$. The representation $\pi_{\psi}$ can be realized on $\C[X]$ by the following formulas(cf. \cite[pp.65-66]{Ge}, \cite[pp.387-391]{Pe}, \cite[pp.351-360]{Ra}):
\begin{equation}\label{representationsp11}
\pi_{\psi}[1, (x,0)+(x^{\ast},0)+(0,k)]f(y)=\psi(k+\langle x+y,x^{\ast}\rangle) f(x+y),
\end{equation}
\begin{equation}\label{representationsp2}
\pi_{\psi}[ \begin{pmatrix}
  1&b\\
  0 & 1
\end{pmatrix}]f(y)=\psi(\tfrac{1}{2}\langle y,yb\rangle) f(y),
\end{equation}
\begin{equation}\label{representationsp3}
\pi_{\psi}[ \begin{pmatrix}
  a& 0\\
  0 &a^{\ast -1 }
\end{pmatrix}]f(y)=x_q^+(\det(a)) f(ya),
\end{equation}
\begin{equation}\label{representationsp4}
\pi_{\psi}[ \omega]f(y)=\gamma(\psi)^{-m}\sum_{x\in X}f(x)\psi(\langle x,y\omega\rangle),
\end{equation}
where symmetric  $b\in \Hom(X, X^{\ast})$, $a\in \Aut(X)$, and $a^{\ast} \in \Aut(X^{\ast})$ is the adjoint of $a$ with respect to the bilinear form $X \times X^{\ast} \longrightarrow F$ given by $(x, x^{\ast}) \longmapsto \langle x, x^{\ast} \rangle$, and $\{ e_1, \ldots, e_m\}$ is  an $F$-basis of $X$, and $\{ e^{\ast}_1, \ldots , e_m^{\ast}\}$  its symplectic  dual basis with respect to $\langle, \rangle$, $(e_i)\omega=-e_i^{\ast}$, $(e_i^{\ast})\omega=e_i$,  $\gamma(\psi)= \sum_{x\in F}\psi(\tfrac{x^2}{2}), x_q^{+}= \textrm{ Legendre symbol }
( \tfrac{ }{\mathbb{F}_q})$.

 Let $\lambda: \GSp(W) \longrightarrow F^{\times}$, be  the similitude factor map. Let $F^{\times}\Sp(W)=\{ h\in \GSp(W) \mid \lambda_h\in  F^{\times 2}\}=\{ tg\in \GSp(W)\mid t\in F^{\times},  g\in \Sp(W)\}$.
\subsection{ Case $q\equiv 3(\mod 4)$}  Assume $q-1=2l$, with $(2,l)=1$.  In this case,  $\chi^+_q(-1)=-1$, and $F^{\times} =F^{\times2} \times \{\pm 1\}$, $|F^{\times2}|=l$.   Let $\varpiup$ be a fixed generator of $F^{\times 2}$. Then $F^{\times 2}=\langle \varpiup^2 \rangle$. Let us define an abelian group homomorphism:
$$\tau: F^{\times 2} \longrightarrow F^{\times}; \varpiup^2 \longmapsto \varpiup^{-1}.$$
Let us consider a twisted right action of $F^{\times}\Sp(W)$ on $\Ha(W)$ in the following way:
\begin{equation}\label{alphaac}
\alpha:\Ha(W) \times F^{\times}\Sp(W) \longrightarrow \Ha(W); ((v,t), g) \longmapsto (\tau( \lambda_g)v g, t).
\end{equation}
The restriction of $\alpha$ on $\Sp(W)$ is the usual action. By observation,  the action factors through the following group homomorphism:
$$ \varsigma: F^{\times}\Sp(W) \longrightarrow \Sp(W); g \longmapsto \tau( \lambda_g)g.$$
So the restriction of $\alpha$ on $F^{\times}\Sp(W)$ is well-defined. Let us extend $\pi_{\psi}$  to $F^{\times}\Sp(W)$ by setting
 \begin{equation}\label{representationsp5}
 \pi_{\psi} [ \begin{pmatrix}
  a& 0\\
  0 &a
\end{pmatrix}] f(y)=x_q^+(a^m) f(\tau(a^2)ya), \qquad a\in F^{\times}.
\end{equation}
Then $\pi_{\psi}|_{F^{\times2}}$ is trivial. Note that  the extension is not unique, which at least can be twisted by a character of $F^{\times}/\{\pm 1\}$.
\begin{remark}
$\pi_{\psi}$ is not an irreducible representation of $F^{\times} \Sp(W)$.
\end{remark}
\begin{proof}
Assume the contrary. By Clifford theory, $\Res_{\Sp(W)}^{F^{\times }\Sp(W)}\pi_{\psi}$ only contains irreducible representations of the same dimension; this is not the fact.
\end{proof}

\begin{lemma}
$\pi_{\psi}$ is an irreducible representation of $ F^{\times}\Sp(W)\ltimes_{\alpha}\Ha(W)  $.
\end{lemma}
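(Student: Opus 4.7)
The plan is to reduce irreducibility on $F^{\times}\Sp(W)\ltimes_{\alpha}\Ha(W)$ to the Stone--von Neumann theorem applied to the subgroup $\Ha(W)$, thereby avoiding any direct computation on the full semi-direct product.

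First, I would confirm that the operators in (\ref{representationsp11})--(\ref{representationsp5}), together with the twisted action $\alpha$, genuinely assemble into a representation of $F^{\times}\Sp(W)\ltimes_{\alpha}\Ha(W)$. The essential intertwining relation to check is
\[
\pi_{\psi}(g)\,\pi_{\psi}(h)\,\pi_{\psi}(g)^{-1}=\pi_{\psi}\bigl(\alpha(h,g)\bigr),\qquad g\in F^{\times}\Sp(W),\ h\in\Ha(W).
\]
For $g\in\Sp(W)$ this is the classical intertwining identity for the Weil representation. For a scalar similitude $aI$ with $a\in F^{\times}$, one computes $\tau(a^{2})a=1$, so by (\ref{representationsp5}) the operator $\pi_{\psi}(aI)$ is the scalar $x_q^{+}(a^{m})\cdot\Id$, which automatically commutes with every $\pi_{\psi}(h)$, in agreement with $\alpha(h,aI)=h$. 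Since $\alpha$ factors through the homomorphism $\varsigma:g\mapsto\tau(\lambda_g)g\in\Sp(W)$, an arbitrary $g\in F^{\times}\Sp(W)$ decomposes as a scalar times an element of $\Sp(W)$, and the two cases combine to yield the general compatibility.

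Second, I would invoke the Stone--von Neumann theorem: $\pi_{\psi}|_{\Ha(W)}$ is the unique irreducible representation of $\Ha(W)$ with central character $\psi$, and in particular it is irreducible as an $\Ha(W)$-module. Because $\Ha(W)$ sits as a normal subgroup of $F^{\times}\Sp(W)\ltimes_{\alpha}\Ha(W)$, any subspace of $\C[X]$ stable under the full semi-direct product is \emph{a fortiori} $\Ha(W)$-stable, and must therefore equal $\{0\}$ or all of $\C[X]$.

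The only real subtlety is the compatibility verification in step one, which is essentially bookkeeping given the factorization through $\varsigma$ and the scalar nature of the operators $\pi_{\psi}(aI)$. Once this is settled, the irreducibility of $\pi_{\psi}$ on $F^{\times}\Sp(W)\ltimes_{\alpha}\Ha(W)$ follows immediately from the irreducibility of the underlying Heisenberg representation.
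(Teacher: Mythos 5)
Your high-level strategy — verify that the operators intertwine correctly with the twisted $\alpha$-action, then invoke Stone--von Neumann on the normal subgroup $\Ha(W)$ — is the right one and is what the paper is doing as well (the paper carries out the intertwining check explicitly and leaves the Stone--von Neumann step implicit). However, there is a concrete error in your scalar-case analysis.

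You assert $\tau(a^{2})a=1$ for all $a\in F^{\times}$, but this only holds for $a\in F^{\times 2}$. Recall $F^{\times}=F^{\times 2}\times\{\pm 1\}$ and $\tau(\varpiup^{2k})=\varpiup^{-k}$; writing $a=\epsilon\varpiup^{k}$ with $\epsilon\in\{\pm 1\}$ gives $a^{2}=\varpiup^{2k}$ and hence $\tau(a^{2})a=\varpiup^{-k}\cdot\epsilon\varpiup^{k}=\epsilon$. So for a non-square $a$, $\tau(a^{2})a=-1$. Consequently $\pi_{\psi}(aI)f(y)=x_q^{+}(a^{m})f(-y)$ is $x_q^{+}(a^{m})$ times the parity operator, not a scalar multiple of the identity, and $\alpha((v,t),aI)=(-v,t)\neq(v,t)$, so the asserted commutation "automatically" fails on exactly half of the scalars. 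The identity that actually holds universally, and that the paper's direct computation exploits, is the weaker $(\tau(a^{2})a)^{2}=1$, which is precisely what is needed to match terms like $\langle\tau(a^{2})ya,x^{\ast}\rangle$ with $\langle y,\tau(a^{-2})x^{\ast}a^{-1}\rangle$.

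Your argument can be repaired: since $-I\in\Sp(W)$, one may always choose the decomposition $g=bI\cdot s$ with the scalar part $b\in F^{\times 2}$ and $s\in\Sp(W)$, and then $\pi_{\psi}(bI)$ genuinely is the identity (indeed $x_q^{+}(b^{m})=1$ and $\tau(b^{2})b=1$), reducing everything to the classical $\Sp(W)$-intertwining. Alternatively, observe that $\varsigma(aI)=\tau(a^{2})a\cdot I=\pm I\in\Sp(W)$ and that $\pi_{\psi}(aI)=x_q^{+}(a^{m})\,\pi_{\psi}(\varsigma(aI))$, so the scalar case is itself an instance of the $\Sp(W)$ case. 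Either fix closes the gap; as written, however, the step is false and the "two cases combine" conclusion does not follow.
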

\begin{proof}
Let us check that the actions in  $(\ref{representationsp11})$ and $(\ref{representationsp5})$ are comparable. Take $(v,t)\in \Ha(W)$, $v=x+x^{\ast}$. Then $(v,t)=(x,0)+(x^{\ast}, 0)+(0, t-\tfrac{\langle x, x^{\ast}\rangle}{2})$. Take $g=\begin{pmatrix}
  a& 0\\
  0 &a
\end{pmatrix}$, for $a\in F^{\times}$.  As elements of $F^{\times}\Sp(W)\ltimes_{\alpha}\Ha(W)$, we have $$[g,0] [1, (v,t)] =[g,(v,t)]=[1, (\tau(\lambda_g)^{-1}vg^{-1},t)][ g, 0]=[1,(\tau(a^{-2})va^{-1},t)][ g, 0].$$
(i) $$\pi_{\psi}( [ g, 0])[\pi_{\psi}([1, (v,t)])f](y)=x_q^+(a^m)[\pi_{\psi}([1, (v,t)])f](\tau(a^2)ya)$$
$$=\psi(t+\tfrac{\langle x, x^{\ast}\rangle}{2}+\langle \tau(a^2)ya,x^{\ast}\rangle)x_q^+(a^m) f(x+\tau(a^2)ya);$$
(ii) $$\pi_{\psi}([1, (\tau(a^{-2})v a^{-1},t)])[\pi_{\psi}( [0, g]) f](y)$$
$$=\psi(t+\tfrac{1}{2}\langle \tau(a^{-2})xa^{-1},\tau(a^{-2})x^{\ast}a^{-1} \rangle+
\langle y, \tau(a^{-2})x^{\ast}a^{-1}\rangle (\pi_{\psi}( [0, g]) f)(y+\tau(a^{-2})x a^{-1})$$
$$=\psi(t+\tfrac{1}{2}\langle x,x^{\ast}\rangle+
\langle y, \tau(a^{-2})x^{\ast}a^{-1}\rangle ) x_q^+(a^m) f(\tau(a^{2})ya+x).$$
\end{proof}
\subsubsection{$\GSp(W)$ case}
Note that $ \GSp(W)/ F^{\times}\Sp(W) \simeq F^{\times}/F^{\times 2}\simeq \Z_2$.  Let us extend $\pi_{\psi}$  from  $F^{\times}\Sp(W)$ to $\GSp(W)$ by setting $\rho_{\psi}=\Ind_{F^{\times}\Sp(W)  }^{ \GSp(W)} \pi_{\psi}$.
\begin{lemma}
$\Res_{\Sp(W)}^{ \GSp(W)} \rho_{\psi} \simeq \pi_{\psi} \oplus \pi_{\psi^a}$, for any $a\in F^{\times} \setminus F^{\times 2}$.
\end{lemma}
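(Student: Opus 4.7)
The strategy is to apply Mackey's restriction-to-induction formula and then identify the resulting conjugate representation via Stone--von Neumann on the Heisenberg group.

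Since $F^\times\Sp(W)$ is normal in $\GSp(W)$ of index $2$, fix $\sigma \in \GSp(W) \setminus F^\times\Sp(W)$; then $a := \lambda_\sigma$ lies in $F^\times \setminus F^{\times 2}$, and (since $\pi_{\psi^{at^2}} \simeq \pi_{\psi^a}$) we may assume without loss of generality that $a$ matches the prescribed non-square modulo squares. The two double cosets in $\Sp(W) \backslash \GSp(W) / F^\times\Sp(W)$ are represented by $1$ and $\sigma$, and $\Sp(W) \cap \sigma F^\times\Sp(W)\sigma^{-1} = \Sp(W)$ by normality. Mackey's formula therefore collapses to
$$\Res_{\Sp(W)}^{\GSp(W)} \rho_\psi \;\simeq\; \pi_\psi \;\oplus\; (\pi_\psi)^{\sigma},$$
where $(\pi_\psi)^{\sigma}$ denotes the $\Sp(W)$-representation $g \mapsto \pi_\psi(\sigma^{-1} g \sigma)$, well-defined because $\Sp(W)$ is normal in $\GSp(W)$.

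It then suffices to show $(\pi_\psi)^{\sigma} \simeq \pi_{\psi^a}$. For this I would lift to the Heisenberg group. Define $\tilde\sigma \colon \Ha(W) \to \Ha(W)$ by $(w,t) \mapsto (w\sigma, a t)$. The similitude identity $\langle w\sigma, w'\sigma\rangle = a\langle w, w'\rangle$ shows directly that $\tilde\sigma$ is a group automorphism of $\Ha(W)$, acting on the centre as $t \mapsto at$. Hence $\pi_\psi \circ \tilde\sigma$ is an irreducible Heisenberg representation with central character $\psi^a$, equivalent to $\pi_{\psi^a}$ by Stone--von Neumann. Since $\tilde\sigma$ is compatible with conjugation by $\sigma$ on $\Sp(W)$ (both actions on $W$ intertwine through the similitude factor), this equivalence is simultaneously $\Sp(W)$-equivariant for the twisted and untwisted actions, yielding $(\pi_\psi)^{\sigma}|_{\Sp(W)} \simeq \pi_{\psi^a}$ and hence the desired decomposition.

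The main subtlety lies in the last equivariance step, since Stone--von Neumann a priori provides only an $\Ha(W)$-intertwiner; promoting it to an $\Sp(W)\ltimes\Ha(W)$-intertwiner relies on the essential uniqueness of the extension of $\pi_\psi|_{\Ha(W)}$ to $\Sp(W)\ltimes\Ha(W)$ (Weil's theorem, unique up to characters of $\Sp(W)$, which are trivial here). A coordinate-bound alternative is to specialize $\sigma = \diag(aI_m, I_m)$ and compare the conjugated operators $\pi_\psi(\sigma^{-1} g \sigma)$ with the formulas $(\ref{representationsp11})$--$(\ref{representationsp4})$ written for $\psi^a$ in place of $\psi$. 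The parabolic and Levi generators match tautologically; the only delicate verification is for the Weyl element $\omega$, which reduces to the classical Gauss-sum identity $\gamma(\psi^a) = x_q^+(a)\gamma(\psi)$, with residual Legendre symbols absorbed into the $x_q^+(\det)$ factor appearing in $(\ref{representationsp3})$.
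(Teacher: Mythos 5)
Your proposal is correct and follows essentially the same route as the paper: the paper invokes Clifford theory for the normal index-$2$ subgroup $F^\times\Sp(W) \subset \GSp(W)$ to get $\Res\,\rho_\psi \simeq \pi_\psi \oplus \pi_\psi^{h_a}$ and then cites as known that $\pi_\psi^{h_a}|_{\Sp(W)} \simeq \pi_{\psi^a}|_{\Sp(W)}$, which is exactly your Mackey decomposition followed by the Heisenberg/Stone--von Neumann identification of the conjugate. Your version simply fills in the last step (via the twisted Heisenberg automorphism $\tilde\sigma$, or alternatively by direct comparison of the Schr\"odinger-model formulas and the Gauss-sum identity $\gamma(\psi^a) = x_q^+(a)\gamma(\psi)$), which the paper leaves implicit with a reference.
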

\begin{proof}
Let $h_a= \begin{pmatrix}
1  & 0 \\
 0 & a
\end{pmatrix}$. Then $\GSp(W)=F^{\times}\Sp(W) \cup h_aF^{\times}\Sp(W)$. By Clifford theory, $\Res_{F^{\times}\Sp(W)}^{\GSp(W)} \rho_{\psi} \simeq \pi_{\psi} \oplus \pi_{\psi}^{h_a}$. It is known that $\pi_{\psi}^{h_a}|_{\Sp(W)}$ is isomorphic to  $\pi_{\psi^a}|_{\Sp(W)}$.
\end{proof}
Note that $\pi_{\psi}$ and $ \pi_{\psi^a}$ are  two different Weil representations of $\Sp(W)$.
\begin{corollary}
$\Res_{\Sp(W)}^{ \GSp(W)} \rho_{\psi}$ is independent of $\psi$, and $\rho_{\psi}$ contains two irreducible representations of $\GSp(W)$.
\end{corollary}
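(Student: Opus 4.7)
The strategy splits along the two assertions. For the first—independence of $\psi$—the plan is to apply the preceding lemma to an arbitrary nontrivial character $\psi'=\psi^c$, $c\in F^\times$, and invoke the scaling invariance $\pi_{\psi^{bt^2}}\simeq\pi_{\psi^b}$ from \cite[p.36(4)]{MoViWa}. If $c\in F^{\times 2}$, writing $c=t^2$ gives $\pi_{\psi^c}\simeq\pi_\psi$ and $\pi_{\psi^{ca}}\simeq\pi_{\psi^a}$ directly. If $c\notin F^{\times 2}$, then $c=au^2$ for some $u\in F^\times$, so $\pi_{\psi^c}\simeq\pi_{\psi^a}$, and since $ca\in F^{\times 2}$ we also get $\pi_{\psi^{ca}}\simeq\pi_\psi$; the two summands are merely swapped. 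Either way $\Res_{\Sp(W)}^{\GSp(W)}\rho_{\psi'}\simeq\pi_\psi\oplus\pi_{\psi^a}$, manifestly independent of $\psi$.

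For the second assertion, my plan is to compute the endomorphism algebra $\End_{\GSp(W)}(\rho_\psi)$ via Frobenius reciprocity combined with Mackey's formula. Since $F^\times\Sp(W)$ is normal of index two in $\GSp(W)$ with coset representative $h_a$, one obtains
$$\End_{\GSp(W)}(\rho_\psi)\ \simeq\ \End_{F^\times\Sp(W)}(\pi_\psi)\ \oplus\ \Hom_{F^\times\Sp(W)}(\pi_\psi,\pi_\psi^{h_a}).$$
By formula (\ref{representationsp5}) the scalar subgroup of $F^\times\Sp(W)$ acts on $\pi_\psi$ through the character $a\mapsto x_q^+(a^m)$, so the G\'erardin decomposition $\pi_\psi|_{\Sp(W)}=\pi_\psi^+\oplus\pi_\psi^-$ into two non-isomorphic irreducibles of distinct dimensions $(q^m\pm 1)/2$ lifts to a decomposition of $\pi_\psi$ into two non-isomorphic $F^\times\Sp(W)$-irreducibles, giving $\dim\End_{F^\times\Sp(W)}(\pi_\psi)=2$.

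The key nontrivial step, which I expect to be the main hurdle, is verifying that the second summand vanishes. I would reduce this to the classical fact (cf.\ \cite{Ge}) that the four Weil-type irreducibles $\pi_\psi^\pm$ and $\pi_{\psi^a}^\pm$ of $\Sp(W)$ are pairwise non-isomorphic. Granted this, the two $F^\times\Sp(W)$-modules $\pi_\psi$ and $\pi_\psi^{h_a}$ (whose $\Sp(W)$-restriction is $\pi_{\psi^a}$ by the preceding lemma's proof) share no irreducible constituents, so $\Hom_{F^\times\Sp(W)}(\pi_\psi,\pi_\psi^{h_a})=0$. Consequently $\dim_\C\End_{\GSp(W)}(\rho_\psi)=2$, which forces $\rho_\psi$ to decompose as a direct sum of exactly two non-isomorphic irreducibles of $\GSp(W)$.
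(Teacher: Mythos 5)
The paper states this corollary without a written proof, treating it as an immediate Clifford-theoretic consequence of the preceding lemma, and your argument follows essentially the same route. Your first part is exactly the intended observation (the lemma plus the $\pi_{\psi^{t^2}}\simeq\pi_\psi$ invariance), and your second part packages the Clifford theory as a Mackey computation $\dim\End_{\GSp(W)}(\rho_\psi)=2$ rather than the equivalent route of inducing each $F^\times\Sp(W)$-irreducible summand $\pi_\psi^\pm$ up to $\GSp(W)$ and observing irreducibility directly; both rest on the same key input you correctly isolate and cite (the pairwise non-isomorphism of the four constituents $\pi_\psi^\pm,\pi_{\psi^a}^\pm$, which is slightly stronger than the paper's stated remark that $\pi_\psi\not\simeq\pi_{\psi^a}$).
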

Let us consider the extension by adding the Heisenberg group. As $F^{\times} \simeq \{ \pm 1\} \times F^{\times 2}$, let us extend the action of $\tau$ from $F^{\times 2}$ to $F^{\times}$ by trivially acting on $\{\pm 1\}$. Let us extend  the above  twisted action from  $F^{\times}\Sp(W)$ to $\GSp(W)$  in the following way:
\begin{equation}\label{representationsp6}
\alpha:  \Ha(W)  \times \GSp(W) \longrightarrow \Ha(W); ((v,t), g) \longmapsto (\tau( \lambda_g)vg, \chi_q^+( \lambda_g)t).
\end{equation}
Let us check that it is well-defined. Note that $\GSp(W)=[F^{\times}\Sp(W)]h_{-1}=h_{-1}[F^{\times}\Sp(W)]$, for $h_{-1}=\begin{pmatrix}
  1& 0\\
  0 &-1
\end{pmatrix}$.  For $(v,t), (v',t')\in \Ha(W)$, $g\in  F^{\times}\Sp(W)$,

(i)
$$(v,t)\alpha(h_{-1})\alpha(h_{-1})=(v,t);$$

(ii)
$$(v,t)\alpha(g)\alpha(h_{-1})=[\tau( \lambda_g)vg, t]\alpha(h_{-1})=[\tau( \lambda_{h_{-1}})\tau( \lambda_g)vgh_{-1}, -t]$$
$$=[\tau( \lambda_{gh_{-1}})vgh_{-1}, -t]=[v,t]\alpha(gh_{-1});$$
(iii)
$$(v,t)\alpha(h_{-1})\alpha(g)\alpha(h_{-1})=[vh_{-1}, -t]\alpha(g)\alpha(h_{-1})=[\tau( \lambda_g)vh_{-1}g, -t]\alpha(h_{-1})$$
$$=[\tau( \lambda_g)vh_{-1}gh_{-1}, t]=[v,t]\alpha(h_{-1}gh_{-1});$$
(iv)
$$[(v,t)+(v',t')]\alpha(h_{-1})=[(v+v', t+t'+\tfrac{\langle v, v'\rangle}{2})]\alpha(h_{-1})=(-vh_{-1}-v'h_{-1},- t-t'-\tfrac{\langle v, v'\rangle}{2})];$$
$$(v,t)\alpha(h_{-1})+(v',t')\alpha(h_{-1})=(-v h_{-1}, -t) +(-v'h_{-1},-t')=(-v h_{-1}-v'h_{-1},-t-t'+\tfrac{\langle -v h_{-1},-v'h_{-1}\rangle}{2})$$
$$=(-vh_{-1}-v'h_{-1},- t-t'-\tfrac{\langle v, v'\rangle}{2})=[(v,t)+(v',t')]\alpha(h_{-1}).$$
\begin{lemma}
$\rho_{\psi}$ can extend to be  an irreducible representation of $ \GSp(W) \ltimes_{\alpha}\Ha(W)$.
\end{lemma}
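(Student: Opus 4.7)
The plan is to build the desired extension by inducing the representation $\pi_\psi$ of $F^\times\Sp(W)\ltimes_{\alpha}\Ha(W)$, already known to be irreducible by the previous lemma, up to the larger semi-direct product, and then verifying the two requirements separately: that it really extends $\rho_\psi$, and that it is irreducible.

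Concretely, since $F^\times\Sp(W)\ltimes_{\alpha}\Ha(W)$ is normal of index two inside $\GSp(W)\ltimes_{\alpha}\Ha(W)$, with $\{[1,0],[h_{-1},0]\}$ a set of coset representatives, I would set
$$\widetilde{\rho}_\psi:=\Ind_{F^\times\Sp(W)\ltimes_{\alpha}\Ha(W)}^{\GSp(W)\ltimes_{\alpha}\Ha(W)}\pi_\psi.$$
Because the chosen coset representatives lie inside $\GSp(W)$, the standard restriction formula for induced representations yields $\widetilde{\rho}_\psi|_{\GSp(W)}\simeq \Ind_{F^\times\Sp(W)}^{\GSp(W)}(\pi_\psi|_{F^\times\Sp(W)})=\rho_\psi$, confirming that $\widetilde{\rho}_\psi$ genuinely extends $\rho_\psi$.

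For irreducibility I would invoke Mackey's criterion for a normal subgroup of index two: $\widetilde{\rho}_\psi$ is irreducible if and only if the conjugate $\pi_\psi^{[h_{-1},0]}$ is not isomorphic to $\pi_\psi$ as a representation of $F^\times\Sp(W)\ltimes_{\alpha}\Ha(W)$. To distinguish them I would compare their central characters along $\{0\}\times F\subset\Ha(W)$. Using the multiplication law of the twisted semi-direct product (consistent with the checks (i)--(iv) above), conjugation by $[h_{-1},0]$ acts on $\Ha(W)$ via $\alpha(-,h_{-1})$, giving
$$[h_{-1},0]\,[1,(0,t)]\,[h_{-1},0]^{-1}=[1,(0,\chi_q^+(\lambda_{h_{-1}})t)]=[1,(0,-t)],$$
since $\lambda_{h_{-1}}=-1$ and $\chi_q^+(-1)=-1$ in the regime $q\equiv 3\pmod 4$. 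Hence $\pi_\psi^{[h_{-1},0]}$ has central character $\psi^{-1}$, which cannot equal $\psi$ (a non-trivial additive character in odd characteristic never satisfies $\psi=\psi^{-1}$). The two representations of $F^\times\Sp(W)\ltimes_{\alpha}\Ha(W)$ are therefore inequivalent, and Mackey's criterion delivers the irreducibility of $\widetilde{\rho}_\psi$.

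The main point to watch is the bookkeeping of the twisted action: the sign $\chi_q^+(\lambda_g)$ that was built into $\alpha$ when extending from $F^\times\Sp(W)$ to $\GSp(W)$ is exactly what flips the central character on the centre of $\Ha(W)$, and hence exactly what forces the two conjugate representations to be inequivalent. Once this sign is carried correctly through the semi-direct product multiplication, the remainder of the argument is formal.
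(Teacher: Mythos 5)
Your proposal is correct and takes essentially the same route as the paper: both induce $\pi_\psi$ from $F^\times\Sp(W)\ltimes_{\alpha}\Ha(W)$ to $\GSp(W)\ltimes_{\alpha}\Ha(W)$, note that restriction to $\GSp(W)$ recovers $\rho_\psi$, and prove irreducibility by showing the two conjugate components of the restriction to the index-two subgroup are inequivalent. The paper's proof simply asserts that the restriction is $\pi_\psi\oplus\pi_{\psi^{-1}}$ and observes these are distinct, whereas you spell out the underlying reason by tracking the twist $\chi_q^+(\lambda_{h_{-1}})=-1$ through the conjugation and comparing central characters on $\{0\}\times F$; this is a more explicit rendering of the same step, not a different argument.
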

\begin{proof}
Let us consider $\rho_{\psi}'=\Ind_{F^{\times}\Sp(W)\ltimes_{\alpha} \Ha(W)  }^{\GSp(W) \ltimes_{\alpha}\Ha(W)} \pi_{\psi}$. By Clifford theory, its restriction to $\GSp(W)$  is isomorphic with $\rho_{\psi}$. Moreover, $\rho_{\psi}'|_{F^{\times}\Sp(W) \ltimes_{\alpha} \Ha(W)  }$ contains two irreducible representations, and $\rho_{\psi}'$ extends $\pi_{\psi}\oplus \pi_{\psi^{-1}}$. Therefore, $\rho_{\psi}'|_{  F^{\times}\Sp(W) \ltimes_{\alpha}\Ha(W)}$ contains two different irreducible components. Then  $\rho_{\psi}'$ is irreducible.
\end{proof}
Let us define $\PGSp^{\pm}(W)= \GSp(W) /{F^{\times 2}}$. Then there exists the following exact sequence:
$$ 1 \longrightarrow \Sp(W) \longrightarrow  \PGSp^{\pm}(W) \longrightarrow F^{\times}/ F^{\times 2} \longrightarrow 1.$$
Moreover, the above action $\alpha$ factors through $\GSp(W) \longrightarrow \PGSp^{\pm}(W)  $, and  $\pi_{\psi}$ is a representation of $\Sp(W)\ltimes_{\alpha} \Ha(W)$. Hence, $\rho_{\psi}' $(resp. $\rho_{\psi}$) is indeed an  irreducible representation of $ \PGSp^{\pm}(W)\ltimes_{\alpha}\Ha(W)$(resp. $\PGSp^{\pm}(W)$). Moreover, the restriction of $\rho_{\psi}$ on $\Sp(W)$ consists of the two different Weil representations. Hence, it is independent of the choice of  $\psi$.

\subsection{ Case $q\equiv 1(\mod 4)$ I} Assume $q-1=2^n l$,  $2\nmid l$, and $n>1$. So $F^{\times}=F_l \times  F_{2^n}  $,  for some cyclic subgroups $F_l$, $F_{2^{n}}$  of order $l$, and $2^{n}$ respectively. Then $F_l\subseteq F^{\times 2}$. Let $\varpiup$ be a fixed generator of $F_l$. Then $F_l=\langle \varpiup^2 \rangle$. Let us define an abelian group homomorphism:
$$\tau: F_l \longrightarrow F^{\times}; \varpiup^2 \longmapsto \varpiup^{-1}.$$
As $F^{\times} =F_l \times  F_{2^n}$, $\tau$ can extend to $F^{\times}$ by trivially acting on $F_{2^n}$.
Let $\chi^+$ be the projection map from $F^{\times}$ to $F_{2^{n}}$. Clearly, $\chi^+$ is a group homomorphism.  Let us define  a twisted action of  $\GSp(W)$ on $\Ha(W)$ in the following way:
\begin{equation}\label{representationsp7}
\alpha:  \Ha(W) \times \GSp(W)\longrightarrow \Ha(W); [ (v,t), g] \longmapsto (\tau( \lambda_g)vg, \chi^+( \lambda_g)t).
\end{equation}
On $F_l\Sp(W)$, this  action factors through the following group homomorphism:
$$ \varsigma: F_l\Sp(W) \longrightarrow \Sp(W); g \longmapsto \tau( \lambda_g)g.$$
So the Weil representation  $\pi_{\psi}$  can extend to $F_l\Sp(W)\ltimes_{\alpha}\Ha(W)$. Let us define
$$\rho_{\psi}'=\Ind_{F_l\Sp(W)\ltimes_{\alpha} \Ha(W)  }^{\GSp(W) \ltimes_{\alpha}\Ha(W)} \pi_{\psi}.$$
\begin{lemma}
$\rho_{\psi}'$ is an irreducible representation.
\end{lemma}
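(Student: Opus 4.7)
The plan is to adapt the Clifford-theoretic argument already carried out in the $q\equiv 3\pmod 4$ case. First I would observe that $F_l\Sp(W)=\lambda^{-1}(F_l)$ is the pre-image under the similitude character of a subgroup of the abelian group $F^{\times}$, and is therefore normal in $\GSp(W)$. Consequently $H:=F_l\Sp(W)\ltimes_{\alpha}\Ha(W)$ is normal in $G:=\GSp(W)\ltimes_{\alpha}\Ha(W)$, with quotient $G/H\simeq F^{\times}/F_l\simeq F_{2^{n}}$. I would also note that $\pi_{\psi}$ is already irreducible on $\Sp(W)\ltimes\Ha(W)$ by Stone--von Neumann and Weil, hence it remains irreducible after extension to $H$.

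For the main step, take coset representatives $h_a:=\begin{pmatrix}1&0\\0&a\end{pmatrix}$ with $a\in F_{2^{n}}$, so that $\lambda_{h_a}=a$ exhausts $F^{\times}/F_l$. Using the definition of $\alpha$ in (\ref{representationsp7}) together with the semidirect-product multiplication rule established in the $q\equiv 3$ case, conjugation by $h_a$ acts on the center $\{[1,(0,t)]:t\in F\}$ of $\Ha(W)$ by
$$h_a^{-1}[1,(0,t)]h_a \;=\; [1,(0,\chi^{+}(\lambda_{h_a})\,t)] \;=\; [1,(0,at)].$$
Therefore the central character of the conjugate representation $\pi_{\psi}^{h_a}$ on the centre of $\Ha(W)$ is $t\mapsto\psi(at)$, whereas that of $\pi_{\psi}$ itself is $\psi$; because $\psi$ is non-trivial, these $2^{n}$ characters are pairwise distinct as $a$ ranges over $F_{2^{n}}$.

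Equivalently, since $H$ is normal in $G$, Mackey's formula gives
$$\Res_H^G\rho_{\psi}' \;\simeq\; \bigoplus_{a\in F_{2^{n}}}\pi_{\psi}^{h_a},$$
a sum of $2^{n}=[G:H]$ pairwise non-isomorphic irreducible constituents. Mackey's irreducibility criterion (applicable precisely because $H$ is normal) then forces $\rho_{\psi}'$ itself to be irreducible.

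The only real difficulty will be the careful bookkeeping of the semidirect-product conventions needed to verify the displayed conjugation formula — i.e.\ translating the right action $\alpha$ into the left conjugation action on $\Ha(W)$ inside $G$. Since the analogous computation has already been pinned down explicitly in the $q\equiv 3$ case, this step is routine rather than genuinely hard, and the central-character distinction does the rest of the work.
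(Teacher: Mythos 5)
Your proposal is correct and is essentially the paper's argument: both decompose $\Res \rho_{\psi}'$ over the normal subgroup into the $2^n$ conjugates $\pi_{\psi}^{h_t}$, distinguish them by their central characters $\psi^t$ (pairwise distinct since $\psi$ is non-trivial and $t$ ranges over $F_{2^n}$), and conclude irreducibility by Clifford/Mackey theory. Your version simply makes explicit the conjugation computation on the centre and the normality of $F_l\Sp(W)\ltimes_{\alpha}\Ha(W)$, which the paper leaves implicit.
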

\begin{proof}
 Assume $ \GSp(W)=\oplus_{t\in F_{2^n}}  h_t[F_l\Sp(W)]$, for $h_t=\begin{pmatrix}
  1& 0\\
  0 &t
\end{pmatrix}$. The restriction of $\rho_{\psi}'$ to $F^{\times}\Sp(W)\ltimes_{\alpha}\Ha(W) $ contains $2^{n}$-irreducible components of the forms $\pi_{\psi}^{h_t}$. Hence  $\rho_{\psi}'|_{\Sp(W)\ltimes \Ha(W) }\simeq \oplus_{t\in F_{2^n}}\pi_{\psi}^{h_t}$. By the above action,  $\pi_{\psi}^{h_t}|_{ \Sp(W)\ltimes \Ha(W)} \simeq \pi_{\psi^{t}}$. Notice that $\pi_{\psi^{t}}$ are different, for different $t$. So these $\pi_{\psi}^{h_t}$ are different irreducible representations. By Clifford theory,  $\rho_{\psi}'$ is irreducible.
\end{proof}
Note that $\rho_{\psi}'|_{\Sp(W)}$ contains more components than  $[\pi_{\psi}\oplus\pi_{\psi^{a}}]|_{\Sp(W)} $, for some $a\in F^{\times}\setminus F^{\times 2}$.

\subsection{ Case $q\equiv 1(\mod 4)$ II}
 Let us consider another possible way to extend the Weil representation by some twisted actions.
  \subsubsection{$\widetilde{F^{\times}}$} Let us firstly consider $F^{\times}$ and its subgroup $F^{\times 2}$.  Let us fix    a square root map:
$$\sqrt{\quad}: F^{\times 2} \longrightarrow F^{\times},$$ such that $(\sqrt{a})^2=a$.  For $a, b \in F^{\times 2}$,
$\sqrt{a}\sqrt{b} =c_{\sqrt{\ }}(a,b)\sqrt{ab}$, for some $c_{\sqrt{\ }}(a,b) \in \{ \pm 1 \}$. Assume $\sqrt{1}=1$. Then:
$$\sqrt{a}\sqrt{b}\sqrt{c}=c_{\sqrt{\ }}(a,b)\sqrt{ab}\sqrt{c}=c_{\sqrt{\ }}(a,b)c_{\sqrt{\ }}(ab, c)\sqrt{abc}=c_{\sqrt{\ }}(b,c)\sqrt{a}\sqrt{bc}=c_{\sqrt{\ }}(b,c)c_{\sqrt{\ }}(a,bc)\sqrt{abc},$$
$$\sqrt{1}\sqrt{b}=\sqrt{b}=c_{\sqrt{\ }}(1,b)\sqrt{b}=c_{\sqrt{\ }}(b,1)\sqrt{b}\sqrt{1}.$$
Hence $c_{\sqrt{\ }}(-,-)$ defines a $2$-cocycle from $F^{\times 2}\times F^{\times 2}$ to $\{ \pm 1 \}$. Let $\widetilde{F^{\times 2}}$ be the corresponding  central extension of $F^{\times 2} $ by $\{ \pm 1 \}$. Since $c_{\sqrt{\ }}$ is symmetric, $\widetilde{F^{\times 2}}$ is an abelian group. Then there exists a group homomorphism:
$$\sqrt{\quad}: \widetilde{F^{\times 2}} \longrightarrow F^{\times}, [a, \epsilon] \longmapsto \sqrt{a} \epsilon.$$
If $\sqrt{a} \epsilon=1$, then $\sqrt{a}=\pm 1$, and then $a=1$, $\sqrt{a}=1$, $\epsilon=1$. So it is also an injective map. Since both sides are finite groups, it is an isomorphism by comparing the orders.

 It is known that there exists a short exact sequence of groups:
\begin{equation}\label{fff}
1\longrightarrow F^{\times 2} \longrightarrow F^{\times} \longrightarrow F^{\times}/F^{\times 2} \longrightarrow 1.
\end{equation}
For an element $t\in F^{\times}$, let $\dot{t}$  denote its reduction in $F^{\times}/F^{\times 2}$.  It is known that $ F^{\times}/F^{\times 2}$ has order $2$. Assume $ F^{\times}/F^{\times 2}=\{ \dot{1}, \dot{\xi}\}$. Note that $F^{\times} \simeq F_l\times F_{2^n}$, and $F_l\subseteq F^{\times 2}$. So we choose $\xi \in F_{2^n}$.
Let $\kappa$ be the canonical section map from $F^{\times}/F^{\times 2}$ to $F^{\times}$ such that $\kappa( \dot{1})=1$, $\kappa(\dot{\xi})=\xi$.

By (\ref{fff}),  $F^{\times}$ can be viewed as a central extension of $F^{\times}/F^{\times 2}$ by $F^{\times 2}$. Let $c'(-,-)$ denote the $2$-cocycle associated to $\kappa$. More precisely, for two elements $t_1, t_2\in F^{\times}$, let us write $t_1=a_1^2 \kappa(\dot{t}_1)$, $t_2=a_2^2 \kappa(\dot{t}_2)$. Then:
 $$t_1t_2=(a_1a_2)^2 \kappa(\dot{t}_1) \kappa(\dot{t}_2)=(a_1a_2)^2 c'(\dot{t}_1, \dot{t}_2)\kappa(\dot{t}_1\dot{t}_2).$$
 Hence $c'(\dot{t}_1, \dot{t}_2)=c'(\dot{t}_2, \dot{t}_1)$, and $c'(\dot{1}, \dot{t}_2)=1$,  $c'(\dot{\xi}, \dot{\xi})=\xi^2$.
   Recall that there exists a canonical map: $$\iota:F^{\times 2} \longrightarrow \widetilde{F^{\times 2}}; a \longmapsto [a,1].$$
Through $\iota$, we view $c'(-,-)$ as a map from $F^{\times}/F^{\times 2} \times F^{\times}/F^{\times 2}$ to $\widetilde{F^{\times 2}}$. It can be checked that this map also  defines a $2$-cocycle. We denote it by $c''(-,-)$ from now on.  Associated to this $2$-cocycle, there exists an exact sequence:
\begin{equation}\label{fffww}
1\longrightarrow \widetilde{F^{\times 2}} \longrightarrow \widetilde{F^{\times}} \longrightarrow F^{\times}/F^{\times 2} \longrightarrow 1.
\end{equation}
\begin{lemma}
There exists a group homomorphism:
$$ \widetilde{F^{\times }} \longrightarrow F^{\times}; ([g,\epsilon], \dot{t}) \longmapsto [g, \dot{t}]$$ such that
 the following  commutative diagram holds:
\[
\begin{CD}
 @. 1@. 1 @.1 @. \\
@.@VVV @VVV  @VVV \\
1 @>>> \{\pm 1\}@>>>   \{\pm 1\} @>>>1 @>>> 1\\
@.@VVV @VVV  @VVV \\
1 @>>> \widetilde{F^{\times 2}}@>>>  \widetilde{F^{\times}} @>>>F^{\times}/F^{\times 2} @>>> 1\\
@.@VVV @VVV @VV{=}V \\
1 @>>> F^{\times 2}@>>> F^{\times} @>>>F^{\times}/F^{\times 2} @>>> 1\\
@.@VVV @VVV  @VVV \\
 @. 1@. 1 @.1 @. \\
\end{CD}
\]
\end{lemma}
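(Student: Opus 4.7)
The plan is to write down the map explicitly, verify multiplicativity by a direct computation, and then deduce the rest of the diagram by inspection.

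First, I would take as the candidate map
$$\phi\colon \widetilde{F^{\times}}\longrightarrow F^{\times},\qquad \phi\bigl(([g,\epsilon],\dot t)\bigr)=g\cdot\kappa(\dot t),$$
namely, forget the sign $\epsilon$ and use the section $\kappa$ of (\ref{fff}) to lift the $F^{\times}/F^{\times 2}$-coordinate into $F^{\times}$. To check multiplicativity, I would unwind the product in $\widetilde{F^{\times}}$ using both $2$-cocycles:
$$\bigl([g_1,\epsilon_1],\dot t_1\bigr)\bigl([g_2,\epsilon_2],\dot t_2\bigr)=\bigl([\,g_1 g_2 c'(\dot t_1,\dot t_2),\; \epsilon_1\epsilon_2\, c_{\sqrt{\ }}(g_1,g_2)\, c_{\sqrt{\ }}(g_1 g_2,\, c'(\dot t_1,\dot t_2))\,],\; \dot t_1\dot t_2\bigr).$$
Applying $\phi$ discards all the $\{\pm 1\}$-valued sign contributions and produces $g_1 g_2 c'(\dot t_1,\dot t_2)\,\kappa(\dot t_1\dot t_2)$, which by the defining relation $\kappa(\dot t_1)\kappa(\dot t_2)=c'(\dot t_1,\dot t_2)\kappa(\dot t_1\dot t_2)$ of $c'$ equals $\bigl(g_1\kappa(\dot t_1)\bigr)\bigl(g_2\kappa(\dot t_2)\bigr)$ in the abelian group $F^{\times}$. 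Hence $\phi$ is a group homomorphism.

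Second, I would read off the remaining data from this formula. The kernel of $\phi$ consists of elements $([1,\pm 1],\dot 1)$, so it is precisely the central $\{\pm 1\}$ sitting inside $\widetilde{F^{\times 2}}\subset\widetilde{F^{\times}}$; an order count $|\widetilde{F^{\times}}|=|\widetilde{F^{\times 2}}|\cdot|F^{\times}/F^{\times 2}|=2|F^{\times}|$ then forces $\phi$ to be surjective. Commutativity of the two lower squares of the diagram is automatic: the composite $\widetilde{F^{\times 2}}\hookrightarrow\widetilde{F^{\times}}\xrightarrow{\phi}F^{\times}$ sends $[g,\epsilon]\mapsto g\in F^{\times 2}\subset F^{\times}$, matching the left-column map, while along the bottom $g\kappa(\dot t)\mapsto\dot t$ since $g\in F^{\times 2}$. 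Exactness of the middle column is then built in.

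The main obstacle is purely bookkeeping: tracking the three cocycles $c_{\sqrt{\ }}$, $c'$, $c''=\iota\circ c'$ and the two sections $\iota,\kappa$, so that the $\{\pm 1\}$-valued corrections from $c_{\sqrt{\ }}$ vanish cleanly under $\phi$ while the $F^{\times 2}$-valued cocycle $c'$ gets absorbed into the relation defining $\kappa$. Once the cocycle arithmetic is carried out carefully, the underlying content is transparent: $\{\pm 1\}\subset\widetilde{F^{\times 2}}\subset\widetilde{F^{\times}}$ is central, and quotienting by it yields an extension of $F^{\times}/F^{\times 2}$ by $F^{\times 2}$ whose classifying cocycle is precisely $c'$, so that quotient is nothing but the original extension (\ref{fff}).
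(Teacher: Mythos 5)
Your proposal is correct and follows essentially the same route as the paper: you write the map as $([g,\epsilon],\dot t)\mapsto g\kappa(\dot t)$, which is exactly the paper's $[g,\dot t]$ read through the section $\kappa$, and you verify multiplicativity by the same cocycle computation comparing $c''$, $c'$, and $c_{\sqrt{\ }}$. The additional checks you include (kernel, surjectivity by order count, commutativity of the squares) are the parts the paper declares "clear," and your verification of them is accurate.
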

\begin{proof}
It is clear that the map satisfies the commutative property. So it suffices to show that  the map is also a group homomorphism. For $([g_1, \epsilon_1], \dot{t}_1)$, $([g_1, \epsilon_1], \dot{t}_1) \in  \widetilde{F^{\times}} $,
$$([g_1, \epsilon_1], \dot{t}_1)([g_2, \epsilon_2], \dot{t}_2)=([g_1, \epsilon_1][g_2, \epsilon_2]c''( \dot{t}_1,  \dot{t}_2), \dot{t}_1\dot{t}_2)$$
$$=([g_1, \epsilon_1][g_2, \epsilon_2][c'( \dot{t}_1,  \dot{t}_2), 1], \dot{t}_1\dot{t}_2)=([g_1g_2c'(\dot{t}_1,\dot{t}_2),c_{\sqrt{\ }}(g_1, g_2)c_{\sqrt{\ }}(g_1g_2, c'(\dot{t}_1,\dot{t}_2))], \dot{t}_1\dot{t}_2).$$
On the other hand, in $F^{\times}$:
$$[g_1, \dot{t}_1][g_2, \dot{t}_2]=[g_1g_2c'(\dot{t}_1,\dot{t}_2), \dot{t}_1\dot{t}_2].$$
\end{proof}
Moreover, associated to  the middle column of the above commutative diagram, it is a central extension of $F^{\times}$ by $\{\pm 1\}$. Let us denote the corresponding  $2$-cocycle by $c'''(-,-)$. Let us write down the explicit expression as follows:

For  $t_1=a_1^2 \kappa(\dot{t}_1)$, $t_2=a_2^2 \kappa(\dot{t}_2)\in F^{\times}$, they correspond to two elements
$([a_1^2,1], \dot{t}_1)$, $([a_2^2,1], \dot{t}_2)$ of $\widetilde{F^{\times}}$, with $ [a_1^2,1], [a_2^2,1]\in \widetilde{F^{\times 2}}$, and $\dot{t}_1, \dot{t}_2\in F^{\times}/F^{\times 2}$. Then:
$$t_1t_2=(a_1a_2)^2 \kappa(\dot{t}_1) \kappa(\dot{t}_2)=(a_1a_2)^2 c'(\dot{t}_1, \dot{t}_2)\kappa(\dot{t}_1\dot{t}_2).$$
Hence $t_1t_2$ corresponds to the element
$([(a_1a_2)^2 c'(\dot{t}_1, \dot{t}_2),1], \dot{t}_1\dot{t}_2)$ of $\widetilde{F^{\times}}$, with  $ [(a_1a_2)^2 c'(\dot{t}_1, \dot{t}_2),1]\in \widetilde{F^{\times 2}}$, and $\dot{t}_1\dot{t}_2\in F^{\times}/F^{\times 2}$. In $\widetilde{F^{\times}}$:
\begin{equation}
\begin{split}
([a_1^2,1], \dot{t}_1)([a_2^2,1], \dot{t}_2)&=([a_1^2,1][a_2^2,1]c''(\dot{t}_1,\dot{t}_2), \dot{t}_1\dot{t}_2)\\
 &=([a_1^2,1][a_2^2,1][c'(\dot{t}_1,\dot{t}_2),1], \dot{t}_1\dot{t}_2)\\
 &=\big(a_1^2a_2^2c'(\dot{t}_1,\dot{t}_2),c_{\sqrt{\ }}(a_1^2, a_2^2)c_{\sqrt{\ }}(a_1^2a_2^2, c'(\dot{t}_1,\dot{t}_2)), \dot{t}_1\dot{t}_2\big).
 \end{split}
 \end{equation}
On the other hand, $$[t_1, 1][t_2,1]=[t_1t_2, c'''(t_1, t_2)].$$
Therefore,
$$c'''(t_1,t_2)=c_{\sqrt{\ }}(a_1^2, a_2^2)c_{\sqrt{\ }}(a_1^2a_2^2, c'(\dot{t}_1,\dot{t}_2)),$$
for $t_1=a_1^2 \kappa(\dot{t}_1)$, $t_2=a_2^2 \kappa(\dot{t}_2) \in F^{\times}$.
\subsubsection{$\widetilde{\GSp}(W)$} Let us extend these $2$-cocycles to $\GSp(W)$.
It is known that there exist two   exact sequences of groups:
$$1\longrightarrow \Sp(W) \longrightarrow \GSp(W) \longrightarrow F^{\times} \longrightarrow 1, $$
$$1\longrightarrow F^{\times}\Sp(W) \longrightarrow \GSp(W) \longrightarrow F^{\times}/F^{\times 2} \longrightarrow 1.$$
Through the projection $\GSp(W) \longrightarrow F^{\times}$, we lift $c'''(-,-)$ from $F^{\times}$ to $\GSp(W)$. Associated to this cocycle, there exists an extension  of $\GSp(W)$ by $\{\pm 1\}$:
$$1 \longrightarrow \{\pm 1\} \longrightarrow \widetilde{\GSp}(W) \longrightarrow \GSp(W) \longrightarrow 1.$$
Composed with $\GSp(W) \longrightarrow F^{\times}/F^{\times 2}$, there  is  a group homomorphism: $ \widetilde{\GSp}(W) \longrightarrow F^{\times}/F^{\times 2}$. Let $\widetilde{F^{\times }\Sp(W)}$ denote its kernel. Then there exists the following commutative diagram:
\[
\begin{CD}
 @. 1@. 1 @.1 @. \\
@.@VVV @VVV  @VVV \\
1 @>>> \{\pm 1\}@>>>   \{\pm 1\} @>>>1 @>>> 1\\
@.@VVV @VVV  @VVV \\
1 @>>> \widetilde{F^{\times }\Sp(W)}@>>>  \widetilde{\GSp}(W) @>>>F^{\times}/F^{\times 2} @>>> 1\\
@.@VVV @VVV @VV{=}V \\
1 @>>> F^{\times}\Sp(W)@>>> \GSp(W)  @>>>F^{\times}/F^{\times 2} @>>> 1\\
@.@VVV @VVV  @VVV \\
 @. 1@. 1 @.1 @. \\
\end{CD}
\]
\begin{lemma}
There exists a group homomorphism:
$\widetilde{\lambda}: \widetilde{\GSp}(W)  \longrightarrow \widetilde{F^{\times}}; [g, \epsilon] \longmapsto [\lambda_g, \epsilon]$.
\end{lemma}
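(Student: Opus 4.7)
The plan is to verify the homomorphism property by direct comparison of the multiplication laws on both sides, exploiting the fact that $\widetilde{\GSp}(W)$ was constructed precisely by pulling back, through the similitude character $\lambda\colon\GSp(W)\to F^{\times}$, the $\{\pm 1\}$-central extension $\widetilde{F^{\times}}\to F^{\times}$ attached to $c'''(-,-)$. First I would make explicit the identification of $\widetilde{F^{\times}}$ as a $\{\pm 1\}$-central extension of $F^{\times}$, as derived at the end of the preceding subsection; then every element of $\widetilde{F^{\times}}$ can be written unambiguously as $[t,\epsilon]$ with multiplication law
\[
[t_1,\epsilon_1][t_2,\epsilon_2]=\bigl[t_1t_2,\epsilon_1\epsilon_2\,c'''(t_1,t_2)\bigr].
\]
Similarly, because $\widetilde{\GSp}(W)$ is by definition the $\{\pm 1\}$-central extension associated to $\lambda^{\ast}c'''$, its multiplication reads
\[
[g_1,\epsilon_1][g_2,\epsilon_2]=\bigl[g_1g_2,\epsilon_1\epsilon_2\,c'''(\lambda_{g_1},\lambda_{g_2})\bigr].
\]

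With these two formulas in hand, the verification is immediate: combining the multiplicativity $\lambda_{g_1g_2}=\lambda_{g_1}\lambda_{g_2}$ of the similitude character with the two displayed product laws gives
\[
\widetilde{\lambda}\bigl([g_1,\epsilon_1][g_2,\epsilon_2]\bigr)=\bigl[\lambda_{g_1}\lambda_{g_2},\epsilon_1\epsilon_2\,c'''(\lambda_{g_1},\lambda_{g_2})\bigr]=[\lambda_{g_1},\epsilon_1][\lambda_{g_2},\epsilon_2],
\]
which is exactly the homomorphism property. The only point requiring attention is a bookkeeping check that the image of $\widetilde{\lambda}$ is being read in the $\{\pm 1\}$-extension presentation of $\widetilde{F^{\times}}$, rather than in the mixed presentation via $\widetilde{F^{\times 2}}$ and $F^{\times}/F^{\times 2}$; this is exactly the role of the explicit formula for $c'''$ recorded earlier. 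Beyond this bookkeeping the statement contains no substantive obstacle: it is the general principle that pullback of a central extension along a group homomorphism yields a morphism of central extensions, applied to $\lambda\colon \GSp(W)\to F^{\times}$.
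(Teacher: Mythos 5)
Your proof is correct and uses exactly the same argument as the paper: both verify the homomorphism property directly by writing out the products $[g_1,\epsilon_1][g_2,\epsilon_2]=[g_1g_2,\,c'''(\lambda_{g_1},\lambda_{g_2})\epsilon_1\epsilon_2]$ in $\widetilde{\GSp}(W)$ and $[\lambda_{g_1},\epsilon_1][\lambda_{g_2},\epsilon_2]$ in $\widetilde{F^{\times}}$, then comparing via $\lambda_{g_1g_2}=\lambda_{g_1}\lambda_{g_2}$. Your additional framing — that this is the universal property of pullback of a central extension along $\lambda$ — is a correct and useful conceptual gloss, but the computational content is identical to the paper's.
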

\begin{proof}
For $[g_1, \epsilon_1] , [g_2, \epsilon_2] \in \widetilde{\GSp}(W)$, $$[g_1, \epsilon_1] [g_2, \epsilon_2]=[g_1g_2, c'''(\lambda_{g_1},\lambda_{g_2}) \epsilon_1 \epsilon_2].$$
So $$\widetilde{\lambda}([g_1, \epsilon_1] )\widetilde{\lambda}([g_2, \epsilon_2]  )=[\lambda_{g_1}, \epsilon_1][\lambda_{g_2}, \epsilon_2]=[\lambda_{g_1}\lambda_{g_2}, c'''(\lambda_{g_1},\lambda_{g_2}) \epsilon_1 \epsilon_2],$$
$$\widetilde{\lambda}([g_1, \epsilon_1] [g_2, \epsilon_2])=\widetilde{\lambda}([g_1g_2, c'''(\lambda_{g_1},\lambda_{g_2}) \epsilon_1 \epsilon_2])=[\lambda_{g_1g_2}, c'''(\lambda_{g_1},\lambda_{g_2}) \epsilon_1 \epsilon_2].$$
\end{proof}
Note that the projection  $ \widetilde{\GSp}(W) \longrightarrow F^{\times}/F^{\times 2}$,  factors through $\widetilde{\lambda}$.
\begin{lemma}\label{ste}
\begin{itemize}
\item[(1)] $\Sp(W) \simeq \ker(\widetilde{\lambda}); g \longmapsto [g,1]$.
\item[(2)] There exists a group monomorphsim: $\nu:\widetilde{F^{\times 2}} \longrightarrow \widetilde{\GSp}(W); [a, \epsilon] \longrightarrow [\sqrt{a}\epsilon, \epsilon]$.   Moreover the image lies in the center of  $\widetilde{\GSp}(W)$, and $ \widetilde{F^{\times2}} \cap \Sp(W)=\{1\}$.
\item[(3)] The above group $\widetilde{F^{\times }\Sp(W)} =\widetilde{F^{\times 2}} \Sp(W)$.
\end{itemize}
\end{lemma}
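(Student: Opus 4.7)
The plan is to handle the three statements separately, exploiting the explicit formula for $c'''$ derived just before the lemma together with the fact that a scalar matrix in $\GSp(W)$ has similitude factor equal to its square.

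For (1), I would first unwind the definition of $\widetilde{\lambda}$: an element $[g,\epsilon] \in \widetilde{\GSp}(W)$ lies in $\ker(\widetilde{\lambda})$ exactly when $[\lambda_g, \epsilon] = [1,1]$ in $\widetilde{F^{\times}}$, which via the isomorphism $\widetilde{F^{\times 2}} \simeq F^{\times}$ used in the construction forces $\lambda_g = 1$ and $\epsilon = 1$. Set-theoretically this identifies $\ker(\widetilde{\lambda})$ with $\{[g, 1] : g \in \Sp(W)\}$. To upgrade the bijection $g \mapsto [g,1]$ to a group homomorphism I only need $c'''(\lambda_{g_1}, \lambda_{g_2}) = c'''(1,1) = 1$ on $\Sp(W) \times \Sp(W)$, which is immediate from the explicit formula for $c'''$ with $a_1 = a_2 = 1$ and $\dot{t}_1 = \dot{t}_2 = \dot{1}$.

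For (2), I identify $\sqrt{a}\epsilon$ with its embedding as a scalar matrix in $\GSp(W)$, whose similitude factor is $(\sqrt{a}\epsilon)^2 = a$, confirming $[\sqrt{a}\epsilon, \epsilon] \in \widetilde{\GSp}(W)$. Writing $g_i = \sqrt{a_i}\epsilon_i$, the product $\nu([a_1,\epsilon_1])\nu([a_2,\epsilon_2])$ in $\widetilde{\GSp}(W)$ equals $[\sqrt{a_1}\sqrt{a_2}\epsilon_1\epsilon_2,\, c'''(a_1,a_2)\epsilon_1\epsilon_2]$; since $\dot{a}_i = \dot{1}$ the formula for $c'''$ collapses to $c_{\sqrt{\ }}(a_1,a_2)$, and combining this with the identities $\sqrt{a_1}\sqrt{a_2} = c_{\sqrt{\ }}(a_1,a_2)\sqrt{a_1 a_2}$ and $[a_1,\epsilon_1][a_2,\epsilon_2] = [a_1 a_2, c_{\sqrt{\ }}(a_1,a_2)\epsilon_1\epsilon_2]$ in $\widetilde{F^{\times 2}}$ yields $\nu([a_1,\epsilon_1][a_2,\epsilon_2])$. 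Injectivity is clear since $[\sqrt{a}\epsilon,\epsilon] = [1,1]$ forces $\epsilon = 1$ and $\sqrt{a} = 1$, hence $a = 1$. Centrality follows from the symmetry of $c'''$ (inherited from symmetry of $c'$ and $c_{\sqrt{\ }}$) together with the fact that a scalar matrix commutes with every element of $\GSp(W)$. Finally, $\widetilde{F^{\times 2}} \cap \Sp(W) = \{1\}$ because $[\sqrt{a}\epsilon, \epsilon] = [h, 1]$ with $h \in \Sp(W)$ forces $\epsilon = 1$ and $h = \sqrt{a}$ to be a symplectic scalar, so $\sqrt{a} \in \{\pm 1\}$; the normalization $\sqrt{1} = 1$ then forces $a = 1$.

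For (3), one inclusion is immediate: both $\nu(\widetilde{F^{\times 2}})$ and the image of $\Sp(W)$ project into $F^{\times}\Sp(W)$, hence land in $\widetilde{F^{\times}\Sp(W)}$. For the reverse inclusion, given $[g,\epsilon]$ with $\lambda_g = a \in F^{\times 2}$, I would set $h = \epsilon\sqrt{a}^{-1} g \in \Sp(W)$ and verify the identity
\begin{equation*}
[g, \epsilon] = \nu([a, \epsilon]) \cdot [h, 1]
\end{equation*}
by direct calculation, using $c'''(a, 1) = 1$ from the explicit formula. The main bookkeeping burden, and essentially the only genuine calculation in the whole lemma, is confirming that the restriction of $c'''$ to $F^{\times 2} \times F^{\times 2}$ coincides with $c_{\sqrt{\ }}$; once that is in place the remainder reduces to formal manipulation of central extensions and the commutativity of scalar matrices.
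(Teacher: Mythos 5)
Your proof is correct and follows essentially the same path as the paper: you observe that $c'''$ restricted to $F^{\times2}\times F^{\times2}$ collapses to $c_{\sqrt{\ }}$, use that to verify $\nu$ is a homomorphism, derive centrality from the symmetry of $c'''$ together with commutativity of scalar matrices, and prove (3) via the same explicit factorization $[g,\epsilon]=\nu([a,\epsilon])\cdot[h,1]$ with $h=\epsilon\sqrt{a}^{-1}g\in\Sp(W)$. One small gloss on your proof of (1): the implication ``$[\lambda_g,\epsilon]=[1,1]$ forces $\lambda_g=1$ and $\epsilon=1$'' is immediate because $\widetilde{F^{\times}}$ is, as a set, $F^{\times}\times\{\pm1\}$ via the middle column of the commutative diagram; the isomorphism $\widetilde{F^{\times2}}\simeq F^{\times}$ you cite there is not what is doing the work.
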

\begin{proof}
1) $\ker(\widetilde{\lambda})=\{ [g, 1]\mid g\in \Sp(W)\}$. Moreover, $c'''(\lambda_{g_1}, \lambda_{g_2})=1$, for any $g_1, g_2\in \Sp(W)$. So the result holds.\\
2) For $[a_i, \epsilon_i]\in \widetilde{F^{\times 2}}$,
$$[a_1, \epsilon_1][a_2, \epsilon_2]=[a_1a_2, c_{\sqrt{\ }}(a_1,a_2)\epsilon_1\epsilon_2];$$
$$ [\sqrt{a}_1\epsilon_1, \epsilon_1] [\sqrt{a}_2\epsilon_2, \epsilon_2]=[\sqrt{a}_1\epsilon_1\sqrt{a}_2\epsilon_2, c'''(\lambda_{\sqrt{a}_1\epsilon_1},\lambda_{\sqrt{a}_2\epsilon_2}) \epsilon_1 \epsilon_2]$$
$$=[\sqrt{a}_1\epsilon_1\sqrt{a}_2\epsilon_2, c_{\sqrt{\ }}(a_1,a_2) \epsilon_1 \epsilon_2];$$
$$[\sqrt{a_1a_2}c_{\sqrt{\ }}(a_1,a_2)\epsilon_1\epsilon_2, c_{\sqrt{\ }}(a_1,a_2)\epsilon_1\epsilon_2]=[\sqrt{a}_1\sqrt{a}_2\epsilon_1\epsilon_2,  c_{\sqrt{\ }}(a_1,a_2)\epsilon_1\epsilon_2].$$
If $[1,1]=\nu([a, \epsilon])=[\sqrt{a}\epsilon, \epsilon]$, then $\sqrt{a}=1=\epsilon$. So $\nu$ is an injective map. For
$[g_1,\epsilon_1]\in \widetilde{\GSp}(W)$, $[\sqrt{a}\epsilon, \epsilon]\in \nu(\widetilde{F^{\times2}})$,
$$[g_1,\epsilon_1][\sqrt{a}\epsilon, \epsilon]=[g_1\sqrt{a}\epsilon, c'''(\lambda_{g_1},\lambda_{\sqrt{a}\epsilon})\epsilon_1\epsilon]$$
$$=[g_1\sqrt{a}\epsilon, c'''(\lambda_{g_1},a)\epsilon_1\epsilon];$$
$$[\sqrt{a}\epsilon, \epsilon][g_1,\epsilon_1]=[\sqrt{a}\epsilon g_1, c'''(a,\lambda_{g_1})\epsilon_1\epsilon].$$
Since $g_1\sqrt{a}\epsilon=\sqrt{a}\epsilon g_1$, $ c'''(\lambda_{g_1},a)=c'''(a,\lambda_{g_1})$, $[\sqrt{a}\epsilon, \epsilon]$ lies in the center of $\widetilde{\GSp}(W)$.
If $[\sqrt{a}\epsilon, \epsilon] \in \Sp(W)$,  then $\epsilon=1$, and $\sqrt{a}\epsilon\in \Sp(W)$. Hence $\sqrt{a}=\pm 1$, $a=1$, $\sqrt{a}=1$. \\
3) For $[g, \epsilon]\in \widetilde{\GSp}(W)$, the image of it in $F^{\times}/F^{\times 2}$ equals to $\dot{\lambda}_{g}$. So $\dot{\lambda}_{g}=\dot{1}$ iff $g\in F^{\times}\Sp(W)$. For any $th\in F^{\times} \Sp(W)$, $\epsilon\in \{\pm 1\}$, $[th, \epsilon]=[\pm 1 \cdot\sqrt{t^2} h,\epsilon]=[\sqrt{t^2} \epsilon, \epsilon][\pm \epsilon h, 1]$. Hence $\widetilde{F^{\times }\Sp(W)} \subseteq \widetilde{F^{\times 2}} \Sp(W)$. It is clear that the other inclusion holds.
\end{proof}
Let us define $$\widetilde{F^{\times}}_+=\nu(\widetilde{F^{\times 2}}) \subseteq  \widetilde{\GSp}(W) \textrm{ and }  \PGSp^{\pm}(W)=\widetilde{\GSp}(W)/\widetilde{F^{\times}}_+.$$
Then there exist the following exact sequences of groups:
$$1 \longrightarrow \widetilde{F^{\times 2}} \Sp(W) \longrightarrow \widetilde{\GSp}(W) \stackrel{\dot{\widetilde{\lambda}}}{\longrightarrow}\widetilde{F^{\times}} / \widetilde{F^{\times 2}} \simeq F^{\times}/F^{\times 2} \longrightarrow 1; $$
$$1 \longrightarrow  \Sp(W) \longrightarrow \PGSp^{\pm}(W) \stackrel{\dot{\widetilde{\lambda}}}{\longrightarrow}\widetilde{F^{\times}} / \widetilde{F^{\times 2}} \simeq F^{\times}/F^{\times 2} \longrightarrow 1.$$
Recall  the Weil representation $\pi_{\psi}$ of $ \Sp(W)$. Let us define:
$$\rho_{\psi}=\Ind_{\Sp(W)}^{\PGSp^{\pm}(W)} \pi_{\psi}.$$
By Clifford theory, the restriction of $\rho_{\psi}$ on $\Sp(W)$ consists of the two different Weil representations. Hence, it is independent of the choice of  $\psi$.

\subsubsection{$\wideparen{\widetilde{\GSp}}(W)$} Recall  that $c''(-,-)$ is a $2$-cocycle from $F^{\times}/F^{\times 2} \times F^{\times}/F^{\times 2} $ to $\widetilde{F^{\times2}}$.
Through the projection $\widetilde{\GSp}(W)\longrightarrow F^{\times}/F^{\times 2} $, we lift the $2$-cocycle $c''(-,-)$ to define over $\widetilde{\GSp}(W)$. For $\widetilde{g}_1=[g_1, \epsilon_1], \widetilde{g}_2=[g_2, \epsilon_2]\in \widetilde{\GSp}(W)$, we define
 $$c(\widetilde{g}_1, \widetilde{g}_2)= \sqrt{ c''(\dot{\lambda}_{g_1}, \dot{\lambda}_{g_2})}.$$

From the definition, we know that  $c(\widetilde{g}_1, \widetilde{g}_2)=1=c(\widetilde{g}_2, \widetilde{g}_1)$, for $\widetilde{g}_1\in \widetilde{F^{\times 2
}} \Sp(W)$. Associated to $c(-,-)$, there is  a central extension of $\widetilde{\GSp}(W)$ by $F^{\times}$:
$$1 \longrightarrow F^{\times} \longrightarrow \wideparen{\widetilde{\GSp}}(W)  \longrightarrow  \widetilde{\GSp}(W) \longrightarrow 1.$$
Then the above extension is split over $\widetilde{F^{\times 2}}\Sp(W)$. The group   $\wideparen{\widetilde{\GSp}}(W)$ consists of the elements $[\widetilde{g}, k]$, $\widetilde{g}\in \widetilde{\GSp}(W), k\in F^{\times}$, and  the law is  given as follows:
for $\widetilde{g}$, $\widetilde{g}'\in \widetilde{\GSp}(W)$, $k,k'\in F^{\times}$,
  $$[\widetilde{g}, k] \ast [\widetilde{g}',k']=[\widetilde{g}\widetilde{g}' , c(\widetilde{g},\widetilde{g}')kk'].$$
Note that $[F^{\times}\times \widetilde{F^{\times 2}}\Sp(W)] \unrhd \wideparen{\widetilde{\GSp}}(W)$, $\wideparen{\widetilde{\GSp}}(W)/[F^{\times}\times \widetilde{F^{\times 2}}\Sp(W)] \simeq F^{\times}/F^{\times 2}$. Moreover, $F^{\times}\times \widetilde{F^{\times 2}} $ lies in the center of $\wideparen{\widetilde{\GSp}}(W) $.

 For any element $\widetilde{g}=[g, \epsilon]\in \widetilde{\GSp}(W)$, let us write $\lambda_g=a_{g}^2\kappa(\dot{\lambda}_g)$, for some $a_{g} \in F^{\times }$.  Then under the map $\widetilde{\GSp}(W) \stackrel{\widetilde{\lambda}}{\longrightarrow} \widetilde{F^{\times}}$, the image of $\widetilde{g}$ is just $[\lambda_g, \epsilon]$. By the exact sequence(\ref{fffww}), let us write $[\lambda_g, \epsilon]$ in the other form: $([a_{g}^2, \epsilon], \dot{\lambda}_g)$ with $[a_{g}^2, \epsilon] \in \widetilde{F^{\times 2}}$, $\dot{\lambda}_g\in F^{\times}/F^{\times 2}$. Let us define a map:
 $$\tau: \widetilde{\GSp}(W) \longrightarrow F^{\times}; \widetilde{g}\longmapsto (\sqrt{[a_{g}^2, \epsilon]})^{-1}.$$
\begin{lemma}
Let  $ \widetilde{g}_1=[g_1, \epsilon_1]$,$ \widetilde{g}_2=[g_2, \epsilon_2]\in \widetilde{\GSp}(W)$.
\begin{itemize}
\item[(1)] $\tau(\widetilde{g}_1)\tau(\widetilde{g}_2)=\tau(\widetilde{g}_1\widetilde{g}_2) c(\widetilde{g}_1, \widetilde{g}_2)$.
\item[(2)]  $\lambda_{\tau(\widetilde{g}_1)g_1} \lambda_{\tau(\widetilde{g}_2)g_2} =\lambda_{\tau(\widetilde{g}_1\widetilde{g}_2)g_1g_2} c(\widetilde{g}_1, \widetilde{g}_2)^2$.
\end{itemize}
\end{lemma}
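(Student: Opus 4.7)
My strategy for part (1) is to expand both sides using the explicit formulas developed in the preceding subsection and watch the signs cancel. First I will write $\lambda_{g_i}=a_{g_i}^2\kappa(\dot{\lambda}_{g_i})$, so that $\tau(\widetilde{g}_i)=(\sqrt{a_{g_i}^2}\,\epsilon_i)^{-1}$. Next, using the product in $\widetilde{\GSp}(W)$, $\widetilde{g}_1\widetilde{g}_2=[g_1g_2,\,c'''(\lambda_{g_1},\lambda_{g_2})\epsilon_1\epsilon_2]$, together with the identity $\kappa(\dot{\lambda}_{g_1})\kappa(\dot{\lambda}_{g_2})=c'(\dot{\lambda}_{g_1},\dot{\lambda}_{g_2})\kappa(\dot{\lambda}_{g_1g_2})$, I read off $a_{g_1g_2}^2=a_{g_1}^2a_{g_2}^2\,c'(\dot{\lambda}_{g_1},\dot{\lambda}_{g_2})$. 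Finally, since $c''$ is built from $c'$ via $\iota$ and $\sqrt{[a,1]}=\sqrt{a}$, I obtain $c(\widetilde{g}_1,\widetilde{g}_2)=\sqrt{c'(\dot{\lambda}_{g_1},\dot{\lambda}_{g_2})}$.

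The key step is then to apply the defining identity $\sqrt{x}\sqrt{y}=c_{\sqrt{\ }}(x,y)\sqrt{xy}$ twice to break
\[
\sqrt{a_{g_1g_2}^2}=\sqrt{a_{g_1}^2a_{g_2}^2\,c'(\dot{\lambda}_{g_1},\dot{\lambda}_{g_2})}=c_{\sqrt{\ }}(a_{g_1}^2a_{g_2}^2,c'(\dot{\lambda}_{g_1},\dot{\lambda}_{g_2}))\,c_{\sqrt{\ }}(a_{g_1}^2,a_{g_2}^2)\,\sqrt{a_{g_1}^2}\sqrt{a_{g_2}^2}\sqrt{c'(\dot{\lambda}_{g_1},\dot{\lambda}_{g_2})}.
\]
The product of the two $c_{\sqrt{\ }}$-signs appearing here is precisely the explicit formula for $c'''(\lambda_{g_1},\lambda_{g_2})$ derived at the end of the previous subsection. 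Substituting this into the expression for $\tau(\widetilde{g}_1\widetilde{g}_2)$ and inverting causes the factor $c'''$ to cancel against the copy coming from the product rule (using $c'''\in\{\pm 1\}$, so $(c''')^2=1$). Multiplying by $c(\widetilde{g}_1,\widetilde{g}_2)=\sqrt{c'(\dot{\lambda}_{g_1},\dot{\lambda}_{g_2})}$ then kills the remaining square-root factor, leaving $(\sqrt{a_{g_1}^2}\epsilon_1)^{-1}(\sqrt{a_{g_2}^2}\epsilon_2)^{-1}=\tau(\widetilde{g}_1)\tau(\widetilde{g}_2)$, which is (1).

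Part (2) will follow formally from (1). Because $\lambda$ is multiplicative on $\GSp(W)$ and satisfies $\lambda_{tg}=t^2\lambda_g$ for $t\in F^{\times}$, the left-hand side equals $\tau(\widetilde{g}_1)^2\tau(\widetilde{g}_2)^2\lambda_{g_1}\lambda_{g_2}$ while the right-hand side equals $\tau(\widetilde{g}_1\widetilde{g}_2)^2\,c(\widetilde{g}_1,\widetilde{g}_2)^2\,\lambda_{g_1g_2}$. Since $\lambda_{g_1g_2}=\lambda_{g_1}\lambda_{g_2}$, the claim reduces to squaring the identity obtained in (1).

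The only real obstacle is the combinatorial bookkeeping in (1): one must recognise that the particular pair of $c_{\sqrt{\ }}$-signs produced by distributing $\sqrt{\,\cdot\,}$ over the triple product $a_{g_1}^2\cdot a_{g_2}^2\cdot c'(\dot{\lambda}_{g_1},\dot{\lambda}_{g_2})$ matches exactly the explicit formula for $c'''(\lambda_{g_1},\lambda_{g_2})$, so that the two occurrences of $c'''$ collapse together. Once this identification is in place, all remaining manipulations happen inside $\{\pm 1\}\subseteq F^{\times}$ and are purely formal.
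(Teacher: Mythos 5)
Your proof is correct and arrives at the same identity, but it takes a slightly more pedestrian route than the paper. The paper observes that $\sqrt{\ }:\widetilde{F^{\times 2}}\to F^{\times}$ is a group isomorphism and that $\widetilde{\lambda}$ is a homomorphism, so it immediately splits $\tau(\widetilde{g}_1\widetilde{g}_2)=\sqrt{[a_{g_1}^2,\epsilon_1][a_{g_2}^2,\epsilon_2]c''(\dot{\lambda}_{g_1},\dot{\lambda}_{g_2})}^{-1}$ multiplicatively over the three factors inside $\widetilde{F^{\times 2}}$, giving $\tau(\widetilde{g}_1)\tau(\widetilde{g}_2)c(\widetilde{g}_1,\widetilde{g}_2)^{-1}$ in one line; all the sign bookkeeping is absorbed into the already-verified group structure of $\widetilde{F^{\times 2}}$. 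You instead compute directly in $F^{\times}$, unfolding $\sqrt{a_{g_1g_2}^2}$ with two applications of the cocycle identity and recognising the residual product $c_{\sqrt{\ }}(a_{g_1}^2,a_{g_2}^2)c_{\sqrt{\ }}(a_{g_1}^2a_{g_2}^2,c')$ as exactly the explicit expression for $c'''(\lambda_{g_1},\lambda_{g_2})$, which then cancels against the $c'''$ in the $\widetilde{\GSp}(W)$-multiplication because $c'''$ is $\{\pm 1\}$-valued. Both arguments are sound; yours is more self-contained at the price of re-deriving what the paper has already packaged into the isomorphism $\widetilde{F^{\times 2}}\simeq F^{\times}$. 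Your treatment of part (2) is also fine and merely spells out the squaring step the paper leaves implicit.
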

\begin{proof}
1) For $\widetilde{\lambda}_{\widetilde{g}_i}=([a_{g_i}^2, \epsilon_i], \dot{\lambda}_{g_i})\in \widetilde{F^{\times}} $, $i=1,2$, we have
$$([a_{g_1}^2, \epsilon_1], \dot{\lambda}_{g_1})([a_{g_2}^2, \epsilon_2], \dot{\lambda}_{g_2})=([a_{g_1}^2, \epsilon_1][a_{g_2}^2, \epsilon_2] c''(\dot{\lambda}_{g_1},\dot{\lambda}_{g_2}), \dot{\lambda}_{g_1}\dot{\lambda}_{g_2}).$$
So $\tau(\widetilde{g}_i)=\sqrt{[a_{g_i}^2, \epsilon_i]}^{-1}$,
$$\tau(\widetilde{g}_1\widetilde{g}_2)=\sqrt{[a_{g_1}^2, \epsilon_1][a_{g_2}^2, \epsilon_2] c''(\dot{\lambda}_{g_1},\dot{\lambda}_{g_2})}^{-1}=\sqrt{[a_{g_1}^2, \epsilon_1]}^{-1}\sqrt{[a_{g_2}^2, \epsilon_2]}^{-1}\sqrt{c''(\dot{\lambda}_{g_1},\dot{\lambda}_{g_2})}^{-1}$$
$$=\tau(\widetilde{g}_1)\tau(\widetilde{g}_2) c(\widetilde{g}_1, \widetilde{g}_2)^{-1}.$$
2) It is a consequence of (1).
\end{proof}
For our purpose, let us extend the action on $\Ha(W)$ from $\Sp(W)$  to $\wideparen{\widetilde{\GSp}}(W) $. Recall that there exists a  group homomorphism: $\widetilde{\GSp}(W) \longrightarrow \GSp(W)$. Through this map, we have a canonical action of $\widetilde{\GSp}(W)$ on $W$ as well as  $\Ha(W)$.

 Let us consider a twisted right action of $\wideparen{\widetilde{\GSp}}(W) $ on $\Ha(W)$ in the following way:
\begin{equation}\label{GSPH}
\alpha:\Ha(W) \times \wideparen{\widetilde{\GSp}}(W) \longrightarrow \Ha(W); ((v,t), [\widetilde{g},k]) \longmapsto  (\tau(\widetilde{g})v gk, \lambda_{\tau(\widetilde{g})g} \lambda_{k}t),
\end{equation}
for $\widetilde{g}=[g,\epsilon]\in \widetilde{\GSp}(W)$ with $g\in \GSp(W)$, $\epsilon \in \{\pm 1\}$, $k\in F^{\times}$.
\begin{lemma}
The above action $\alpha$ is well-defined.
\end{lemma}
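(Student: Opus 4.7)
The plan is to verify the two defining properties of a right action: first, that for each fixed $[\widetilde{g}, k]\in\wideparen{\widetilde{\GSp}}(W)$, the map $\alpha(-, [\widetilde{g},k])\colon \Ha(W)\to \Ha(W)$ is a group automorphism, and second, that $\alpha$ is compatible with the twisted multiplication $\ast$ on $\wideparen{\widetilde{\GSp}}(W)$.

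For the first point, I observe that $\tau(\widetilde{g})$ and $k$ both lie in $F^{\times}$ and act on $W$ as scalar matrices, so they are central in $\GL(W)$ and commute with the $\GSp(W)$-action; moreover, since $\langle av, aw\rangle = a^2\langle v, w\rangle$ for any scalar $a$, scalars have similitude factor the square. Setting $h=\tau(\widetilde{g})\, g\, k\in\GSp(W)$, multiplicativity of $\lambda$ on commuting factors gives $\lambda_h=\tau(\widetilde{g})^2\lambda_g k^2=\lambda_{\tau(\widetilde{g})g}\,\lambda_k$. Thus the map $(v,t)\mapsto(vh,\lambda_h t)$ is a standard twisted right action of $\GSp(W)$ on $\Ha(W)$, and it preserves the Heisenberg law because
\[
\tfrac{\langle v_1 h, v_2 h\rangle}{2} = \lambda_h\,\tfrac{\langle v_1, v_2\rangle}{2}.
\]

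For the second point, take $[\widetilde{g}_i,k_i]\in\wideparen{\widetilde{\GSp}}(W)$ for $i=1,2$ and compare the two sides of
\[
\alpha\bigl(\alpha((v,t), [\widetilde{g}_1,k_1]), [\widetilde{g}_2,k_2]\bigr) \;=\; \alpha\bigl((v,t),\, [\widetilde{g}_1,k_1]\ast[\widetilde{g}_2,k_2]\bigr).
\]
Iterating the formula for $\alpha$, and using that the scalars $\tau(\widetilde{g}_i)$ and $k_i$ commute with everything, the left-hand side has first coordinate $\tau(\widetilde{g}_1)\tau(\widetilde{g}_2)\,v\,g_1g_2\,k_1k_2$ and second coordinate $\lambda_{\tau(\widetilde{g}_1)g_1}\lambda_{\tau(\widetilde{g}_2)g_2}\lambda_{k_1}\lambda_{k_2}\,t$. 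The right-hand side, by the law $\ast$ in $\wideparen{\widetilde{\GSp}}(W)$, equals $\alpha\bigl((v,t),\,[\widetilde{g}_1\widetilde{g}_2,\, c(\widetilde{g}_1,\widetilde{g}_2)k_1k_2]\bigr)$, yielding first coordinate $\tau(\widetilde{g}_1\widetilde{g}_2)\,v\,g_1g_2\,c(\widetilde{g}_1,\widetilde{g}_2)k_1k_2$ and second coordinate $\lambda_{\tau(\widetilde{g}_1\widetilde{g}_2)g_1g_2}\,c(\widetilde{g}_1,\widetilde{g}_2)^{2}\lambda_{k_1}\lambda_{k_2}\,t$ (here I use $\lambda_{c\cdot k_1k_2}=c^{2}\lambda_{k_1}\lambda_{k_2}$ for scalars). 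Equality in the two coordinates is then exactly parts (1) and (2) of the preceding lemma.

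The only real obstacle is the bookkeeping of scalars: one must be careful that $\tau(\widetilde{g})$ and $k$ are simultaneously regarded as elements of $F^\times\subset\GL(W)$ (so they act on $W$) and as arguments of $\lambda$ (contributing their square); but both roles are compatible since these elements are central, and the discrepancy produced by the non-multiplicativity of $\tau$ is precisely the cocycle $c(\widetilde{g}_1,\widetilde{g}_2)$ that appears in the group law of $\wideparen{\widetilde{\GSp}}(W)$. The identities of the previous lemma were crafted to cancel these factors exactly.
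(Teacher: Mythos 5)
Your proof is correct and takes essentially the same route as the paper's: you check that each $\alpha(-,[\widetilde{g},k])$ is a group endomorphism of $\Ha(W)$ (via $\langle v_1h,v_2h\rangle=\lambda_h\langle v_1,v_2\rangle$ with $h=\tau(\widetilde{g})gk$), and you reduce compatibility with the $\ast$-law on $\wideparen{\widetilde{\GSp}}(W)$ to parts (1) and (2) of the preceding lemma on $\tau$ and $c$. The only cosmetic difference is that you carry the $F^{\times}$-component $k$ along with $\widetilde{g}$ in a single computation, whereas the paper treats $[1,k]$ (using its centrality) and $[\widetilde{g},1]$ separately; both amount to the same calculation.
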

\begin{proof}
 For $ [\widetilde{g}, k] \in \wideparen{\widetilde{\GSp}}(W) $, $(v, t), (v', t') \in   \Ha(W)$, it can be checked that  $[(v, t)+ (v', t')]\alpha([\widetilde{g}, k])=(v, t)\alpha([\widetilde{g},k])+  (v', t')\alpha([\widetilde{g},k])$. For $[1_{\widetilde{\GSp}(W)},1_{F^{\times}}]\in \wideparen{\widetilde{\GSp}}(W)$, it is clear that $(v, t) \alpha([1_{\widetilde{\GSp}(W)},1_{F^{\times}}])=(v,t)$. Note that $1_{\widetilde{\GSp}(W)} \times F^{\times}$ lies in the center of $\wideparen{\widetilde{\GSp}}(W) $. Moreover, $\alpha|_{1_{\widetilde{\GSp}(W)} \times F^{\times}}$ is well-defined.
For $\widetilde{g}_1=[g_1, \epsilon_1], \widetilde{g}_2=[g_2,\epsilon_2] \in \widetilde{\GSp}(W)$,
 $$(v,t) \alpha([\widetilde{g}_1,1])\alpha([\widetilde{g}_2,1])=(\tau(\widetilde{g}_1)v g_1, \lambda_{\tau(\widetilde{g}_1)g_1} t)\alpha(\widetilde{g}_2)=(\tau(\widetilde{g}_1)\tau(\widetilde{g}_2)v g_1g_2, \lambda_{\tau(\widetilde{g}_1)g_1}\lambda_{\tau(\widetilde{g}_2)g_2} t),$$
 $$(v,t) \alpha([\widetilde{g}_1,1]\ast[\widetilde{g}_2,1])=(v,t) \alpha([\widetilde{g}_1\widetilde{g}_2,c(\widetilde{g}_1,\widetilde{g}_2)])=(\tau(\widetilde{g}_1\widetilde{g}_2)v g_1g_2c(\widetilde{g}_1,\widetilde{g}_2), \lambda_{\tau(\widetilde{g}_1\widetilde{g}_2)g_1g_2}c(\widetilde{g}_1,\widetilde{g}_2)^{2}t)$$
 $$=(v,t) \alpha([\widetilde{g}_1,1])\alpha([\widetilde{g}_2,1]).$$
\end{proof}
By Lemma \ref{ste}, we can view $\Sp(W)$ as a subgroup of $\widetilde{\GSp}(W)$, and then as a subgroup of $\wideparen{\widetilde{\GSp}}(W)$. The restriction of the action $\alpha$ on $\Sp(W)$ is the usual action.

Moreover, for  $\widetilde{g}=[g, \epsilon]\in \widetilde{F^{\times2}} \Sp(W)$, $\lambda_{g}\in F^{\times 2}$, $\lambda_{\tau(\widetilde{g}) g}=1$.   Hence on $  \widetilde{F^{\times2}} \Sp(W)$, the  action $\alpha$  factors through $\Sp(W)$. Let  $\Z_2=\{ \pm 1\}$.
Recall  the Weil representation $\pi_{\psi}$ of $ \Sp(W)\ltimes \Ha(W)$. Then $\pi_{\psi}$ can be viewed as a representation of $[\Z_2\times \widetilde{F^{\times2}} \Sp(W)]\ltimes_{\alpha} \Ha(W)$.
Let us consider $\rho'_{\psi}=\Ind_{[\Z_2\times \widetilde{F^{\times2}} \Sp(W)]\ltimes_{\alpha} \Ha(W)  }^{\wideparen{\widetilde{\GSp}}(W) \ltimes_{\alpha}\Ha(W)} \pi_{\psi}$.
\begin{lemma}
\begin{itemize}
\item[(1)] $\rho'_{\psi}|_{\Sp(W) \ltimes \Ha(W)}$ contains $\pi_{\psi^s}$, for any $s\in F^{\times}$.
\item[(2)]  $\rho'_{\psi}$ is independent of the choice of $\psi$, i.e. $\rho'_{\psi}\simeq \rho_{\psi^s}$, for any $s\in F^{\times}$.
\end{itemize}
\end{lemma}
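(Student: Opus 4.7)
The approach is to apply Mackey/Clifford theory and reduce both parts to understanding how conjugation in $G := \wideparen{\widetilde{\GSp}}(W)\ltimes_{\alpha}\Ha(W)$ twists the central character of $\pi_{\psi}$ along the centre of $\Ha(W)$.

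First, I would verify that $\Sp(W)\ltimes\Ha(W)$ is normal in $G$: by Lemma \ref{ste}(1), $\Sp(W)=\ker\widetilde{\lambda}$ is normal in $\widetilde{\GSp}(W)$ and hence in $\wideparen{\widetilde{\GSp}}(W)$, while $\Ha(W)$ is the normal factor of the semi-direct product. Set $H := [\Z_{2}\times\widetilde{F^{\times 2}}\Sp(W)]\ltimes_{\alpha}\Ha(W)$. Mackey's formula, in the easy shape valid when the target of restriction is a normal subgroup contained in the inducing subgroup, then gives
$$\rho'_{\psi}\big|_{\Sp(W)\ltimes\Ha(W)}\;\simeq\;\bigoplus_{x\in H\backslash G}\pi_{\psi}^{x}\big|_{\Sp(W)\ltimes\Ha(W)}.$$
A short count using $\wideparen{\widetilde{\GSp}}(W)/[F^{\times}\times\widetilde{F^{\times 2}}\Sp(W)]\simeq F^{\times}/F^{\times 2}$ together with $[F^{\times}:\Z_{2}]=(q-1)/2$ yields $[G:H]=q-1=|F^{\times}|$.

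Next I would compute the central character of each conjugate. For $x = [\widetilde{g},k]$ with $\widetilde{g} = [g,\epsilon]$, the twisted action $(\ref{GSPH})$ sends $(0,t)$ to $(0,\lambda_{\tau(\widetilde{g})g}\lambda_{k}t)$. Unwinding the definition of $\tau$ and the isomorphism $\sqrt{\ }:\widetilde{F^{\times 2}}\simeq F^{\times}$ gives $\tau(\widetilde{g})^{2}\lambda_{g}=\kappa(\dot{\lambda}_{g})$, so this factor equals $\kappa(\dot{\lambda}_{g})k^{2}$. Consequently the central character of $\pi_{\psi}^{x}$ on $\{0\}\times F\subset\Ha(W)$ is $\psi^{s}$ with $s=\kappa(\dot{\lambda}_{g})k^{2}$, and by Stone--von Neumann $\pi_{\psi}^{x}|_{\Sp(W)\ltimes\Ha(W)}\simeq\pi_{\psi^{s}}$. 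Using the coset decomposition $G/H \simeq F^{\times}/F^{\times 2}\times F^{\times}/\{\pm 1\}$, as $x$ ranges over representatives the parameter $s=\kappa(\dot{\lambda}_{g})k^{2}$ runs bijectively through $F^{\times 2}\sqcup\xi F^{\times 2}=F^{\times}$. This proves part (1), and in fact with multiplicity one.

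For part (2), I would observe that $\alpha|_{H}$ factors through $\Sp(W)$ (since $\kappa(\dot{\lambda}_{g})=1$ and $k^{2}=1$ for $\widetilde{g}\in\widetilde{F^{\times 2}}\Sp(W)$ and $k\in\Z_{2}$), so the $H$-extension of $\pi_{\psi}$ is just pullback along the projection $H\to\Sp(W)\ltimes\Ha(W)$, and likewise for $\pi_{\psi^{s}}$. Given $s\in F^{\times}$, choose $x=[\widetilde{g},k]\in G$ with $\kappa(\dot{\lambda}_{g})k^{2}=s$ as in part (1); then $\pi_{\psi}^{x}|_{\Sp(W)\ltimes\Ha(W)}\simeq\pi_{\psi^{s}}$, and pulling back along the same projection gives $\pi_{\psi}^{x}\simeq\pi_{\psi^{s}}$ as $H$-modules. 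Therefore
$$\rho'_{\psi}=\Ind_{H}^{G}\pi_{\psi}\simeq\Ind_{H}^{G}\pi_{\psi}^{x}\simeq\Ind_{H}^{G}\pi_{\psi^{s}}=\rho'_{\psi^{s}}.$$
The main technical hurdle is the explicit identity $\tau(\widetilde{g})^{2}\lambda_{g}=\kappa(\dot{\lambda}_{g})$ and the accompanying coset count; both are short but demand careful bookkeeping of the isomorphism $\sqrt{\ }:\widetilde{F^{\times 2}}\simeq F^{\times}$, the section $\kappa$, and the cocycle $c'''$ introduced in the preceding subsection.
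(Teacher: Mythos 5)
Your proposal is correct, and it rests on the same underlying mechanism as the paper's proof: conjugation by elements of $\wideparen{\widetilde{\GSp}}(W)$ twists the central character of the Heisenberg representation. The difference is one of packaging. The paper proves (1) by exhibiting, for each $s=a_s^2\kappa(\dot s)$, the single witness $\wideparen{\widetilde{g}}=([g_s,1],\tau([g_s,1])^{-1})$ with $g_s=\begin{pmatrix}1&0\\0&s\end{pmatrix}$ and checking directly that $(v,t)\alpha(\wideparen{\widetilde{g}})=(vg_s,\lambda_{g_s}t)=(vg_s,st)$, then disposes of (2) with a one-line appeal to Clifford theory. You instead run the full Mackey restriction formula over $H\backslash G$ for the normal subgroup $\Sp(W)\ltimes\Ha(W)\lhd G$, compute the twist of the central character for a general representative $x=[[g,\epsilon],k]$ to be $\kappa(\dot\lambda_g)k^2$, and observe that this parameter traverses $F^\times$ bijectively as $x$ runs over the cosets. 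Your identity $\tau(\widetilde{g})^2\lambda_g=\kappa(\dot\lambda_g)$ is correct (since $\tau(\widetilde{g})^{-2}=a_g^2$ and $\lambda_g=a_g^2\kappa(\dot\lambda_g)$) and is precisely what makes the paper's chosen witness work. Your argument therefore buys a bit more: it records the multiplicity-one statement, which the paper leaves implicit, and it makes the deduction of (2) transparent by invoking $\Ind_H^G\pi_\psi\simeq\Ind_H^G\pi_\psi^x$ for $H\lhd G$ rather than just citing Clifford theory. The coset count $[G:H]=q-1$ and the normality of both $H$ and $\Sp(W)\ltimes\Ha(W)$ in $G$ all check out, so the approach is sound.
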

\begin{proof}
1) Let us write $s=a_s^2 \kappa(\dot{s})$. Let $g_s=\begin{pmatrix}
1 & 0\\
0 & s\end{pmatrix} \in \GSp(W)$,  $\widetilde{g}=[g_s, 1]\in \widetilde{\GSp}(W)$, $k=\tau(\widetilde{g})^{-1}$, $\wideparen{\widetilde{g}}=([g_s, 1], k)\in \wideparen{\widetilde{\GSp}}(W)$.  Then: $(v,t)\alpha(\wideparen{\widetilde{g}})=(vg_s, \lambda_{g_s} t)$. Hence  the results holds.\\
2) It is a consequence of (1) by Clifford theory.
\end{proof}
\section{Even case}\label{even}
 In this section, we will consider   a finite field of  characteristic  $2$.
\subsection{Weil representation over a certain field}
Following \cite{GeLy}, \cite{GuHa}, let $\Q_2$ be the $2$-adic complete field from the rational number field. Let $K/\Q_2$ be an unramified field extension of degree $d$. Let $\mathfrak{O}_K$ or $\mathfrak{O}$ denote  the ring of integers of $K$. Then $2$ is one uniformizer of $K$. Let  $\mathfrak{m}^n=2^n \mathfrak{O}$. Let   $F=\mathfrak{O}/\mathfrak{m}$ be the residue field of order $2^{d}$.
 Let $(\mathscr{W}, \langle, \rangle_{\mathscr{W}})$ be a vector space over $K$ of dimension $2m$, with a symplectic basis $\{e_1, \cdots, e_m; e_1^{\ast}, \cdots, e_m^{\ast}\}$ so that $\langle e_i, e_j\rangle=0=\langle e_i^{\ast}, e_j^{\ast}\rangle$, $\langle e_i,e_j^{\ast}\rangle=\delta_{ij}$. Let $\mathcal{X}=\Span_{K}\{e_1, \cdots, e_m\}$, $\mathcal{X}^{\ast}=\Span_K\{e_1^{\ast}, \cdots, e_m^{\ast}\}$. For $w=x+x^{\ast}$, $w'=y+y^{\ast}\in \mathscr{W} $,  with $x, y\in \mathcal{X}$, $x^{\ast}, y^{\ast}\in \mathcal{X}^{\ast}$,  let us define
   $$B(w, w')=\langle x, y^{\ast}\rangle_{\mathscr{W}}.$$ Then $\langle w, w'\rangle_{\mathscr{W}}=B(w, w')-B(w', w)$. Let $H_{B}(\mathscr{W})=\mathscr{W}\times K$ denote   the corresponding Heisenberg group, defined as follows:
 $$(w, t)+(w',t')=(w+w', t+t'+B(w,w')).$$

   For $g\in \Sp(\mathscr{W})$, following \cite{Ra}, \cite{GuHa},  \cite{Ta}, we let $\Sigma_g$  be the set  of  continuous   functions $q$ from $\mathscr{W}$ to $K$, such that
 \begin{equation}\label{equiv}
 q(w+w')-q(w)-q (w')=B(wg, w'g)-B(w, w'), \quad \quad w, w'\in \mathscr{W}.
 \end{equation}

   Following \cite{Ta}, \cite{We}, let $Ps(\mathscr{W})=\{(g, q)\mid g\in \Sp(\mathscr{W}), q\in \Sigma_g\}$, which is called the linear pseudosymplectic group with respect to $\mathcal{X}, \mathcal{X}^{\ast}$. The group law is given as follows: $( g, q) ( g', q')=(gg', q'')$, where $q^{''}(w)=q(w)+q'(wg)$, for $w\in \mathscr{W}$. The group  $\Ps(\mathscr{W})$  can act on $H_B(\mathscr{W})$ as follows:
   $(w, t)\cdot(g,q) = (wg, t + q(w))$.
   \begin{lemma}\label{split}
   There exists a split exact sequence: $1\longrightarrow \mathscr{W}^{\vee} \longrightarrow \Ps(\mathscr{W}) \longrightarrow \Sp(\mathscr{W}) \longrightarrow 1$, where $\mathscr{W}^{\vee}=\Hom_{group}(\mathscr{W}, K)$.
   \end{lemma}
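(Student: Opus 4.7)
The plan is to read off the kernel of the projection $\pi\colon(g,q)\mapsto g$ directly from the defining identity $(\ref{equiv})$ and then produce an explicit section via a canonical quadratic form built from $B$.

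First, $\pi$ is a group homomorphism by construction of $\Ps(\mathscr{W})$, and an element $(g,q)\in\ker\pi$ forces $g=1$, in which case $(\ref{equiv})$ degenerates to the additivity condition $q(w+w')=q(w)+q(w')$. Thus $\ker\pi$ is precisely the additive group of continuous homomorphisms $\mathscr{W}\to K$, embedded into $\Ps(\mathscr{W})$ via $q\mapsto(1,q)$; the composition law restricts to addition of homomorphisms there, so we obtain exactness at $\mathscr{W}^{\vee}$ and at $\Ps(\mathscr{W})$.

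For the splitting I will set
$$q_g(w):=\tfrac{1}{2}\bigl(B(wg,wg)-B(w,w)\bigr),\qquad g\in\Sp(\mathscr{W}).$$
This is a legitimate definition since $K$ has characteristic zero, and $q_g$ is a quadratic polynomial in $w$, hence continuous. To verify $q_g\in\Sigma_g$, expanding by bilinearity of $B$ gives
$$q_g(w+w')-q_g(w)-q_g(w')=\tfrac{1}{2}\bigl[\bigl(B(wg,w'g)-B(w,w')\bigr)+\bigl(B(w'g,wg)-B(w',w)\bigr)\bigr].$$
The two summands are in fact equal: their difference is $\langle wg,w'g\rangle_{\mathscr{W}}-\langle w,w'\rangle_{\mathscr{W}}$, which vanishes because $g\in\Sp(\mathscr{W})$. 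Hence the right-hand side collapses to $B(wg,w'g)-B(w,w')$, matching $(\ref{equiv})$.

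It remains to check that $s\colon g\mapsto(g,q_g)$ is a group homomorphism, which via the composition law in $\Ps(\mathscr{W})$ amounts to $q_g(w)+q_{g'}(wg)=q_{gg'}(w)$. With the chosen formula this is a telescoping identity, the middle term $\tfrac{1}{2}B(wg,wg)$ cancelling out. Surjectivity of $\pi$ then follows since $s$ is a set-theoretic section, completing the proof of exactness and exhibiting the splitting simultaneously. The only step that genuinely uses the symplectic hypothesis is the symmetry observation placing $q_g$ in $\Sigma_g$; I expect this to be the main, though modest, obstacle, since everything else reduces to formal bilinear algebra with $B$.
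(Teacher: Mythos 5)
Your proposal is correct and follows essentially the same construction as the paper and the references it cites (Ranga Rao, Takeda): the explicit section $g\mapsto(g,q_g)$ with $q_g(w)=\tfrac{1}{2}\bigl(B(wg,wg)-B(w,w)\bigr)$ is precisely the map the paper writes in coordinates as $(\ref{B})$ and later re-derives in exactly your closed form. Your verification that $q_g\in\Sigma_g$ (using $\langle wg,w'g\rangle=\langle w,w'\rangle$ to symmetrize) and the telescoping cocycle check are the standard arguments, so this is a sound, self-contained rendering of the cited proof.
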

   \begin{proof}
   See \cite[p.351, Lmm.3.1]{Ra}, or \cite[Lmm.2.3]{Ta}.
   \end{proof}
   Moreover, by \cite[p.351, Lmm.3.1]{Ra}, \cite[Lmm.2.3]{Ta}, \cite[Sect.4]{We}, there exists an explicit group homomorphism
   $\alpha: \Sp(\mathscr{W}) \longrightarrow \Ps(\mathscr{W}); g\longmapsto (g, q_g)$, where
   \begin{equation}\label{B}
   \begin{split}
   q_g(x+x^{\ast})&=\tfrac{1}{2}\langle
xa, xb\rangle+\tfrac{1}{2}\langle x^{\ast}c, x^{\ast}d\rangle+\langle x^{\ast}c, xb\rangle\\
&=\tfrac{1}{2}B(
xa, xb)+\tfrac{1}{2}B( x^{\ast}c, x^{\ast}d)+B(x^{\ast}c, xb),
\end{split}
\end{equation} for $g=\begin{pmatrix} a& b\\c & d\end{pmatrix}\in \Sp(\mathscr{W})$.

 Let $\iota: \mathscr{W}^{\vee} \longrightarrow \Ps(\mathscr{W}); q_1 \longmapsto (1, q_1)$. Through $\iota$, $\alpha$, we can treat  $ \mathscr{W}^{\vee}$ and $\Sp(\mathscr{W})$ as subgroups of $\Ps(\mathscr{W})$. Then $\Ps(\mathscr{W}) \simeq \mathscr{W}^{\vee}\rtimes \Sp(\mathscr{W}) $.
For $g\in \Sp(\mathscr{W})$, $q_1\in \mathscr{W}^{\vee}$,  $( q_1, g)\in  \mathscr{W}^{\vee}\rtimes \Sp(\mathscr{W})$, and $( q_1, g)=(q_1,1) (0,g)=(0, g)({}^{g^{-1}}q_1, 1)$.

Let us extend the above result to the symplectic similitude  group. For $g\in \GSp(\mathscr{W})$,  we also let $\Sigma_g$  be the set  of  continuous  functions $q$ from $\mathscr{W}$ to $K$ such that
 \begin{equation}\label{equiv}
 q(w+w')-q(w')-q (w)=B(wg, w'g)-\lambda_g B(w, w'), \quad  w, w'\in \mathscr{W},
 \end{equation}
 where $\lambda: \GSp(\mathscr{W}) \longrightarrow K$,  the similitude character. Let $\GPs(\mathscr{W})=\{(g, q)\mid g\in \GSp(\mathscr{W}), q\in \Sigma_g\}$. The group law is given as follows: $( g, q) ( g', q')=(gg', q^{''})$, where $q^{''}(w)=\lambda_{g'}q(w)+q'(wg)$, for $w\in \mathscr{W}$.  The group  $\GPs(\mathscr{W})$  can act on $H_B(\mathscr{W})$ as follows: $(w, t)\cdot(g,q) = (wg, \lambda_{g}t + q(w))$.
    \begin{lemma}\label{split}
   There exists a split exact sequence: $1\longrightarrow \mathscr{W}^{\vee} \longrightarrow \GPs(\mathscr{W}) \longrightarrow \GSp(\mathscr{W}) \longrightarrow 1$, where $\mathscr{W}^{\vee}=\Hom_{group}(\mathscr{W}, K)$.
   \end{lemma}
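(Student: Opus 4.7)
The plan is to adapt the proof of the preceding Lemma \ref{split} (for $\Sp(\mathscr{W})$) to the similitude setting. The only new ingredient is the factor $\lambda_g$ that now appears in the cocycle condition $(\ref{equiv})$ and in the twisted group law $q''(w) = \lambda_{g'}q(w) + q'(wg)$ of $\GPs(\mathscr{W})$.

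For exactness, I would first observe that $\iota: q_1 \mapsto (1, q_1)$ is a well-defined injective group homomorphism, because at $g=1$ we have $\lambda_1=1$, so the condition $(\ref{equiv})$ reduces to $q(w+w')-q(w)-q(w')=0$, i.e.\ $q\in \mathscr{W}^{\vee}=\Hom_{group}(\mathscr{W},K)$. The same computation identifies $\mathscr{W}^{\vee}$ with the kernel of the projection $\GPs(\mathscr{W})\to\GSp(\mathscr{W})$, so surjectivity of this projection will follow from the splitting constructed next.

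To construct a splitting, I would exploit the decomposition $\GSp(\mathscr{W})\simeq \Sp(\mathscr{W})\rtimes K^{\times}$, where $a\in K^{\times}$ is embedded as $h_a:x+x^{\ast}\mapsto x+ax^{\ast}$ (a direct check gives $\lambda_{h_a}=a$). A second direct check yields $B(wh_a, w'h_a)=aB(w,w')=\lambda_{h_a}B(w,w')$, so the zero function lies in $\Sigma_{h_a}$, producing a canonical lift $a\mapsto(h_a,0)\in\GPs(\mathscr{W})$. Combining this with the $\Sp$-splitting $\alpha_{\Sp}: \Sp(\mathscr{W}) \to \Ps(\mathscr{W})\subset \GPs(\mathscr{W})$ from the preceding Lemma \ref{split}, I would set $\alpha(g_0 h_a):=\alpha_{\Sp}(g_0)\cdot(h_a,0)$ for $g_0\in\Sp(\mathscr{W})$ and $a\in K^{\times}$.

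The principal obstacle is verifying that this $\alpha$ is a group homomorphism. Given $g=g_0 h_a$ and $g'=g_0' h_{a'}$, one rewrites $gg'=g_0(h_a g_0' h_a^{-1})h_{aa'}$ with $h_a g_0' h_a^{-1}\in\Sp(\mathscr{W})$, and then checks $\alpha(gg')=\alpha(g)\alpha(g')$ by matching the twisted product law against the multiplicativity of $\alpha_{\Sp}$. A slightly more self-contained alternative is to define $\alpha$ directly by the formula $(\ref{B})$ extended to $g=\begin{pmatrix}A&B\\C&D\end{pmatrix}\in\GSp(\mathscr{W})$; verifying $q_g\in\Sigma_g$ then reduces to the similitude identities $\langle xA,yB\rangle=\langle yA,xB\rangle$, $\langle x^{\ast}C,y^{\ast}D\rangle=\langle y^{\ast}C,x^{\ast}D\rangle$, and $\langle xA,y^{\ast}D\rangle-\langle y^{\ast}C,xB\rangle=\lambda_g\langle x,y^{\ast}\rangle$, each of which follows directly from $\langle wg,w'g\rangle=\lambda_g\langle w,w'\rangle$.
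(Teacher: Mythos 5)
Your plan is correct and matches the paper's proof: both use the decomposition $\GSp(\mathscr{W})\simeq \Sp(\mathscr{W})\rtimes K^{\times}$, lift $K^{\times}$ via $h_t\mapsto(h_t,0)$, and reduce the homomorphism check to the conjugation compatibility $\alpha(h_t)\alpha_{\Sp}(h_t^{-1}gh_t)=\alpha_{\Sp}(g)\alpha(h_t)$, which the paper verifies by a direct computation of $q''$ and $q'''$. Your alternative of extending formula $(\ref{B})$ directly to $\GSp(\mathscr{W})$ is also what the paper remarks immediately after the proof.
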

   \begin{proof}
Note that $\GSp(\mathscr{W})\simeq \Sp(\mathscr{W}) \rtimes K^{\times}$, where $h: K^{\times } \longrightarrow \GSp(\mathscr{W}); t \longmapsto h_t= \begin{pmatrix} 1&0\\0 & t\end{pmatrix}$. Let us define $\alpha(h_t)=(h_t, q_{h_t})$, where  $q_{h_t}(x+x^{\ast})=0$, for $x+x^{\ast}\in \mathscr{W}$.   Then $(h_t, q_{h_t}) \in \Sigma_{h_t}$, and $\alpha|_{K^{\times}}$ is a group homomorphism.  For $g=\begin{pmatrix} a& b\\c & d\end{pmatrix}\in \Sp(\mathscr{W})$, $h_t^{-1}gh_t=\begin{pmatrix} a& bt\\ct^{-1} & d\end{pmatrix}=g'$. As $\alpha(g)\alpha(h_t)=(g,q_g) (h_t, q_{h_t})=(gh_t, q'')$, $\alpha(h_t)\alpha(g')=(h_t, q_{h_t})(g',q_{g'}) =(h_tg',q''')$.
For $w=x+x^{\ast}\in \mathscr{W}$,
$$q''(w)=tq_g(w)+q_{h_t}(wg)= t\tfrac{1}{2}\langle
xa, xb\rangle+t\tfrac{1}{2}\langle x^{\ast}c, x^{\ast}d\rangle+t\langle x^{\ast}c, xb\rangle,$$
$$q'''(w)=q_{h_t}(w)+q_{g'}(wh_t)=\tfrac{1}{2}\langle
xa, xbt\rangle+\tfrac{1}{2}\langle tx^{\ast}ct^{-1}, tx^{\ast}d\rangle+\langle tx^{\ast}ct^{-1}, xbt\rangle$$
$$=t\tfrac{1}{2}\langle
xa, xb\rangle+t\tfrac{1}{2}\langle x^{\ast}c, x^{\ast}d\rangle+t\langle x^{\ast}c, xb\rangle.$$
  \end{proof}
The map $\alpha: \GSp(\mathscr{W})\longrightarrow \GPs(\mathscr{W})$, is   given by the same formula as $(\ref{B})$.
\begin{lemma}
Let $H(\mathscr{W})$ be the usual Heisenberg group.  Then there exists an isomorphism:
$$\alpha_B:    \Sp(\mathscr{W})\ltimes H(\mathscr{W}) \longrightarrow  \Sp(\mathscr{W}) \ltimes H_B(\mathscr{W})   ,$$
$$ \qquad  [g,(x,x^{\ast}; t)]  \longmapsto [ \alpha(g), (x, x^{\ast};t+\tfrac{1}{2}B( x, x^{\ast}))].$$
\end{lemma}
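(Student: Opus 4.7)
The plan is to factor $\alpha_B$ into two pieces: a homomorphism $\phi\colon H(\mathscr{W})\to H_B(\mathscr{W})$ that recenters the Heisenberg coordinate, and the already-constructed $\alpha\colon \Sp(\mathscr{W})\to \Ps(\mathscr{W})$ of Lemma \ref{split}. Concretely, one sets $\phi(w,t)=(w,\,t+\tfrac12 B(w,w))$. For $w=x+x^{\ast}$ one has $B(w,w)=\langle x,x^{\ast}\rangle$, which is exactly the shift $\tfrac12 B(x,x^{\ast})$ appearing in the statement, so this matches the formula for $\alpha_B$.

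First I would verify that $\phi$ is a group isomorphism $H(\mathscr{W})\xrightarrow{\sim} H_B(\mathscr{W})$. Expanding $\phi((w,t)(w',t'))$ and $\phi(w,t)+\phi(w',t')$ reduces, after the quadratic terms cancel, to the identity $\tfrac12\langle w,w'\rangle=\tfrac12\bigl(B(w,w')-B(w',w)\bigr)$, which is just the definition of $\langle,\rangle_{\mathscr{W}}$ in terms of $B$. Bijectivity is clear since $\phi$ only shifts the central coordinate.

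Next I would promote $\phi$ to the semidirect products. The required compatibility is $\phi((w,t)\cdot g)=\phi(w,t)\cdot \alpha(g)$, which, unravelling the actions $(w,t)\cdot g=(wg,t)$ on the left and $(w,t)\cdot(g,q_g)=(wg,t+q_g(w))$ on the right, collapses to the single identity
\[
q_g(w)=\tfrac12\bigl(B(wg,wg)-B(w,w)\bigr).
\]
Writing $g=\begin{pmatrix}a&b\\c&d\end{pmatrix}$ so that $wg=(xa+x^{\ast}c)+(xb+x^{\ast}d)$, the right-hand side expands as
\[
\tfrac12\bigl(\langle xa,xb\rangle+\langle xa,x^{\ast}d\rangle+\langle x^{\ast}c,xb\rangle+\langle x^{\ast}c,x^{\ast}d\rangle-\langle x,x^{\ast}\rangle\bigr).
\]
Comparing with the explicit formula \eqref{B} for $q_g$, the identity reduces to $\langle xa,x^{\ast}d\rangle-\langle x,x^{\ast}\rangle=\langle x^{\ast}c,xb\rangle$. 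This follows from applying the symplectic invariance $\langle xg,x^{\ast}g\rangle=\langle x,x^{\ast}\rangle$ and noting that $\langle xa,x^{\ast}c\rangle=0$ and $\langle xb,x^{\ast}d\rangle=0$ because $\mathcal{X}$ and $\mathcal{X}^{\ast}$ are Lagrangian.

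Having checked these two compatibilities, the map $\alpha_B([g,h])=[\alpha(g),\phi(h)]$ is a group homomorphism from $\Sp(\mathscr{W})\ltimes H(\mathscr{W})$ into $\Ps(\mathscr{W})\ltimes H_B(\mathscr{W})$, which we identify with $\Sp(\mathscr{W})\ltimes H_B(\mathscr{W})$ via the splitting $\alpha$ of Lemma \ref{split}; bijectivity is inherited factorwise. The main obstacle I anticipate is merely notational bookkeeping in the compatibility step, keeping straight the left/right action conventions and the embeddings $\mathcal{X},\mathcal{X}^{\ast}\hookrightarrow\mathscr{W}$. No new ingredient beyond the symplectic condition on $g$ and the relation $\langle,\rangle_{\mathscr{W}}=B-B^{\mathrm{op}}$ is required.
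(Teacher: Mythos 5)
Your proof is correct and follows essentially the same route as the paper: factor $\alpha_B$ through the two natural homomorphisms, then reduce the semidirect compatibility to the single identity $q_g(w)=\tfrac12\bigl(B(wg,wg)-B(w,w)\bigr)$. The only cosmetic difference is that the paper extracts this identity directly from the defining relation of $\Sigma_g$ at $w'=w$ (using $q_g(2w)=4q_g(w)$), while you verify it by expanding the explicit formula~\eqref{B} together with symplectic invariance and the Lagrangian conditions; both are immediate.
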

\begin{proof}
According to \cite[Pro.2.1 and Lmm.2.3]{Ta},  $\alpha_B|_{ H(\mathscr{W})}$ and $\alpha_B|_{\Sp(\mathscr{W})}$  both are group homomorphisms. So it suffices to check that the semi-direct actions are comparable  on both sides.  For $w=(x, x^{\ast}) \in \mathscr{W}$, $g=\begin{pmatrix} a& b\\c & d\end{pmatrix}\in \Sp(\mathscr{W})$,  $\alpha(g)=(g, q_g)\in \Sigma_g$, so $q_g(2w)-2q_g(w)=B(wg,wg)-B(w,w)$; it implies that $q_g(w)=\tfrac{1}{2}B(wg,wg)-\tfrac{1}{2}B(w,w)$.  Then:
$\alpha_B([1, (w,t)])= [1, (w, t+\tfrac{1}{2}B(w,w))]$, $\alpha_B([g, 0])=[(g, q_g), 0]$,
 $$\alpha_B([1, (w,t)][g, 0])=\alpha_B([g, (wg, t)])= [(g,q_g), (wg, t+\tfrac{1}{2}B(wg, wg))],$$
 $$\alpha_B([1, (w,t)]) \alpha_B([g, 0])= [1, (w, t+\tfrac{1}{2}B(w,w))][(g, q_g), 0]=[(g, q_g), (wg,t+\tfrac{1}{2}B(w,w)+q_g(w))].$$
\end{proof}
 Let $\Psi$ be a non-trivial character of $K$. By Stone-von Neumann's theorem, there exists a unique (up to
isomorphism) irreducible  Heisenberg representation of $H_{B}(\mathscr{W})$ with central character $\Psi$.
 Let $c_{PR, \mathcal{X}^{\ast}}(-,-)$ be the Perrin-Rao's cocycle in $\Ha^2(\Sp(\mathscr{W}), \mu_8)$ with respect to $\mathscr{W}=\mathcal{X}\oplus \mathcal{X}^{\ast}$ and $\Psi$(cf. \cite{MoViWa}, \cite{Pe}, \cite{Ra}).  Let $1\longrightarrow \mu_8\longrightarrow  \overline{\Sp(\mathscr{W})} \longrightarrow \Sp(\mathscr{W})\longrightarrow 1$ be the associated  central topological extension of $\Sp(\mathscr{W})$ by $\mu_8$.
 \begin{theorem}[Weil]
 The Heisenberg representation of $H_B(\mathscr{W})$ can extend to be an irreducible representation of $\overline{\Sp(\mathscr{W})}\ltimes H_B(\mathscr{W})$.
\end{theorem}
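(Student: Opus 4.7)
The plan is to run Weil's classical argument via Stone--von Neumann, then to match the resulting Schur cocycle with Perrin--Rao's explicit $\mu_8$-valued cocycle. Let $(\pi_\Psi, \mathcal{V}_\Psi)$ denote the (essentially unique) irreducible smooth representation of $H_B(\mathscr{W})$ with central character $\Psi$. For each $g \in \Sp(\mathscr{W})$, the twisted representation $h \longmapsto \pi_\Psi(h \cdot g)$ is again irreducible on $H_B(\mathscr{W})$ with the same central character $\Psi$, since $g$ preserves the symplectic form and fixes the center pointwise. By Stone--von Neumann there is an intertwiner $M(g) \in \Aut(\mathcal{V}_\Psi)$, unique up to scalar, satisfying $\pi_\Psi(h \cdot g) = M(g)^{-1} \pi_\Psi(h) M(g)$. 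Composing two such intertwiners yields $M(g_1) M(g_2) = c(g_1, g_2) M(g_1 g_2)$ for some Schur factor $c(g_1, g_2) \in \C^\times$, and associativity forces $c$ to be a $2$-cocycle on $\Sp(\mathscr{W})$ valued in $\C^\times$, with the scalar ambiguity in $M(g)$ corresponding exactly to coboundary freedom.

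The main obstacle is the second step: pinning down the cohomology class of $c$ as the Perrin--Rao class $c_{PR, \mathcal{X}^*}$. For this I will realize $\pi_\Psi$ in a model adapted to the polarization $\mathscr{W} = \mathcal{X} \oplus \mathcal{X}^*$ (either the Schrödinger model or the lattice model featured later in Section~\ref{even}), and write explicit candidate operators $M(g)$ on the generators of $\Sp(\mathscr{W})$: the Siegel unipotent radical acts by multiplication by a quadratic character built from $q_g$ of $(\ref{B})$, the Levi factor acts by composition with $a$ twisted by a suitable square-root factor, and the long Weyl element $\omega$ acts by a normalized Fourier transform involving a Weil index. Multiplying the candidate operators on pairs of generators and comparing with the generator presentation of $c_{PR, \mathcal{X}^*}$ shows that the intertwiner cocycle agrees with $c_{PR, \mathcal{X}^*}$ up to a coboundary $\partial f$ for some $f \colon \Sp(\mathscr{W}) \longrightarrow \C^\times$. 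Replacing $M(g)$ by $f(g)^{-1} M(g)$ produces a set-theoretic lift $\widetilde{M}$ whose defect is \emph{exactly} $c_{PR, \mathcal{X}^*}$ and therefore $\mu_8$-valued.

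Having identified $c$ with $c_{PR, \mathcal{X}^*}$, the formula
\begin{equation*}
\overline{g} = (g, \zeta) \longmapsto \zeta \cdot \widetilde{M}(g)
\end{equation*}
defines a genuine (non-projective) representation of $\overline{\Sp(\mathscr{W})}$ on $\mathcal{V}_\Psi$. Combining with the original Heisenberg action gives
\begin{equation*}
\Pi_\Psi \bigl[\, \overline{g}, (w, t)\,\bigr] \;=\; \zeta \cdot \widetilde{M}(g) \, \pi_\Psi(w, t),
\end{equation*}
and the defining intertwining identity, together with the fact that $\mu_8$ lies in the center of $\overline{\Sp(\mathscr{W})}$ and acts by scalars on $\mathcal{V}_\Psi$, gives straightforwardly that $\Pi_\Psi$ respects the semi-direct product law in $\overline{\Sp(\mathscr{W})} \ltimes H_B(\mathscr{W})$. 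Irreducibility is then automatic: any $\Pi_\Psi$-stable subspace is in particular $\pi_\Psi$-stable, and $\pi_\Psi$ is already irreducible on $H_B(\mathscr{W})$, so the subspace is $\{0\}$ or $\mathcal{V}_\Psi$.
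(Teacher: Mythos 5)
Your proposal is essentially correct and reproduces the classical Weil argument (as made explicit by Ranga Rao and Perrin), which the paper itself does not reproduce — it only cites the result to Weil. Stone--von Neumann gives the projective extension, explicit intertwiners on the generators $\{u(b),\, h_a,\, \omega\}$ pin the Schur cocycle down to $c_{PR,\mathcal{X}^*}$ up to a coboundary, and the intertwining identity then forces compatibility with the semi-direct product law; the irreducibility argument at the end (restrict to $H_B(\mathscr{W})$ and invoke irreducibility of $\pi_\Psi$) is the standard one.

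One small imprecision worth tightening: you write that the twist $h \longmapsto \pi_\Psi(h\cdot g)$ is legitimate ``since $g$ preserves the symplectic form and fixes the center pointwise''. For the twisted Heisenberg group $H_B(\mathscr{W})$ with law $(w,t)+(w',t')=(w+w',t+t'+B(w,w'))$, the raw map $(w,t)\mapsto (wg,t)$ is \emph{not} an automorphism of $H_B(\mathscr{W})$: only the antisymmetrization $\langle,\rangle_\mathscr{W}$ of $B$ is $\Sp$-invariant, not $B$ itself. The correct action, which the paper sets up through $\Ps(\mathscr{W})$ and the section $\alpha:g\mapsto (g,q_g)$ with $q_g$ as in $(\ref{B})$, is $(w,t)\cdot(g,q_g)=(wg,t+q_g(w))$; it is this action that fixes the center $(0,t)$ pointwise and makes $h\mapsto\pi_\Psi(h\cdot g)$ a representation of $H_B(\mathscr{W})$ with central character $\Psi$. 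Equivalently, one can work with the ordinary Heisenberg group $H(\mathscr{W})$ (where $g$ acts unambiguously) and transport via the isomorphism $\alpha_B$ of the paper. This does not change the rest of your argument, but it is the very reason the paper spends a lemma on $\Ps(\mathscr{W})$ before stating the theorem, and the semi-direct product $\overline{\Sp(\mathscr{W})}\ltimes H_B(\mathscr{W})$ in the statement must be read with this $q_g$-corrected action in mind.
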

 Let us denote this representation by $\Pi_{\Psi}$.  Note that the Heisenberg representation can also be extended to $\mathscr{W}^{\vee}\ltimes H_B(\mathscr{W})$.  Let us recall the  Schr\"odinger  model to realize it.
\subsection{Schr\"odinger  model}
Recall $\mathscr{W}=\mathcal{X}\oplus \mathcal{X}^{\ast}$.  Let $S(\mathcal{X})$ denote the Schwartz-Bruhat space over $\mathcal{X}$.  By \cite[p.138, Prop.]{BuHe}, \cite{Ra},  we let $\mu_K$ denote the self-dual Haar measure on $K$ with respect to $\Psi$. Let $\mu_{\mathcal{X}}$,  $\mu_{\mathcal{X}^{\ast}}$ be the Haar measures on $\mathcal{X}$, $\mathcal{X}^{\ast}$, given by the product  measures from $\mu_K$. The representation $\Pi_{\Psi}$ can be realized on $S(\mathcal{X})$ by the following formulas(cf. \cite{Ra}, \cite{MoViWa}, \cite{Pe}, \cite{We}):
\begin{equation}\label{representationsp1}
\Pi_{\Psi}[(y,y^{\ast}), k]f(x)=\Psi(k+B(x, y^{\ast})) f(x+y),
\end{equation}
\begin{equation}\label{representationsp2}
\Pi_{\Psi}[ \begin{pmatrix}
  1&b\\
  0 & 1
\end{pmatrix}, t]f(x)=t\Psi(\tfrac{1}{2}B(
x, xb)) f(x),
\end{equation}
\begin{equation}\label{representationsp3}
\Pi_{\Psi}[ \begin{pmatrix}
  a& 0\\
  0 &a^{\ast -1 }
\end{pmatrix},t]f(x)=t|det_{\mathcal{X}}(a)|^{\tfrac{1}{2}} f(xa),
\end{equation}
\begin{equation}\label{representationsp4}
\Pi_{\Psi}[ \omega, t]f(x)=t\int_{\mathcal{X}^{\ast}} \Psi(B( x, z^{\ast})) f(z^{\ast}\omega^{-1}) d\mu_{\mathcal{X}^{\ast}}(z^{\ast}),
\end{equation}
\begin{equation}\label{representationsp4}
\Pi_{\Psi}[ q]f(x)=\Psi(q(x)) f(x),
\end{equation}
where $x,y \in \mathcal{X},  y^{\ast}\in   \mathcal{X}^{\ast}$, $k\in K$, $t\in \mu_8$, $q\in  \mathscr{W}^{\vee}$, and $\begin{pmatrix}
  a& 0\\
  0 &a^{\ast -1 } \end{pmatrix}$, $\begin{pmatrix}
  1&b\\
  0 & 1
\end{pmatrix}$, $\omega\in \Sp( \mathscr{W})$, $e_i \omega=-e_i^{\ast}$, $e_i^{\ast}\omega=e_i$.

Moreover, under such actions, for $g_1, g_2\in \Sp( \mathscr{W})$, $\Pi_{\Psi}(g_1)\Pi_{\Psi}(g_2)=c_{PR, \mathcal{X}^{\ast}}(g_1,g_2)\Pi_{\Psi}(g_1g_2)$, for the Perrin-Rao's 2-cocycle $c_{PR, \mathcal{X}^{\ast}}(-,-)$ associated with  $\Psi$ and $\mathcal{X}^{\ast}$.

For a subset  $S\subseteq \{1, \cdots, m\}$,  define the element  $\omega_S \in \Sp(\mathscr{W})$ as follows:
$$ (e_i)\omega_S=\left\{\begin{array}{lr}
-e_i^{\ast}& i\in S\\
 e_i & i\notin S
 \end{array}\right. \textrm{ and } (e_i^{\ast})\omega_S=\left\{\begin{array}{lr}
e_i & i\in S \\
e_i^{\ast}& i\notin S
 \end{array}\right..$$ Let $\mathcal{X}_S=\Span\{ e_i\mid i\in S\}$, $\mathcal{X}_{S'}=\Span\{ e_i\mid i\notin S\}$ and $\mathcal{X}_S^{\ast}= \Span\{ e^{\ast}_i\mid i\in S\}$, $\mathcal{X}_{S'}^{\ast}= \Span\{ e^{\ast}_i\mid i\notin S\}$. By \cite[p.388, Prop.2.1.4]{Pe} or \cite[p.351, Lmm.3.2,(3.9)]{Ra}, for $x=x_s+x_{s'}\in \mathcal{X}_S\oplus \mathcal{X}_{S'}$,
 \begin{equation}
  \begin{split}
 [\Pi_{\Psi}(\omega_S)f](x_s+x_{s'})&=\int_{\mathcal{X}_S^{\ast}} \Psi(\tfrac{1}{2}\langle x_{s'} \omega_S,x_s  \omega_S\rangle+ \langle z_s^{\ast} \omega_S,  x_s\omega_S\rangle) f(x_{s'}\omega_S+z_s^{\ast}\omega_S)d\mu_{\mathcal{X}_S^{\ast}}(z_s^{\ast})\\
  &=\int_{\mathcal{X}_S^{\ast}} \Psi(B( x_s,  z_s^{\ast})) f(x_{s'}\omega_S+z_s^{\ast}\omega^{-1}_S)d\mu_{\mathcal{X}_S^{\ast}}(z_s^{\ast}).
    \end{split}
  \end{equation}
\subsection{Lattice model}
Let $\Psi$ now be a character of level  $ \mathfrak{m}^2$.  Let $$\mathcal{L}=\Span_{\mathfrak{O}_K}\{ 2e_1, \cdots, 2e_m;  2e_1^{\ast}, \cdots,  2e_m^{\ast}\},$$  which is a self-dual lattice of  $(\mathscr{W}, \langle, \rangle_{\mathscr{W}})$ with respect to $\Psi$. Let $H_{B}(\mathcal{L})=\mathcal{L}\times K$  denote   the corresponding Heisenberg subgroup of $H_{B}(\mathscr{W})$. Let $\Psi_{\mathcal{L}}$ be the extended  character of $H_{B}(\mathcal{L})$ defined as: $(l, t) \longmapsto \Psi(t)$, for $l\in \mathcal{L}$, $t \in K$.
\begin{proposition}
   $(\Pi_{\Psi}=\cInd_{H_{B}(\mathcal{L})}^{H_{B}(\mathscr{W})} \Psi_A,  S_{\mathcal{L}}=\cInd_{H_{B}(\mathcal{L})}^{H_{B}(\mathscr{W})} \C)$ defines a Heisenberg representation of $H_{B}(\mathscr{W})$ with central character $\Psi$.
\end{proposition}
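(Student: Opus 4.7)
The plan is to break the statement into three verifications: that $\Psi_{\mathcal{L}}$ is a character of $H_{B}(\mathcal{L})$; that the induced representation has central character $\Psi$; and that it is irreducible. The first two are short bookkeeping while the third is the main content. Once all three are in place, the Stone--von Neumann theorem quoted just above identifies $(\Pi_{\Psi}, S_{\mathcal{L}})$ with the unique Heisenberg representation of central character $\Psi$.

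For the first check, the group law of $H_{B}(\mathscr{W})$ forces me to verify $\Psi(B(l,l'))=1$ for all $l,l'\in \mathcal{L}$. Writing elements of $\mathcal{L}$ in the $\mathfrak{O}$-basis $\{2e_{i},\,2e_{j}^{\ast}\}$ and using bilinearity of $B$, every such $B(l,l')$ lies in $4\mathfrak{O}=\mathfrak{m}^{2}$, on which $\Psi$ is trivial by the assumption on its level. For the second, the center of $H_{B}(\mathscr{W})$ is $\{(0,t)\mid t\in K\}$; it sits inside $H_{B}(\mathcal{L})$ and satisfies $\Psi_{\mathcal{L}}(0,t)=\Psi(t)$, so compact induction hands me the desired central character automatically.

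The substantive step is irreducibility. I would first establish the auxiliary self-duality $\mathcal{L}^{\perp}=\mathcal{L}$ with respect to the pairing $(w,w')\mapsto \Psi(\langle w,w'\rangle_{\mathscr{W}})$. The inclusion $\mathcal{L}\subseteq \mathcal{L}^{\perp}$ reuses the computation from the first check. Conversely, testing an arbitrary $w=\sum x_{i}e_{i}+\sum x_{i}^{\ast}e_{i}^{\ast}\in \mathcal{L}^{\perp}$ against the generators $2e_{j}^{\ast}$ and $2e_{j}$ of $\mathcal{L}$ forces $2x_{j},\,2x_{j}^{\ast}\in \mathfrak{m}^{2}$, hence $x_{j},\,x_{j}^{\ast}\in 2\mathfrak{O}$; here one crucially uses that the conductor of $\Psi$ is exactly $\mathfrak{m}^{2}$.

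With self-duality in hand, I would invoke the Mackey irreducibility criterion for compactly induced characters from an open subgroup of a locally compact group. A direct calculation in $H_{B}(\mathscr{W})$ shows that conjugation by $(w,s)$ sends $(l,t)\in H_{B}(\mathcal{L})$ to $(l,\,t+\langle w,l\rangle_{\mathscr{W}})$, so the conjugate character $\Psi_{\mathcal{L}}^{(w,s)}$ coincides with $\Psi_{\mathcal{L}}$ on $H_{B}(\mathcal{L})$ exactly when $\Psi(\langle w,l\rangle_{\mathscr{W}})=1$ for every $l\in \mathcal{L}$, i.e.\ when $w\in \mathcal{L}^{\perp}=\mathcal{L}$, i.e.\ when $(w,s)\in H_{B}(\mathcal{L})$. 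Mackey's criterion then forces $\Pi_{\Psi}$ to be irreducible. I expect the most delicate point to be the self-duality step, where one must carefully track conductors and factors of $2$ in the dyadic field $K$; everything else is bookkeeping in the Heisenberg cocycle.
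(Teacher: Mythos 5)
Your argument is correct and follows the standard lattice-model line of reasoning that underlies the references the paper cites (\cite{MoViWa}, \cite{Bl}, \cite{Ta}): check that $\Psi_{\mathcal{L}}$ extends $\Psi$ to a character of $H_B(\mathcal{L})$ using triviality of $\Psi$ on $B(\mathcal{L},\mathcal{L})\subseteq\mathfrak{m}^2$, observe that $H_B(\mathcal{L})$ is an open, normal, compact-mod-center subgroup whose conjugate characters $\Psi_{\mathcal{L}}^{(w,s)}$ shift the argument by $\Psi(\langle w,\cdot\rangle_{\mathscr{W}})$, reduce irreducibility of $\cInd$ to the self-duality $\mathcal{L}^\perp=\mathcal{L}$ via Mackey, and then invoke Stone--von Neumann. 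The self-duality calculation is exactly where the choice of $\Psi$ with conductor precisely $\mathfrak{m}^2$ and the normalization $\mathcal{L}=\Span_{\mathfrak{O}}\{2e_i,2e_i^\ast\}$ interlock, and you have handled the factor-of-$2$ bookkeeping correctly. This is essentially the same proof the paper delegates to its references.
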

\begin{proof}
See \cite[pp.28-30]{MoViWa}, \cite[pp.9-11]{Bl}, \cite[pp.8-9]{Ta}.
\end{proof}
\begin{remark}
Let us go back to  the ordinary Heisenberg group $H(\mathscr{W})$. Let $H(\mathcal{L})=\mathcal{L}\times K\subseteq H(\mathscr{W})$. The character $\Psi$ of $K$ can extend to  $H(\mathcal{L})$ by defining $\Psi_{\mathcal{L}}(l, t)=\Psi(t-\frac{1}{2}\langle x_l, x_l^{\ast}\rangle)$, for $l=x_l+x_{l}^{\ast}\in \mathcal{L}=\mathcal{L}\cap \mathcal{X} +\mathcal{L}\cap \mathcal{X}^{\ast}$. According to \cite[I.3]{MoViWa}, $\cInd_{H(\mathcal{L})}^{H(\mathscr{W})}\Psi_{\mathcal{L}}$ defines a Heisenberg representation $H(\mathscr{W})$ with the central character $\Psi$.
\end{remark}
 The space $S_{\mathcal{L}}$ consists  of locally constant, compactly supported functions $f$ on $\mathscr{W}$  such that
 $$f(l+w)=\Psi(-B( l, w))f(w),$$ for $l\in \mathcal{L}$, $w\in \mathscr{W}$.  For $h=(w',t)\in H_{B}(\mathscr{W})$,
\begin{equation}\label{BB'}
\Pi_{\psi}(h) f(w)=f([w,0]+[w',t])=f([w+w', t+B(w,w')])=\Psi( t+B(w, w')) f(w+w').
\end{equation}

  Following \cite[p.138, Prop.]{BuHe}, let $\mu_K$ denote the Haar measure on $K$ such that $\mu_K(\mathfrak{O})=|F|$. Let $\mu_{\mathcal{X}}$, $\mu_{\mathcal{X}^{\ast}}$ be the product measures on $\mathcal{X}$ and $\mathcal{X}^{\ast}$ from $\mu_K$. Let $\mu_{\mathcal{X}\cap \mathcal{L}}= \mu_{\mathcal{X}}|_{\mathcal{X}\cap \mathcal{L}}$, and $\mu_{\mathcal{X}^{\ast} \cap\mathcal{L}}=\mu_{\mathcal{X}^{\ast}}|_{\mathcal{X}^{\ast}\cap \mathcal{L}}$.  Let $\mu_{\mathcal{X}/\mathcal{X} \cap\mathcal{L}}$, resp. $\mu_{\mathcal{X}^{\ast}/\mathcal{X}^{\ast} \cap\mathcal{L}}$  denote the  Haar measures  on $\mathcal{X}/\mathcal{X} \cap\mathcal{L}$, resp. $\mathcal{X}^{\ast}/\mathcal{X}^{\ast} \cap\mathcal{L}$,  such that $\mu_{\mathcal{X}}=\mu_{\mathcal{X} \cap\mathcal{L}} \mu_{\mathcal{X}/\mathcal{X} \cap\mathcal{L}}$, $\mu_{\mathcal{X}^{\ast}}=\mu_{\mathcal{X}^{\ast} \cap\mathcal{L}} \mu_{\mathcal{X}^{\ast}/\mathcal{X}^{\ast} \cap\mathcal{L}}$. For simplicity of notations, sometimes we  omit the notation $\mu$ and  those subscripts.

   By \cite[pp.164-165]{We}, there exists  a pair of explicit isomorphisms between   $S(\mathcal{X})$ and  $S_{\mathcal{L}}$, given as follows:
 \begin{equation}\label{eq1}
 \mathcal{F}_{\mathcal{X}\mathcal{L}}(f')(w)=\int_{\mathcal{L}\cap \mathcal{X}} f'(x+l) \Psi( B( l, w)) d l,
 \end{equation}
   \begin{equation}\label{eq2}
  \mathcal{F}_{\mathcal{L}\mathcal{X}}(f)(y)=  \int_{\mathcal{X}^{\ast}/\mathcal{X}^{\ast}\cap \mathcal{L}} f(y+\dot{y}^{\ast}) d\dot{y}^{\ast},
      \end{equation}
      for $ w=x+ x^{\ast} \in \mathscr{W}$, $y\in \mathcal{X}$,  $f'\in S(\mathcal{X})$,  $f\in S_{\mathcal{L}}$.

     For an element   $f\in S_{\mathcal{L}}$, let $f'=\mathcal{F}_{\mathcal{L}\mathcal{X}}(f)\in S(\mathcal{X})$. Then $f=\mathcal{F}_{\mathcal{X}\mathcal{L}}(f')$.  Through  the isomorphisms $\mathcal{F}_{\mathcal{X}\mathcal{L}}$, $\mathcal{F}_{\mathcal{L}\mathcal{X}}$, for $g\in  \Sp(\mathscr{W})$, we can define
     \begin{equation}
     \Pi_{\Psi}(g)f(w)=\mathcal{F}_{\mathcal{X}\mathcal{L}}[\Pi_{\Psi}(g)(f')](w)=\mathcal{F}_{\mathcal{X}\mathcal{L}}[\Pi_{\Psi}(g)\mathcal{F}_{\mathcal{L}\mathcal{X}}(f)](w).
     \end{equation}
     Under such action, for $g_1, g_2\in \Sp(\mathscr{W})$,
     \begin{equation}
     \begin{split}
     \Pi_{\Psi}(g_1)[\Pi_{\Psi}(g_2)f]&=\mathcal{F}_{\mathcal{X}\mathcal{L}}[\Pi_{\Psi}(g_1)\mathcal{F}_{\mathcal{L}\mathcal{X}}]([\Pi_{\Psi}(g_2)f])\\
     &=\mathcal{F}_{\mathcal{X}\mathcal{L}}[\Pi_{\Psi}(g_1)\mathcal{F}_{\mathcal{L}\mathcal{X}}]\mathcal{F}_{\mathcal{X}\mathcal{L}}
     [\Pi_{\Psi}(g_2)\mathcal{F}_{\mathcal{L}\mathcal{X}}](f)\\
     &=\mathcal{F}_{\mathcal{X}\mathcal{L}}[\Pi_{\Psi}(g_1)\Pi_{\Psi}(g_2)\mathcal{F}_{\mathcal{L}\mathcal{X}}](f)\\
     &=c_{PR, \mathcal{X}^{\ast}}(g_1, g_2)\Pi_{\Psi}(g_1g_2)f.
     \end{split}
     \end{equation}
\subsection{ $\Sp(\mathcal{L})$} In particular, let us consider  $g\in \Sp(\mathcal{L})$. In this case,  $\mathcal{L}g = \mathcal{L}$.  Under the basis $\{ 2e_1, \cdots, 2e_m;  2e_1^{\ast}, \cdots,  2e_m^{\ast}\}$ of $\mathcal{L}$, $\Sp(\mathcal{L})\simeq \Sp_{2m}(\mathfrak{O})$.  Let $w=x+x^{\ast} \in \mathscr{W}$. \\
Case 1: $g=\begin{pmatrix}
  1&b\\
  0 & 1
\end{pmatrix} \in \Sp_{2m}(\mathfrak{O})$, and $b\in  M_m(\mathfrak{m})$. In this case, $\Psi(q_g(l))=1$, for any $l\in\mathcal{L}\cap \mathcal{X}$.
\begin{equation}
\begin{split}
\Pi_{\Psi}(g)f(w)& =\int_{\mathcal{L}\cap \mathcal{X}} [\Pi_{\Psi}(g)f'](x+l) \Psi( B( l, w)) dl \\
& =\int_{\mathcal{L}\cap \mathcal{X}} \Psi(\frac{1}{2}B(x+l, (x+l)b)) f'(x+l) \Psi( B( l, w)) d l\\
&=\int_{\mathcal{L}\cap \mathcal{X}} \int_{\mathcal{X}^{\ast}/\mathcal{X}^{\ast} \cap \mathcal{L}} \Psi(\frac{1}{2}B(x+l, (x+l)b)) f(x+l+ \dot{y}^{\ast}) \Psi( B( l, w)) d\dot{y}^{\ast}dl\\
& =\Psi(q_g(w))\int_{\mathcal{L}\cap \mathcal{X}} \int_{\mathcal{X}^{\ast}/\mathcal{X}^{\ast} \cap \mathcal{L}} f(x+l+ \dot{y}^{\ast}) \Psi( B( l, x^{\ast}+xb)) d\dot{y}^{\ast}dl\\
&=\Psi(q_g(w)) f(wg).
\end{split}
\end{equation}
Case 2: $g=h_a=\begin{pmatrix}
  a&\\
   & (a^{\ast})^{-1}
\end{pmatrix} \in \Sp(\mathcal{L})$.
\begin{equation}
\begin{split}
\Pi_{\Psi}(g)f(w)& =\int_{\mathcal{L}\cap \mathcal{X}} [\Pi_{\Psi}(g)f'](x+l) \Psi( B( l, w)) dl\\
&=\int_{\mathcal{L}\cap \mathcal{X}} |det_{\mathcal{X}}(a)|^{\tfrac{1}{2}}  f'((x+l)a) \Psi( B( l, w)) d l\\
&=\int_{\mathcal{L}\cap \mathcal{X}} |det_{\mathcal{X}}(a)|^{\tfrac{1}{2}} \int_{\mathcal{X}^{\ast}/\mathcal{X}^{\ast}\cap \mathcal{L}} f((x+l)a+\dot{y}^{\ast}) \Psi( B( l, w)) d l  d\dot{y}^{\ast}\\
&=\int_{\mathcal{L}\cap \mathcal{X}}\int_{\mathcal{X}^{\ast}/\mathcal{X}^{\ast}\cap \mathcal{L}}  f((x+l)a+\dot{y}^{\ast})\Psi( B( l, w)) d(la)d\dot{y}^{\ast}\\
&=\int_{\mathcal{L}\cap \mathcal{X}}\int_{\mathcal{X}^{\ast}/\mathcal{X}^{\ast}\cap \mathcal{L}}  f((xa+l'+\dot{y}^{\ast})\Psi( B( l'a^{-1}, w)) dl'd\dot{y}^{\ast}\\
&=f(xa+x^{\ast}(a^{\ast})^{-1})=f(wg)=\Psi(q_g(w)) f(wg).
\end{split}
\end{equation}
Case 3: $g=\omega_S \in \Sp(\mathcal{L})$.  Let $w=x+ x^{\ast}$, for $x=x_s+x_{s'}\in \mathcal{X}_S\oplus\mathcal{X}_{S'} $, $x^{\ast}=x^{\ast}_s+x^{\ast}_{s'}\in \mathcal{X}^{\ast}_S\oplus\mathcal{X}^{\ast}_{S'} $.
\begin{equation}\label{eq3}
\begin{split}
\Pi_{\Psi}(g)f(w)& =\int_{\mathcal{L}\cap \mathcal{X}} [\Pi_{\Psi}(g)f'](x+l) \Psi( B( l, w)) dl\\
&= \int_{\mathcal{L}\cap \mathcal{X}}  \int_{\mathcal{X}_S^{\ast}} \Psi( B( l, w))\Psi(B( x_s+l_s,  z_s^{\ast})) f'((x_{s'}+l_{s'})\omega_S+z_s^{\ast}\omega^{-1}_S)dz_s^{\ast}dl\\
&=\int_{\mathcal{L}\cap \mathcal{X}}  \int_{\mathcal{X}_S^{\ast}}   \int_{\mathcal{X}^{\ast}/\mathcal{X}^{\ast}\cap \mathcal{L}} \Psi( B( l, w))\Psi(B( x_s+l_s,  z_s^{\ast})) f((x_{s'}+l_{s'})\omega_S+z_s^{\ast}\omega^{-1}_S+\dot{y}^{\ast}) d\dot{y}^{\ast} dz_s^{\ast}   dl\\
&= \int_{\mathcal{X}_S^{\ast}}   \int_{\mathcal{X}^{\ast}/\mathcal{X}^{\ast}\cap \mathcal{L}} f(x_{s'}\omega_S+z_s^{\ast}\omega^{-1}_S+\dot{y}^{\ast})\Psi(B( x_s,  z_s^{\ast})) \\ &\qquad\qquad\qquad\qquad\qquad\cdot[\int_{\mathcal{L}\cap \mathcal{X}}\Psi(-B(l_{s'}\omega_S, \dot{y}^{\ast})  \Psi( B( l, w))\Psi(B( l_s,  z_s^{\ast}))dl] d\dot{y}^{\ast}dz_s^{\ast}.
\end{split}
\end{equation}
\begin{equation}
\begin{split}
&\int_{\mathcal{L}\cap \mathcal{X}}\Psi(-B(l_{s'}\omega_S, \dot{y}^{\ast})  \Psi( B( l, w))\Psi(B( l_s,  z_s^{\ast}))dl\\
&=\int_{\mathcal{L}\cap \mathcal{X}_S}  \Psi( B( l_s, x_{s}^{\ast}+ z_s^{\ast}))dl_s   \int_{\mathcal{L}\cap \mathcal{X}_{S'}}  \Psi(B(l_{s'}, -\dot{y}_{s'}^{\ast}+x_{s'}^{\ast}) dl_{s'},
\end{split}
\end{equation}
which is non-zero iff $x_s^{\ast}+z_s^{\ast}\in \mathcal{L}\cap X_S^{\ast}$, and $x_{s'}^{\ast}-\dot{y}_{s'}^{\ast} \in \mathcal{L}\cap X_{S'}^{\ast}$. Moreover, $\int_{\mathcal{L}\cap \mathcal{X}_S}  dl_s=   \int_{\mathcal{L}\cap \mathcal{X}_{S'}} dl_{s'}=1$.
\begin{gather}
\begin{split}
(\ref{eq3})&=\int_{\mathcal{L}\cap \mathcal{X}_S^{\ast}}   \int_{\mathcal{X}_S^{\ast}/\mathcal{X}_S^{\ast}\cap \mathcal{L}}    f(x_{s'}\omega_S+(-x_{s}^{\ast}+z_s^{\ast})\omega^{-1}_S+\dot{y}_s^{\ast}+ x_{s'}^{\ast})\Psi(B( x_s,  -x_{s}^{\ast}+z_s^{\ast}))dz_s^{\ast}d\dot{y}_s^{\ast}\\
&=\Psi(B( x_s,  -x_{s}^{\ast}))\int_{\mathcal{L}\cap \mathcal{X}_S^{\ast}}   \int_{\mathcal{X}_S^{\ast}/\mathcal{X}_S^{\ast}\cap \mathcal{L}}    f(x_{s'}\omega_S+x_{s}^{\ast}\omega_S+z_s^{\ast}\omega^{-1}_S+\dot{y}_s^{\ast}+ x_{s'}^{\ast}\omega_S)\Psi(B( x_s,  z_s^{\ast}))dz_s^{\ast}d\dot{y}_s^{\ast}\\
&=\Psi(B( x_s,  -x_{s}^{\ast}))\int_{\mathcal{L}\cap \mathcal{X}^{\ast}_S}   \int_{\mathcal{X}_S^{\ast}/\mathcal{X}_S^{\ast}\cap \mathcal{L}}    f(x_{s'}\omega_S+z_s^{\ast}\omega^{-1}_S+\dot{y}_s^{\ast}+x^{\ast}\omega_S)\Psi(B( x_s ,  z_s^{\ast}))dz_s^{\ast}d\dot{y}_s^{\ast}\\
&=\Psi(B( x_s,  -x_{s}^{\ast}))\int_{\mathcal{L}\cap \mathcal{X}_S}   \int_{\mathcal{X}_S^{\ast}/\mathcal{X}_S^{\ast}\cap \mathcal{L}}    f(x_{s'}\omega_S+l_s+\dot{y}_s^{\ast}+x^{\ast}\omega_S) \Psi(B(l_s, x_s \omega_S ))dl_sd\dot{y}_s^{\ast}\\
&=\Psi(B( x_s,  -x_{s}^{\ast}))f(w\omega_S )=\Psi(q_g(w)) f(wg).
\end{split}
\end{gather}

Case 4: $g=q \in \mathscr{W}^{\vee}$, and $\Psi(q(l))=1$, for $l\in \mathcal{X}\cap \mathcal{L}$.
\begin{equation}
\begin{split}
\Pi_{\Psi}(g)f(w)& =\int_{\mathcal{L}\cap \mathcal{X}} [\Pi_{\Psi}(g)f'](x+l) \Psi( B( l, w)) dl\\
&=\int_{\mathcal{L}\cap \mathcal{X}} \Psi(q(x+l))  f'(x+l) \Psi( B( l, w)) d l\\
&=\int_{\mathcal{L}\cap \mathcal{X}} \int_{\mathcal{X}^{\ast}/\mathcal{X}^{\ast}\cap \mathcal{L}} \Psi(q(x+l)) f(x+l+\dot{y}^{\ast}) \Psi( B( l, w)) d l  d\dot{y}^{\ast}\\
&=\int_{\mathcal{L}\cap \mathcal{X}}\int_{\mathcal{X}^{\ast}/\mathcal{X}^{\ast}\cap \mathcal{L}} \Psi(q(x)) f(x+l+\dot{y}^{\ast})\Psi( B( l, w)) dld\dot{y}^{\ast}\\
&=\Psi(q(x)) f(w).
\end{split}
\end{equation}
\subsubsection{Iwahori decomposition}\label{Iwode} Let us recall some results from connected reductive groups.  By \cite[p.54]{MoViWa}),  $\Sp(\mathscr{W})=\cup_{S} P(\mathcal{X}^{\ast})\omega_SP(\mathcal{X}^{\ast})$.

Under the basis $\{e_1, \cdots, e_m;e_1^{\ast}, \cdots, e_m^{\ast}\}$ of $\mathscr{W}$, we identity $\Sp(\mathscr{W})$ with $\Sp_{2m}(K)$. Let $B(K)$ denote the standard Borel subgroup of   $\Sp_{2m}(K)$(cf. \cite[p.104]{Sp}), $N(K)$ the unipotent subgroup of $B(K)$, and let $N^-(K)$ denote  the opposite group of $N(K)$.   Let $I$ be an Iwahori subgroup of $\Sp_{2m}(\mathfrak{O})$, which is the inverse image of $B(\mathfrak{O}/\mathfrak{m})$ in $\Sp_{2m}(\mathfrak{O})$.  Let $N=N(K)\cap \Sp_{2m}(\mathfrak{O})$, $B=B(K)\cap \Sp_{2m}(\mathfrak{O})$, and $N^-=N^-(K)\cap \Sp_{2m}(\mathfrak{O})$. Let $\mathfrak{W}$ denote the usual  Weyl group of $\Sp_{2m}(F)$ such that $\omega_S\in \mathfrak{W}$ and the symmetric group  $S_m \subseteq \mathfrak{W}$.

According to \cite[p.216]{CaSh},\cite{DeKaVi},\cite{Vi}, $\Sp_{2m}(\mathfrak{O})=\cup_{\tau \in \mathfrak{W}} I\tau I$, and $I=(N^-\cap I)(B\cap I)$.  Let $\Gamma_1=\{ g\in \Sp_{2m}( \mathfrak{O}) \mid  g\equiv I (\mod\mathfrak{m})\}$. Then $\Gamma_1\subseteq I$, $(N^-\cap I) \subseteq \Gamma_1$, and $\Gamma_1=(N^-\cap \Gamma_1)(B\cap \Gamma_1)$.
Recall that if  $S=\{1, \cdots, m\}$, we write simply $\omega$ instead of $\omega_S$.
\begin{lemma}\label{coco}
$c_{PR, \mathcal{X}^{\ast}}(\omega,\omega^{-1}g)\Pi_{\Psi}(g)f(w)=\Psi(q_g(w))f(wg)$, for $g\in N^-\cap \Gamma_1$.
\end{lemma}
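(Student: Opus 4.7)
The plan is to apply the Perrin-Rao cocycle identity
\[
\Pi_\Psi(\omega)\Pi_\Psi(\omega^{-1}g)=c_{PR,\mathcal{X}^*}(\omega,\omega^{-1}g)\,\Pi_\Psi(g),
\]
which turns the claim into the equivalent assertion
\[
[\Pi_\Psi(\omega)\Pi_\Psi(\omega^{-1}g)f](w)=\Psi(q_g(w))f(wg)\qquad(g\in N^-\cap\Gamma_1).
\]
Writing $g=\begin{pmatrix} I & 0 \\ c & I \end{pmatrix}$ with $c\in M_m(\mathfrak{m})$ symmetric, formula (\ref{B}) gives $q_g(x+x^*)=\tfrac{1}{2}\langle x^*c,x^*\rangle$ and $wg=(x+x^*c)+x^*$, so the target formula is completely explicit.

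I would first compute $h:=\Pi_\Psi(\omega^{-1}g)f$ in the lattice model. Since $\omega^{-1}g=\begin{pmatrix} c & I \\ -I & 0 \end{pmatrix}$ has invertible lower-left block, it lies in the big Bruhat cell of $\Sp(\mathscr{W})$ and admits a factorisation $u_1\cdot h_a\cdot\omega\cdot u_2$ with $u_i$ upper unipotent and $h_a$ diagonal (when $c$ is invertible; the general case follows by reducing to elementary symplectic transvections, which generate $N^-\cap\Gamma_1$). Applying formulas (\ref{representationsp2})-(\ref{representationsp4}) in succession yields an explicit integral expression for $h$; the cocycle factors introduced by each step collect, via the cocycle axiom, into exactly $c_{PR,\mathcal{X}^*}(\omega,\omega^{-1}g)$.

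Composing with $\Pi_\Psi(\omega)$ using Case~3 (with $S=\{1,\ldots,m\}$, so $\omega_S=\omega$) produces a double integral over $\mathcal{L}\cap\mathcal{X}$ and $\mathcal{X}^*$. Fubini together with an inner orthogonality-of-characters computation collapses one variable of integration, and the surviving phases assemble into $\Psi(q_g(w))$ while the argument of $f$ simplifies to $wg$. The hypothesis $c\in M_m(\mathfrak{m})$ enters decisively here: the various quadratic self-interactions of lattice vectors against entries of $c$ land in $\mathfrak{m}^2$, and hence in the kernel of $\Psi$, so the troublesome quadratic terms vanish exactly as in the analogous Case~1 computation.

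The main obstacle is bookkeeping. Each invocation of a Schr\"odinger-model formula introduces a Perrin-Rao cocycle factor, and one must verify that these combine coherently into $c_{PR,\mathcal{X}^*}(\omega,\omega^{-1}g)$; this requires careful tracking of the Haar-measure normalisations used in the lattice and Schr\"odinger models. In residue characteristic $2$, the factor $\tfrac{1}{2}$ in (\ref{B}) also calls for attention, but the chosen normalisations ($\Psi$ of level $\mathfrak{m}^2$ and $c\in M_m(\mathfrak{m})$) keep all dyadic contributions trivial, paralleling the lattice-model computations of Cases~1-4.
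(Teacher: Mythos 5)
Your overall strategy (reduce via the cocycle identity to computing $\Pi_\Psi(\omega)\Pi_\Psi(\omega^{-1}g)$, then exploit a factorisation into generators with explicit formulas) is the same as the paper's, but the execution is both messier than necessary and slightly off in the bookkeeping. The paper simply notes that $g_1:=\omega^{-1}g\omega$ lies in $N\cap\Gamma_1\subseteq P(\mathcal{X}^*)$, so that $\omega^{-1}g=g_1\omega^{-1}$; since Rao's cocycle is trivial when the left argument lies in the Siegel parabolic, $\Pi_\Psi(\omega^{-1}g)=\Pi_\Psi(g_1)\Pi_\Psi(\omega^{-1})$ with no extra factor, and the triple product $\Pi_\Psi(\omega)\Pi_\Psi(g_1)\Pi_\Psi(\omega^{-1})$ is evaluated by \emph{quoting} Cases~1--3 for each factor together with the chain rule $q_{gg'}(w)=q_g(w)+q_{g'}(wg)$; no new integral computation or measure bookkeeping is required. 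Your Bruhat factorisation $u_1 h_a\omega u_2$ is actually this same decomposition in disguise: explicitly $\omega^{-1}g=u(-c)\,h_{-1}\,\omega$, which holds for \emph{every} $c\in M_m(\mathfrak{m})$ symmetric, so the parenthetical ``when $c$ is invertible; the general case by transvections'' is a red herring --- the lower-left block of $\omega^{-1}g$ is always $-I$ and no reduction to generators is needed. Moreover, the cocycle factors you describe as arising from each step of the factorisation of $\omega^{-1}g$ in fact all vanish (they are cocycles against elements of $P(\mathcal{X}^*)$); the single factor $c_{PR,\mathcal{X}^*}(\omega,\omega^{-1}g)$ appears only at the final step when you compose with $\Pi_\Psi(\omega)$, which is precisely the cocycle axiom and is not something to be verified afterwards.

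The remaining substance of your plan --- redoing a Fubini/orthogonality-of-characters integral to evaluate the composite, and observing that $c\in M_m(\mathfrak{m})$ and the level of $\Psi$ keep the dyadic quadratic terms in the kernel of $\Psi$ --- is sound in principle, but it duplicates work that the paper has already done in Cases~1 and~3. The shorter route is to avoid the new integral entirely: once $g$ is conjugated into $N\cap\Gamma_1$, every factor acts by a formula of the shape $\Psi(q_\cdot(w))f(w\,\cdot)$ that the earlier cases establish, and the phases combine by $q$-multiplicativity into $\Psi(q_g(w))$ with argument $wg$.
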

\begin{proof}
For $g\in N^-\cap \Gamma_1$, $g=\omega g_1\omega^{-1}$, for some $g_1\in N\cap \Gamma_1 \subseteq P(\mathcal{X}^{\ast})$. Then:
\begin{equation}\label{eqcc}
\begin{split}
c_{PR, \mathcal{X}^{\ast}}(\omega,g_1\omega^{-1})\Pi_{\Psi}(g)f(w)&=c_{PR, \mathcal{X}^{\ast}}(\omega,g_1\omega^{-1})\Pi_{\Psi}(\omega g_1\omega^{-1})f(w)=\Pi_{\Psi}(\omega)\Pi_{\Psi}( g_1\omega^{-1})f(w)\\
&=\Pi_{\Psi}(\omega)\Pi_{\Psi}( g_1) \Pi_{\Psi}(\omega^{-1})f(w)=\Psi(q_g(w))f(wg).
\end{split}
\end{equation}
\end{proof}
\begin{remark}
  $c_{PR, \mathcal{X}^{\ast}}(\omega,\omega^{-1}g)=c_{PR, \mathcal{X}^{\ast}}(\omega,g)^{-1}$.
\end{remark}
\begin{proof}
Note that $\omega^{-1}=-\omega$. By \cite[p.359]{Ra}, $c_{PR, \mathcal{X}^{\ast}}(\omega,\omega^{-1})=1=c_{PR, \mathcal{X}^{\ast}}(\omega,\omega)$. By the equality:
$c_{PR, \mathcal{X}^{\ast}}(\omega,\omega^{-1}g)c_{PR, \mathcal{X}^{\ast}}(\omega^{-1},g)=c_{PR, \mathcal{X}^{\ast}}(\omega,\omega^{-1})c_{PR, \mathcal{X}^{\ast}}(1,g)=1$, we obtain $c_{PR, \mathcal{X}^{\ast}}(\omega,\omega^{-1}g)=c_{PR, \mathcal{X}^{\ast}}(\omega^{-1},g)^{-1}=c_{PR, \mathcal{X}^{\ast}}(\omega,g)^{-1}$.
\end{proof}
For any $g\in \Gamma_1$, let us write $g=g_1g_2$, for some unique $g_1\in N^-\cap \Gamma_1$,  $g_2\in B\cap \Gamma_1$. Let us define a function
$$t: \Gamma_1 \longrightarrow \mu_8; g \longmapsto c_{PR, \mathcal{X}^{\ast}}(\omega,\omega^{-1}g_1)=c_{PR, \mathcal{X}^{\ast}}(\omega,\omega^{-1}g).$$  Let us modify the action of $\Gamma_1$  by defining
$$\Pi_{\Psi}^{\sim}(g)f=\Pi_{\Psi}(g) t(g)f, \qquad f\in S_{\mathcal{L}}.$$
Let $c'_{PR, \mathcal{X}^{\ast}}(g,g')=c_{PR, \mathcal{X}^{\ast}} (g,g') t(g)t(g') t(gg')^{-1}$, for $g,g'\in \Gamma_1$. It is clear that $\Pi_{\Psi}^{\sim}(g)\Pi_{\Psi}^{\sim}(g')=c'_{PR, \mathcal{X}^{\ast}}(g,g') \Pi_{\Psi}^{\sim}(gg')$.
\begin{lemma}\label{Gamma}
\begin{itemize}
\item[(1)] $\Pi_{\Psi}^{\sim}(g)f(w)=\Psi(q_g(w))f(wg)$, for $g\in \Gamma_1$.
\item[(2)] $c'_{PR, \mathcal{X}^{\ast}}|_{\Gamma_1\times \Gamma_1}=1$.
\end{itemize}
\end{lemma}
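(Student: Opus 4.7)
The plan is to exploit the Iwahori decomposition $\Gamma_1 = (N^-\cap\Gamma_1)(B\cap\Gamma_1)$ recalled in \ref{Iwode}, glue together the formulas from Case~1, Case~2 and Lemma~\ref{coco} using the $1$-cocycle identity
$$q_{g_1 g_2}(w) = q_{g_1}(w) + q_{g_2}(w g_1)$$
that comes directly from the multiplication rule $(g,q)(g',q')=(gg',q'')$ in $\Ps(\mathscr{W})$, and then deduce (2) from (1) almost for free.

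For part (1), write $g = g_1 g_2$ with $g_1\in N^-\cap\Gamma_1$ and $g_2 \in B\cap\Gamma_1$. The element $g_2$ decomposes further as a unipotent $\begin{pmatrix} 1 & b \\ 0 & 1\end{pmatrix}$ with $b\in M_m(\mathfrak{m})$ composed with an $h_a$ satisfying $a\equiv 1\pmod{\mathfrak{m}}$, so Cases~1 and~2 apply directly and give $\Pi_\Psi(g_2)f(w) = \Psi(q_{g_2}(w))f(wg_2)$ without cocycle twist. Plugging $F=\Pi_\Psi(g_2)f$ into Lemma~\ref{coco} and applying the $1$-cocycle identity yields
$$c_{PR,\mathcal{X}^{\ast}}(\omega,\omega^{-1}g_1)\,[\Pi_\Psi(g_1)\Pi_\Psi(g_2)f](w)=\Psi(q_g(w))f(wg).$$
Rewriting the bracket via $\Pi_\Psi(g_1)\Pi_\Psi(g_2)=c_{PR,\mathcal{X}^{\ast}}(g_1,g_2)\Pi_\Psi(g)$ and multiplying through by $t(g)=c_{PR,\mathcal{X}^{\ast}}(\omega,\omega^{-1}g_1)$ would then give (1), once we verify two things: that $t(g)=c_{PR,\mathcal{X}^{\ast}}(\omega,\omega^{-1}g_1)$ actually agrees with $c_{PR,\mathcal{X}^{\ast}}(\omega,\omega^{-1}g)$, so $t$ depends only on $g$ and not on the factorization, and that $c_{PR,\mathcal{X}^{\ast}}(g_1,g_2)=1$.

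Both vanishing statements rest on the same fact: since $B\cap\Gamma_1\subseteq P(\mathcal{X}^{\ast})$, the element $g_2$ stabilizes $\mathcal{X}^{\ast}$, so the Maslov/Leray-type expression defining $c_{PR,\mathcal{X}^{\ast}}(-,g_2)$ involves three Lagrangians, two of which coincide, and therefore collapses to $1\in\mu_8$. This gives $c_{PR,\mathcal{X}^{\ast}}(g_1,g_2)=1$ directly, and the $2$-cocycle identity applied to the triple $(\omega,\omega^{-1}g_1,g_2)$ together with $c_{PR,\mathcal{X}^{\ast}}(\omega^{-1}g_1,g_2)=1$ gives the intrinsic compatibility $c_{PR,\mathcal{X}^{\ast}}(\omega,\omega^{-1}g_1)=c_{PR,\mathcal{X}^{\ast}}(\omega,\omega^{-1}g)$. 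This is the main technical step; I would verify it by unpacking Rao's explicit formula for $c_{PR,\mathcal{X}^{\ast}}$ relative to $\mathcal{X}^{\ast}$.

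Part (2) then comes for free from (1). Once $\Pi_\Psi^{\sim}(g)f(w)=\Psi(q_g(w))f(wg)$ is established for all $g\in\Gamma_1$, the fact that $\alpha\colon \Sp(\mathscr{W})\to\Ps(\mathscr{W})$ is a genuine group homomorphism and the ensuing relation $q_{gg'}(w)=q_g(w)+q_{g'}(wg)$ force
$$\Pi_\Psi^{\sim}(g)\Pi_\Psi^{\sim}(g')f(w)=\Psi(q_g(w))\Psi(q_{g'}(wg))f(wgg')=\Psi(q_{gg'}(w))f(wgg')=\Pi_\Psi^{\sim}(gg')f(w)$$
on the nose for $g,g'\in\Gamma_1$. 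Comparing with $\Pi_\Psi^{\sim}(g)\Pi_\Psi^{\sim}(g')=c'_{PR,\mathcal{X}^{\ast}}(g,g')\Pi_\Psi^{\sim}(gg')$ shows $c'_{PR,\mathcal{X}^{\ast}}|_{\Gamma_1\times\Gamma_1}=1$. The hard part is thus entirely contained in the vanishing step for the Perrin-Rao cocycle when the second argument lies in the Siegel parabolic stabilizer of $\mathcal{X}^{\ast}$; the rest of the argument is cocycle bookkeeping.
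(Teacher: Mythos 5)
Your proposal follows the same route as the paper: Iwahori-decompose $g=g_1g_2$ with $g_1\in N^-\cap\Gamma_1$, $g_2\in B\cap\Gamma_1$, handle $g_2$ via Cases 1 and 2, handle $g_1$ via Lemma \ref{coco}, glue using $q_{g_1g_2}(w)=q_{g_1}(w)+q_{g_2}(wg_1)$, and obtain part (2) from part (1) by multiplicativity of the explicit formula. The paper's proof is terser — it simply asserts $c_{PR,\mathcal{X}^{\ast}}(g_1,g_2)=1$ and $c_{PR,\mathcal{X}^{\ast}}(\omega,\omega^{-1}g_1)=c_{PR,\mathcal{X}^{\ast}}(\omega,\omega^{-1}g)$ without comment — whereas you correctly flag these as the two points needing justification and supply the right reason (the Leray invariant $q(\mathcal{X}^{\ast},\mathcal{X}^{\ast}g_2^{-1},\mathcal{X}^{\ast}g_1)$ degenerates since $g_2\in P(\mathcal{X}^{\ast})$ fixes $\mathcal{X}^{\ast}$, and the 2-cocycle identity on the triple $(\omega,\omega^{-1}g_1,g_2)$ then yields the well-definedness of $t$); this is the intended mechanism, so the proposal is correct.
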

\begin{proof}
By the above formulas, for an element $g\in \Gamma_1$, $g=g_1g_2$, for some $g_1\in N^-\cap \Gamma_1$, $g_2\in B\cap \Gamma_1$. So
$\Pi_{\Psi}(g)f(w)=c_{PR, \mathcal{X}^{\ast}}(g_1,g_2) \Pi_{\Psi}(g_1)\Pi_{\Psi}(g_2)f(w)=\Pi_{\Psi}(g_1)\Pi_{\Psi}(g_2) f(w)=c_{PR, \mathcal{X}^{\ast}}(\omega,\omega^{-1}g_1)^{-1}\Psi(q_g(w))f(wg)=c_{PR, \mathcal{X}^{\ast}}(\omega,\omega^{-1}g)^{-1}\Psi(q_g(w))f(wg)$. Hence $\Pi_{\Psi}^{\sim}(g)f(w)=\Psi(q_g(w))f(wg)$. Note that $(2)$ is a consequence of (1).
\end{proof}

\subsection{ $\Sp(\mathcal{L}^{\tfrac{1}{2}})$.}
Let $\mathcal{L}^{\tfrac{1}{2}}=\Span_{\mathfrak{O}}\{ e_1, \cdots, e_m; 2e_1^{\ast}, \cdots, 2e_m^{\ast}\}=\tfrac{1}{2}\mathcal{L}\cap \mathcal{X}\oplus \mathcal{X}^{\ast}\cap \mathcal{L}$. Let $V_{1}$ be the subspace  of functions   supported on $\mathcal{L}^{\tfrac{1}{2}}$. Under the basis $\{ e_1,\cdots, e_m; e_1^{\ast},\cdots, e_m^{\ast}\}$ of $\mathscr{W}$, $\Sp(  \mathcal{L}^{\tfrac{1}{2}})\simeq \begin{pmatrix}
  1&0\\
  0 & \tfrac{1}{2}
\end{pmatrix}\Sp_{2m}(\mathfrak{O}) \begin{pmatrix}
  1&0\\
  0 &2
\end{pmatrix}$, which can be generated by $h_{a}=\begin{pmatrix}
  a& 0\\
   0& (a^{\ast})^{-1}\end{pmatrix}$, $u(b)=\begin{pmatrix}
  1&b\\
   0& 1
\end{pmatrix} $, $h_{(S,1/2)}\omega_S$, where   $ h_{(S, 1/2)}=\begin{pmatrix}
  a_2&\\
   & (a_2^{\ast})^{-1}
\end{pmatrix} \in  \Sp(\mathscr{W})$, with $a_2= \begin{pmatrix}
   21_S&\\
   & 1_{S'}
\end{pmatrix} $, $(a_2^{\ast})^{-1}=\begin{pmatrix}
  \tfrac{1}{2}1_S&\\
   &1_{S'}
\end{pmatrix} $, $S\subseteq \{1,\cdots, m\}$, $S'=\{1,\cdots, m\} \setminus S$; $(e_i) 21_S=2e_i$, $(e_i^{\ast}) \tfrac{1}{2}1_S=\tfrac{1}{2}e_i^{\ast}$, for $i\in S$. Let $f\in V_{1}$. By the formulas in the above cases 1 and 2, $\Pi_{\Psi}(g)f(w)=\Psi(q_g(w)) f(wg)$, for $g=h_a$, $u(b)\in \Sp(  \mathcal{L}^{\tfrac{1}{2}})$.

Case 5: $g=h_{(S,1/2)}\omega_S \in \Sp(\mathcal{L}^{\tfrac{1}{2}})$.
\begin{equation}\label{eq5}
\begin{split}
\Pi_{\Psi}(g)f(w)&
=\int_{\mathcal{L}\cap \mathcal{X}} [\Pi_{\Psi}(g)f'](x+l) \Psi( B( l, w)) dl\\
&=\int_{\mathcal{L}\cap \mathcal{X}} |det_{\mathcal{X}}(a_2)|^{\tfrac{1}{2}} [\Pi_{\Psi}(\omega_S)f']((x+l)a_2) \Psi( B( l, w)) dl\\
&=\int_{\mathcal{L}\cap \mathcal{X}}\int_{\mathcal{X}_S^{\ast}} |det_{\mathcal{X}}(a_2)|^{\tfrac{1}{2}} \Psi(B( 2x_s+2l_s,  z_s^{\ast})) f'(x_{s'}\omega_S+l_{s'}\omega_S+z_s^{\ast}\omega^{-1}_S) \Psi( B( l, w))dz_s^{\ast} dl\\
&= \int_{\mathcal{X}_S^{\ast}}   \int_{\mathcal{X}^{\ast}/\mathcal{X}^{\ast}\cap \mathcal{L}} |det_{\mathcal{X}}(a_2)|^{\tfrac{1}{2}}f(x_{s'}\omega_S+z_s^{\ast}\omega^{-1}_S+\dot{y}^{\ast})\Psi(B( 2x_s,  z_s^{\ast})) \\ &\qquad\qquad\qquad\qquad\qquad\cdot[\int_{\mathcal{L}\cap \mathcal{X}}\Psi(-B(l_{s'}\omega_S, \dot{y}^{\ast}))  \Psi( B( l, w))\Psi(B( 2l_s,  z_s^{\ast}))dl] d\dot{y}^{\ast} dz_s^{\ast}.
\end{split}
\end{equation}
\begin{equation}
\begin{split}\label{eqq61}
&\int_{\mathcal{L}\cap \mathcal{X}}\Psi(-B(l_{s'}\omega_S, \dot{y}^{\ast}))  \Psi( B( l, w))\Psi(B(2l_s,  z_s^{\ast}))dl\\
&=\int_{\mathcal{L}\cap \mathcal{X}_S}  \Psi( B( l_s, x_{s}^{\ast}+ 2z_s^{\ast}))dl_s   \int_{\mathcal{L}\cap \mathcal{X}_{S'}}  \Psi(B(l_{s'}, -\dot{y}_{s'}^{\ast}+x_{s'}^{\ast}) dl_{s'},
\end{split}
\end{equation}
which is non-zero iff $x_s^{\ast}+2z_s^{\ast}\in \mathcal{L}\cap  \mathcal{X}_S^{\ast}$, and $x_{s'}^{\ast}-\dot{y}_{s'}^{\ast} \in \mathcal{L}\cap  \mathcal{X}_{S'}^{\ast}$. Moreover, $\int_{\mathcal{L}\cap \mathcal{X}_S}  dl_s=\int_{\mathcal{L}\cap \mathcal{X}_{S'}} dl_{s'}=1$.
\begin{gather}
\begin{split}
(\ref{eqq61})&=\int_{\tfrac{1}{2}\mathcal{L}\cap \mathcal{X}_S^{\ast}}  \int_{\mathcal{X}_S^{\ast}/\mathcal{X}_S^{\ast}\cap \mathcal{L}}    |F|^{-\frac{|S|}{2}}f(x_{s'}\omega_S+(-\tfrac{1}{2}x_{s}^{\ast}+z_s^{\ast})\omega^{-1}_S+\dot{y}_s^{\ast}+ x_{s'}^{\ast})\Psi(B(2x_s,  -\tfrac{1}{2}x_{s}^{\ast}+z_s^{\ast}))dz_s^{\ast}d\dot{y}_s^{\ast}\\
&=\Psi(B( 2x_s,  -\tfrac{1}{2}x_{s}^{\ast}))\int_{\mathcal{L}\cap \mathcal{X}_S^{\ast}}   \int_{\mathcal{X}_S^{\ast}/\mathcal{X}_S^{\ast}\cap \mathcal{L}}    |F|^{\frac{|S|}{2}}f(x_{s'}\omega_S+\tfrac{1}{2}x_{s}^{\ast}\omega_S+\tfrac{1}{2}z_s^{\ast}\omega^{-1}_S+\dot{y}_s^{\ast}+ x_{s'}^{\ast}\omega_S)\Psi(B(2 x_s,  \tfrac{1}{2}z_s^{\ast}))dz_s^{\ast}d\dot{y}_s^{\ast}\\
&=\Psi(B( 2x_s,  -\tfrac{1}{2}x_{s}^{\ast}))\int_{\mathcal{L}\cap \mathcal{X}^{\ast}_S}   \int_{\mathcal{X}_S^{\ast}/\mathcal{X}_S^{\ast}\cap \mathcal{L}}    |F|^{\frac{|S|}{2}}f(x_{s'}\omega_S+\tfrac{1}{2}z_s^{\ast}\omega^{-1}_S+\dot{y}_s^{\ast}+x^{\ast}g)\Psi(B(2 x_s ,  \tfrac{1}{2}z_s^{\ast}))dz_s^{\ast}d\dot{y}_s^{\ast}\\
&=\Psi(B( 2x_s,  -\tfrac{1}{2}x_{s}^{\ast}))\int_{\mathcal{L}\cap \mathcal{X}_S}   \int_{\mathcal{X}_S^{\ast}/\mathcal{X}_S^{\ast}\cap \mathcal{L}}    |F|^{\frac{|S|}{2}}f(x_{s'}\omega_S+\tfrac{1}{2}l_s+\dot{y}_s^{\ast}+x^{\ast}g) \Psi(B(\tfrac{1}{2}l_s, 2x_s \omega_S ))dl_sd\dot{y}_s^{\ast}\\
&=\Psi(B( 2x_s,  -\tfrac{1}{2}x_{s}^{\ast}))\int_{\tfrac{1}{2}\mathcal{L}\cap \mathcal{X}_S}   \int_{\mathcal{X}_S^{\ast}/\mathcal{X}_S^{\ast}\cap \mathcal{L}}    f(x_{s'}\omega_S+l_s+\dot{y}_s^{\ast}+x^{\ast}g) \Psi(B(l_s, 2x_s \omega_S ))dl_sd\dot{y}_s^{\ast}
\end{split}
\end{gather}
\begin{gather}
\begin{split}
&=\Psi(B( 2x_s,  -\tfrac{1}{2}x_{s}^{\ast}))\sum_{x_i\in \frac{1}{2}\mathcal{L}\cap \mathcal{X}_{S}/\mathcal{L}\cap \mathcal{X}_{S}}\int_{\mathcal{L}\cap \mathcal{X}_S}   \int_{\mathcal{X}_S^{\ast}/\mathcal{X}_S^{\ast}\cap \mathcal{L}}    f(x_{s'}\omega_S+x_i+l_s+\dot{y}_s^{\ast}+x^{\ast}g) \\
&\qquad\qquad\qquad\qquad\qquad\qquad\qquad\qquad\qquad\qquad\qquad\qquad\qquad\qquad\qquad\cdot\Psi(B(x_i+l_s, 2x_s \omega_S ))dl_sd\dot{y}_s^{\ast}\\
&=\Psi(B( 2x_s,  -\tfrac{1}{2}x_{s}^{\ast}))\sum_{x_i\in \frac{1}{2}\mathcal{L}\cap \mathcal{X}_{S}/\mathcal{L}\cap \mathcal{X}_{S}}\Psi(B(x_i, 2x_s \omega_S ))\int_{\mathcal{L}\cap \mathcal{X}_S}   \int_{\mathcal{X}_S^{\ast}/\mathcal{X}_S^{\ast}\cap \mathcal{L}}    f(x_{s'}\omega_S+x_i+l_s+\dot{y}_s^{\ast}+x^{\ast}g) \\
&\qquad\qquad\qquad\qquad\qquad\qquad\qquad\qquad\qquad\qquad\qquad\qquad\qquad\qquad\qquad\cdot\Psi(B(l_s, 2x_s \omega_S ))dl_sd\dot{y}_s^{\ast}\\
&=\sum_{x_i\in \frac{1}{2}\mathcal{L}\cap \mathcal{X}_{S}/\mathcal{L}\cap \mathcal{X}_{S}}\Psi(B(x_i, 2x_s \omega_S ))   \Psi(q_g(w))f(x_i+wg).
\end{split}
\end{gather}
By the above formulas, we have:
\begin{lemma}\label{cc2}
$V_{1}$ is $\Sp( \mathcal{L}^{\tfrac{1}{2}})$-stable.
\end{lemma}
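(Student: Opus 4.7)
The plan is to reduce the stability statement to a check on a generating set of $\Sp(\mathcal{L}^{\frac{1}{2}})$ and then invoke the explicit formulas derived in Cases $1$, $2$ and $5$ immediately above. Recall that, as observed just before Case $5$, the group $\Sp(\mathcal{L}^{\frac{1}{2}})$ is generated by the elements $h_a$, $u(b)$ and $h_{(S,1/2)}\omega_S$ for $S\subseteq\{1,\ldots,m\}$. Since $V_{1}$ is a subspace and $\Pi_{\Psi}$ is a group action, it suffices to show that $\Pi_{\Psi}(g)V_{1}\subseteq V_{1}$ for each such generator.

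For $g=h_a$ and $g=u(b)$, the computations in Case $1$ and Case $2$ give the point-wise formula $\Pi_{\Psi}(g)f(w)=\Psi(q_g(w))f(wg)$. If $f$ is supported on $\mathcal{L}^{\frac{1}{2}}$, the support of $\Pi_{\Psi}(g)f$ is contained in $\mathcal{L}^{\frac{1}{2}}g^{-1}$; because $g\in\Sp(\mathcal{L}^{\frac{1}{2}})$ preserves $\mathcal{L}^{\frac{1}{2}}$, this set equals $\mathcal{L}^{\frac{1}{2}}$, and stability is immediate.

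For $g=h_{(S,1/2)}\omega_S$, the computation in Case $5$ yields the closed form
\[
\Pi_{\Psi}(g)f(w)=\Psi(q_g(w))\sum_{x_i\in \frac{1}{2}\mathcal{L}\cap \mathcal{X}_{S}/\mathcal{L}\cap \mathcal{X}_{S}}\Psi(B(x_i, 2x_s \omega_S))f(x_i+wg).
\]
I would argue support preservation as follows: each coset representative $x_i$ lies in $\tfrac{1}{2}\mathcal{L}\cap\mathcal{X}_S\subseteq\mathcal{L}^{\frac{1}{2}}$, so the condition $f(x_i+wg)\neq 0$ together with the supposition $\supp f\subseteq\mathcal{L}^{\frac{1}{2}}$ forces $wg\in\mathcal{L}^{\frac{1}{2}}-x_i\subseteq\mathcal{L}^{\frac{1}{2}}$, whence $w\in\mathcal{L}^{\frac{1}{2}}g^{-1}=\mathcal{L}^{\frac{1}{2}}$ by the choice of $g$. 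Consequently $\Pi_{\Psi}(g)f$ vanishes off $\mathcal{L}^{\frac{1}{2}}$ and belongs to $V_{1}$.

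The main (only) obstacle is the third case: unlike the first two, the action of $h_{(S,1/2)}\omega_S$ is not a simple translation of $w$, and one must use the inclusion $\tfrac{1}{2}\mathcal{L}\cap\mathcal{X}_S\subseteq\mathcal{L}^{\frac{1}{2}}$ (which ultimately encodes why $\mathcal{L}^{\frac{1}{2}}$, rather than $\mathcal{L}$, is the correct lattice to accommodate the extra generators $h_{(S,1/2)}\omega_S$ that distinguish $\Sp(\mathcal{L}^{\frac{1}{2}})$ from $\Sp(\mathcal{L})$). Once this inclusion and the lattice-preservation property $\mathcal{L}^{\frac{1}{2}}g=\mathcal{L}^{\frac{1}{2}}$ are in hand, the argument in this case is the same bookkeeping as in the first two.
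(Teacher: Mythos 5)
Your proposal is correct and takes essentially the same route as the paper, which disposes of the lemma with the terse phrase ``by the above formulas'': you reduce to the listed generators $h_a$, $u(b)$, $h_{(S,1/2)}\omega_S$ of $\Sp(\mathcal{L}^{1/2})$ and read support-preservation directly off the explicit formulas in Cases 1, 2 and 5. The only genuine content, which you identify correctly, is that in Case 5 the shifts $x_i$ ranging over $\tfrac{1}{2}\mathcal{L}\cap\mathcal{X}_S/\mathcal{L}\cap\mathcal{X}_S$ all lie in $\mathcal{L}^{1/2}$, so together with $\mathcal{L}^{1/2}g=\mathcal{L}^{1/2}$ the support of $\Pi_\Psi(g)f$ stays inside $\mathcal{L}^{1/2}$.
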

\subsection{Modifying by $2$}\label{Modifyingby}
Let $(\mathscr{W}, 2\langle, \rangle)$ be another symplectic vector space over $K$.  Let $\langle, \rangle'= 2\langle, \rangle$, and  $B'(-,-)=2B(-,-)$.  For $g\in \Sp(\mathscr{W})$, let $\Sigma'_g$  be the set  of continuous functions $q'$ from $\mathscr{W}$ to $K$ such that
 \begin{equation}\label{equiv2}
 q'(w+w')-q'(w)-q' (w')=B'(wg, w'g)-B'(w, w')=2B(wg,  w'g)-2B(w, w'), \quad  w, w'\in \mathscr{W}.
 \end{equation}
 Let $P's(\mathscr{W})=\{(g, q')\mid g\in \Sp(\mathscr{W}), q'\in \Sigma'_g\}$. Let $e'_i=e_i$, $e_i^{\ast'}=\frac{1}{2} e_i^{\ast}$. Then $\{ e_1', \cdots, e'_m;  e_1^{\ast'}, \cdots,  e_m^{\ast'}\}$ forms a symplectic basis of $(W, \langle, \rangle')$.  Note that $\mathcal{X}'=\Span_{K}\{e_1', \cdots, e_m'\}=\mathcal{X}$, $\mathcal{X}^{\ast'}=\Span_K\{e_1^{\ast'}, \cdots, e_m^{\ast'}\}=\mathcal{X}^{\ast}$.

  Let $\mathcal{L}^{'}=\Span_{\mathfrak{O}_K}\{ 2e'_1, \cdots, 2e'_m;  2e_1^{\ast'}, \cdots,  2e_m^{\ast'}\}=\Span_{\mathfrak{O}_K}\{ 2e_1, \cdots, 2e_m;  e_1^{\ast}, \cdots,  e_m^{\ast}\}$,  which is a self-dual lattice of  $(\mathscr{W}, 2\langle, \rangle)$ with respect to $\Psi$.

  Let $H_{B'}(\mathcal{L}')=\mathcal{L}'\times K$ denote   the corresponding Heisenberg subgroup of $H_{B'}(\mathscr{W})$. Let $\Psi_{\mathcal{L}'}$ be the extended  character of $H_{B'}(\mathcal{L}')$ defined as: $(l, t) \longmapsto \Psi(t)$, for $l\in \mathcal{L}'$, $t \in K$. Then  $(\Pi'_{\Psi}=\cInd_{H_{B'}(\mathcal{L}')}^{H_{B'}(\mathscr{W})} \Psi_{\mathcal{L}'},  S_{\mathcal{L}^{'}}=\cInd_{H_{B'}(\mathcal{L}')}^{H_{B'}(\mathscr{W})} \C)$ defines a Heisenberg representation of $H_{B'}(\mathscr{W})$  in this case. By Weil's result, $\Pi'_{\Psi}$ can extend to $\overline{\Sp(\mathscr{W})}$ and $\mathscr{W}^{\vee}$.
\begin{lemma}
There exist the following isomorphisms:
\begin{itemize}
\item[(1)] $\alpha_{2}:    \Sp(\mathscr{W})\ltimes H_{B}(\mathscr{W}) \longrightarrow  \Sp(\mathscr{W}) \ltimes H_{B'}(\mathscr{W}),
 \quad  [g,(x,x^{\ast}; t)]  \longmapsto [g, (x, x^{\ast};2t)]$;
\item[(2)] $\alpha_{2}:    \Ps(\mathscr{W}) \longrightarrow \Ps'(\mathscr{W}),
 \quad  [g,q]  \longmapsto [g,2q]$.
\end{itemize}
\end{lemma}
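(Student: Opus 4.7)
The plan is a direct verification that scaling the center coordinate by $2$ intertwines every relevant structure, the single substantive ingredient being the identity $B' = 2B$ together with the fact that $K$ has characteristic zero (so multiplication by $2$ on $K$ is bijective).

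I would first dispatch (2). Given $q \in \Sigma_g$, the relation
\[
(2q)(w+w')-(2q)(w)-(2q)(w') \;=\; 2\bigl[B(wg,w'g)-B(w,w')\bigr] \;=\; B'(wg,w'g)-B'(w,w')
\]
shows $2q \in \Sigma'_g$, and the inverse map $q' \mapsto q'/2$ shows $\alpha_2$ is a bijection on each fiber. For the group law, both $\Ps(\mathscr{W})$ and $\Ps'(\mathscr{W})$ use the same formula $q''(w) = q(w)+q'(wg)$, so
\[
\alpha_2\bigl((g,q)(g',q')\bigr) \;=\; (gg',\,2q + 2q'\circ g) \;=\; \alpha_2(g,q)\,\alpha_2(g',q')
\]
is immediate, proving (2).

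For (1), I would treat the Heisenberg part and the $\Sp$-part separately. On the Heisenberg side,
\[
\alpha_2\bigl((w,t)+(w',t')\bigr) \;=\; (w+w',\,2t+2t'+2B(w,w')) \;=\; (w,2t)+(w',2t')
\]
in $H_{B'}(\mathscr{W})$, where the last equality uses $B' = 2B$; this is a bijective homomorphism. For the semidirect structure I would invoke the splitting $\alpha$ of Lemma \ref{split}: the $\Sp$-action on $H_B(\mathscr{W})$ is $(w,t)\cdot g = (wg,\,t + q_g(w))$, and the analogous action on $H_{B'}(\mathscr{W})$ uses $q'_g = 2q_g$, which lies in $\Sigma'_g$ by part (2). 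Then
\[
\alpha_2\bigl(wg,\,t+q_g(w)\bigr) \;=\; \bigl(wg,\,2t+2q_g(w)\bigr) \;=\; \bigl(wg,\,2t+q'_g(w)\bigr),
\]
which is exactly the $B'$-side action of $g$ on $(w,2t)$; thus $\alpha_2$ intertwines the semidirect multiplications.

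The only possible subtlety, and the one I would flag as the main point to confirm before writing up, is which $\Sp$-action on $H_B(\mathscr{W})$ is meant in (1); in keeping with the earlier isomorphism $\alpha_B$ and Lemma \ref{split}, I read it as the action induced by $\alpha$. Under this convention the entire lemma is a mechanical check, with no cocycle mismatches because the identification $2\Sigma_g = \Sigma'_g$ is forced by the defining relation.
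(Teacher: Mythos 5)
Your proof is correct and takes the same approach the paper implicitly has in mind — the paper simply writes ``Straightforward'' here, and your write-up is precisely the routine verification that remark gestures at: the identity $B'=2B$ turns the defining relation of $\Sigma_g$ into that of $\Sigma'_g$, and the doubling map on the centre intertwines both the Heisenberg group laws and the $\Sp$-actions because $q'_g = 2q_g$. Your flagged subtlety about which $\Sp$-action is meant in (1) is resolved correctly by reading it through $\alpha$ and the section $g \mapsto (g,q_g)$, as in the earlier $\alpha_B$ isomorphism.
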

\begin{proof}
Straightforward.
\end{proof}
\subsubsection{Special lattices} Let $$\mathcal{L}^{'\frac{1}{2}}=\Span_{\mathfrak{O}_K}\{ e'_1, \cdots, e'_m; 2 e_1^{\ast'}, \cdots,  2e_m^{\ast'}\}=\Span_{\mathfrak{O}_K}\{ e_1, \cdots, e_m;  e_1^{\ast}, \cdots,  e_m^{\ast}\}.$$   Let $V_{\psi}$ be the  subspace of $S_{\mathcal{L}'}$ of functions supported on $\mathcal{L}^{'\frac{1}{2}}$.  For any $f\in V_{\psi}$, $w\in \mathcal{L}^{'\frac{1}{2}}$, $a\in \mathcal{L}'$,
 $$f(a+w)=\Psi(-B'( a, w))f(w)=\Psi(-2B( a, w))f(w)=f(w).$$ So $f$ is indeed a function on $\mathcal{L}^{'\frac{1}{2}}/\mathcal{L}'$.  By observation, $\dim V_{\psi}= 2^{dm}$, $2^{d}=|F|$.

  Let us   treat $\mathcal{L}^{'\frac{1}{2}}$, $\mathcal{L}$  also as   two  free  modules over $\mathfrak{O}$ and $\mathfrak{m}$ respectively  under the basis $\{ e_1', \cdots, e'_m;  2e_1^{\ast'}, \cdots,  2e_m^{\ast'}\}$, endowed with the symplectic form $\langle, \rangle'$, and  written  by $\mathscr{W}_{ \mathfrak{O}}$ and $\mathscr{W}_{ \mathfrak{m}}$ respectively.
  Let $H_{B'}(\mathscr{W}_{ \mathfrak{O}})=\mathscr{W}_{ \mathfrak{O}} \times \mathfrak{O}$, $H_{B'}(\mathscr{W}_{\mathfrak{m}})=\mathscr{W}_{ \mathfrak{m}} \times \mathfrak{m}^{2}$ be the corresponding  subgroups  of $H_{B'}(\mathscr{W})$. Let  $R=  \mathfrak{O}/ \mathfrak{m}^2$.
 \begin{lemma}\label{HH}
 \begin{itemize}
 \item[(1)] $H_{B'}(\mathscr{W}_{ \mathfrak{m}})$ is a normal subgroup of $H_{B'}(\mathscr{W}_{ \mathfrak{O}})$.
 \item[(2)] The quotient  $H_{B'}(\mathscr{W}_{ \mathfrak{O}})/H_{B'}(\mathscr{W}_{ \mathfrak{m}})$ is isomorphic with $H_{\beta}(W)$, given in Section \ref{GLGU}.
 \item[(3)] $H_{B'}(\mathscr{W}_{ \mathfrak{m}})$ acts trivially on $V_{\psi}$.
 \item[(4)] $H_{B'}(\mathscr{W}_{ \mathfrak{O}})$ acts stably on $V_{\psi}$.
 \end{itemize}
 \end{lemma}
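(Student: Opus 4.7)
The plan is to verify all four assertions by direct calculation, leaning on the group law on $H_{B'}(\mathscr{W})$, the action formula $(\ref{BB'})$, and two structural inputs: the identity $B' = 2B$, which forces $B'(u,w) \in \mathfrak{m}^2$ whenever $u \in \mathscr{W}_{\mathfrak{O}}$ and $w \in \mathscr{W}_{\mathfrak{m}}$; and the fact that $\Psi$ has level $\mathfrak{m}^2$, so it kills everything in $\mathfrak{m}^2$. These two inputs will together do essentially all the work.

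For (1), I first check closure of $H_{B'}(\mathscr{W}_{\mathfrak{m}})$: if $w_1, w_2 \in \mathscr{W}_{\mathfrak{m}} = \mathcal{L}$, writing them in the basis $\{2e_i, 2e_i^{\ast}\}$ gives $B(w_1, w_2) \in \mathfrak{m}^2$, hence $B'(w_1, w_2) \in \mathfrak{m}^3 \subset \mathfrak{m}^2$. For normality, a direct expansion in $H_{B'}(\mathscr{W}_{\mathfrak{O}})$ yields
\[
(u, s)(w, t)(u, s)^{-1} = \bigl(w,\ t + B'(u, w) - B'(w, u)\bigr),
\]
and the $\mathfrak{m}^2$-bound above shows that the correction term lives in $\mathfrak{m}^2$, so the whole element lies in $H_{B'}(\mathscr{W}_{\mathfrak{m}})$.

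For (2), the same $\mathfrak{m}^2$-bound shows that $B'$ descends, modulo $\mathfrak{m}^2$, to a well-defined bi-additive map $\beta \colon W \times W \to R$, where $W = \mathscr{W}_{\mathfrak{O}}/\mathscr{W}_{\mathfrak{m}} \cong F^{2m}$ is a symplectic vector space over $F$ and $R = \mathfrak{O}/\mathfrak{m}^2$. The quotient $H_{B'}(\mathscr{W}_{\mathfrak{O}})/H_{B'}(\mathscr{W}_{\mathfrak{m}})$, as a set $W \times R$, then inherits the group law $(\bar{w}, \bar{t})(\bar{w}', \bar{t}') = (\bar{w} + \bar{w}', \bar{t} + \bar{t}' + \beta(\bar{w}, \bar{w}'))$, which is by construction $H_{\beta}(W)$ as defined in Section~\ref{GLGU}. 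The main subtlety here is matching the specific $2$-cocycle and the form $\beta$ with the conventions of Section~\ref{GLGU}; this is the step I expect to require the most care.

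For (3), I apply $(\ref{BB'})$: for $(w', t) \in H_{B'}(\mathscr{W}_{\mathfrak{m}})$ and $u \in \operatorname{supp}(f) \subset \mathcal{L}^{'\frac{1}{2}}$, the prefactor is $\Psi(t + B'(u, w'))$; here $t \in \mathfrak{m}^2$ and $B'(u, w') \in \mathfrak{m}^2$, so this is $1$. For the translate, $w' \in \mathcal{L} \subset \mathcal{L}'$, so the defining relation of $S_{\mathcal{L}'}$ gives $f(u + w') = \Psi(-B'(w', u)) f(u) = f(u)$, again by the $\mathfrak{m}^2$-bound. Hence $\Pi'_{\Psi}(w', t) f = f$. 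Finally for (4), since $w' \in \mathscr{W}_{\mathfrak{O}} = \mathcal{L}^{'\frac{1}{2}}$, translation by $w'$ preserves $\mathcal{L}^{'\frac{1}{2}}$, so $\operatorname{supp}(\Pi'_{\Psi}(w', t) f) \subset \mathcal{L}^{'\frac{1}{2}}$ and $\Pi'_{\Psi}(w', t) f \in V_{\psi}$. Beyond the cocycle identification in (2), every step reduces to one of the two $\mathfrak{m}^2$-bounds.
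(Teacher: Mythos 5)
Your proof is correct and follows essentially the same route as the paper's: all four parts reduce to the two observations that $B'(u,w) \in \mathfrak{m}^2$ whenever one argument lies in $\mathscr{W}_{\mathfrak{m}}$ and that $\Psi$ kills $\mathfrak{m}^2$, combined with the conjugation formula $[w,t][w',t'][w,t]^{-1}=[w', t'+B'(w,w')-B'(w',w)]$ for (1)--(2) and the action formula $(\ref{BB'})$ for (3)--(4). The only difference is that you make the closure check in (1) and the $\mathfrak{m}^2$-estimates explicit where the paper leaves them implicit; that is a harmless (and slightly more complete) presentation of the same argument.
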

 \begin{proof}
 1) For $[w, t]\in H_{B'}(\mathscr{W}_{ \mathfrak{O}})$, $[w,t]^{-1}=[-w, -t+B'(w,w)]$. For $[w', t']\in H_{B'}(\mathscr{W}_{ \mathfrak{m}})$, $[w,t][w', t'][w,t]^{-1}=[w+w', t+t'+B'(w,w')][-w, -t+B'(w,w)]=[w', t'+B'(w,w')+B'(w,w)-B'(w+w',w)]=[w', t'+B'(w,w')-B'(w',w)]\in H_{B'}(\mathscr{W}_{ \mathfrak{m}})$.\\
 2) As sets, $H_{B'}(\mathscr{W}_{ \mathfrak{O}})/H_{B'}(\mathscr{W}_{ \mathfrak{m}})\simeq W \times R$.  For $[w, t], [w', t']\in H_{B'}(\mathscr{W}_{ \mathfrak{O}})$, let $[\overline{w}, \overline{t}],  [\overline{w}', \overline{t}']$ denote their images in $W \times R$. Then $[\overline{w}, \overline{t}] [\overline{w}', \overline{t'}]=[\overline{w+w'}, \overline{t}+\overline{t'}+ \overline{2B(w,w')}]=[\overline{w}+\overline{w'}, \overline{t}+\overline{t'}+ \beta(\overline{w},\overline{w'})]$.\\
 3) For $f\in V_{\psi}$, $h=[w',t']\in H_{B'}(\mathscr{W}_{ \mathfrak{m}})$, then $w'\in \mathcal{L}'$, $t'\in \mathfrak{m}^2$. Hence $\Pi'_{\Psi}(h) f=\Pi'_{\Psi}(t') \Pi'_{\Psi}(w') f=f$. \\
 4) If $f\in V_{\psi}$, $\supp f\subseteq \mathcal{L}^{'\frac{1}{2}}$. For $h=[w',t']\in H_{B'}(\mathscr{W}_{ \mathfrak{O}})$, $[\Pi'_{\psi}(h) f](w)=\Psi(t'+B'(w,w')) f(w+w')$, so $\supp [\Pi'_{\Psi}(h) f] \subseteq \mathcal{L}^{'\frac{1}{2}}$.
 \end{proof}
The restriction of $\Psi$ to $\mathfrak{O}$ defines a character of $R$, denoted by $\psi$. Then we also choose $\Psi$ such that  $\psi$ is a \emph{faithful} representation.
\begin{corollary}
The action of $H_{B'}(\mathscr{W}_{ \mathfrak{O}})/H_{B'}(\mathscr{W}_{ \mathfrak{m}})$ on  $V_{\psi}$ realizes the   Heisenberg representation of $H_{\beta}(W)$ attached to the  central character $\psi$.
\end{corollary}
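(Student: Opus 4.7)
The plan is to package together the four parts of Lemma \ref{HH} and conclude by a Stone--von Neumann-style uniqueness argument for the finite Heisenberg group $H_\beta(W)$. First, parts (3) and (4) of Lemma \ref{HH} say that $H_{B'}(\mathscr{W}_\mathfrak{O})$ preserves $V_\psi$ while its normal subgroup $H_{B'}(\mathscr{W}_\mathfrak{m})$ acts trivially. Hence $\Pi'_\Psi|_{H_{B'}(\mathscr{W}_\mathfrak{O})}$ descends through the quotient map to a genuine representation of $H_{B'}(\mathscr{W}_\mathfrak{O})/H_{B'}(\mathscr{W}_\mathfrak{m})$, and Lemma \ref{HH}(2) identifies this quotient with $H_\beta(W)$. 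So the action of $H_\beta(W)$ on $V_\psi$ is well-defined.

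Next I would verify the central character. The image of $\{0\}\times\mathfrak{O}$ in $H_\beta(W)$ is $\{0\}\times R$, which lies in the center. For $t\in\mathfrak{O}$ and $f\in V_\psi$, the formula $(\ref{BB'})$ gives $\Pi'_\Psi([0,t])f=\Psi(t)f=\psi(\bar t)f$. Thus the central character of the descended representation equals $\psi$, which was arranged to be faithful on $R$.

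It then remains to see that $V_\psi$ realises the \emph{unique} irreducible representation of $H_\beta(W)$ with this central character. For this I would compare dimensions. The quasi-periodicity $f(a+w)=\Psi(-B'(a,w))f(w)$ for $a\in\mathcal{L}'$, $w\in\mathcal{L}^{'\frac{1}{2}}$, combined with $B'(\mathcal{L}',\mathcal{L}^{'\frac{1}{2}})\subseteq 2\mathfrak{O}\cdot\mathfrak{O}=\mathfrak{m}\subseteq\ker\Psi$, reduces to $f(a+w)=f(w)$, so $f$ is really a function on $\mathcal{L}^{'\frac{1}{2}}/\mathcal{L}'\simeq F^{m}$. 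Hence $\dim V_\psi=2^{dm}=|W|^{1/2}$, which matches the dimension prescribed by Stone--von Neumann for the Heisenberg group $H_\beta(W)$ with faithful central character $\psi$. Invoking the classification of irreducibles with fixed central character on a finite Heisenberg group finishes the proof.

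The main technical point I expect to have to pin down carefully is the dimension/Stone--von Neumann step: one must check that $\beta$ induces a nondegenerate pairing on $W$ after pushing the $\mathfrak{m}/\mathfrak{m}^2$-values down via $\psi$, so that the centre of $H_\beta(W)$ really is $\{0\}\times R$ and the Stone--von Neumann dimension is indeed $|W|^{1/2}$. Once that is in place, the three ingredients above (factorisation through the quotient, correct central character, correct dimension) combine to identify the action on $V_\psi$ with the Heisenberg representation of $H_\beta(W)$ attached to $\psi$.
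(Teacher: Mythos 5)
Your proposal is correct and takes the same route the paper intends (the paper states the corollary without a written proof, relying on Lemma~\ref{HH}, the dimension count $\dim V_\psi=2^{dm}$ stated just above it, the chosen faithfulness of $\psi$, and the Stone--von Neumann theorem in Section~\ref{GLGU}); your assembly of these ingredients is exactly right. One bookkeeping slip in the dimension step: you wrote $B'(\mathcal{L}',\mathcal{L}^{'\frac{1}{2}})\subseteq 2\mathfrak{O}\cdot\mathfrak{O}=\mathfrak{m}\subseteq\ker\Psi$, but $\Psi$ has level $\mathfrak{m}^2$, so $\mathfrak{m}\not\subseteq\ker\Psi$. You dropped the factor $2$ from $B'=2B$: since $B(\mathcal{L}',\mathcal{L}^{'\frac{1}{2}})\subseteq 2\mathfrak{O}=\mathfrak{m}$, one actually has $B'(\mathcal{L}',\mathcal{L}^{'\frac{1}{2}})\subseteq 2\mathfrak{m}=\mathfrak{m}^2\subseteq\ker\Psi$, which is precisely the paper's displayed computation $\Psi(-B'(a,w))=\Psi(-2B(a,w))=1$. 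The two errors cancel, so the conclusion $\dim V_\psi=2^{dm}=|W|^{1/2}$ and hence the whole argument go through, but it is worth stating the correct inclusion.
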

\subsection{Extending to an affine symplectic group}\label{ASG}
Let $P's(\mathscr{W}_{ \mathfrak{O}})$ be the set  of elements $(g, q')$ of $P's(\mathscr{W})$ such that $g\in \Sp(\mathscr{W}_{ \mathfrak{O}})$, and $q':
\mathscr{W}_{ \mathfrak{O}} \longrightarrow \mathfrak{O}$, $q':
\mathscr{W}_{ \mathfrak{m}} \longrightarrow \mathfrak{m}^2$, $q':
\frac{\mathscr{W}_{ \mathfrak{O}}}{\mathscr{W}_{ \mathfrak{m}}} \longrightarrow \frac{ \mathfrak{O}}{\mathfrak{m}^2}$ is a quadratic function.
Let $\Gamma_1'=\ker( \Sp(\mathscr{W}_{ \mathfrak{O}}) \longrightarrow \Sp(\mathscr{W}_{ \mathfrak{O}}/\mathscr{W}_{ \mathfrak{m}}))$, and $P's(\mathscr{W}_{ \mathfrak{O}})_1=\{ (g, q') \in P's(\mathscr{W}_{ \mathfrak{O}}) \mid g \in \Gamma_1', q': \mathscr{W}_{ \mathfrak{O}} \longrightarrow \mathfrak{m}^2\}$. Recall the group homomorphism $\alpha': \Sp(\mathscr{W}_{ \mathfrak{O}}) \longrightarrow  P's(\mathscr{W}); g\longmapsto (g, q_g')$, where $q_g'(x+x^{\ast})= \frac{1}{2}B'(
xa, xb)+\frac{1}{2}B'( x^{\ast}c, x^{\ast}d)+B'(x^{\ast}c, xb)$, for $w\in\mathscr{W}_{ \mathfrak{O}}$,  $g=\begin{pmatrix} a& b\\c & d\end{pmatrix}\in \Sp(\mathscr{W}_{ \mathfrak{O}} )$. Hence the image lies in $P's(\mathscr{W}_{ \mathfrak{O}})$.

Let  $\mathscr{W}_{ \mathfrak{O}}^{'\vee}=\{ q: \mathscr{W}_{ \mathfrak{O}} \longrightarrow \mathfrak{O},  \textrm{ a  continuous  additve group homomorphism   }; q(\mathscr{W}_{ \mathfrak{m}}) \subseteq \mathfrak{m}^2; q: \frac{\mathscr{W}_{ \mathfrak{O}} }{\mathscr{W}_{ \mathfrak{m}}}\longrightarrow \frac{\mathfrak{O}}{\mathfrak{m}^2}, \textrm{ a quadratic function.}\}$,   $\mathscr{W}_{ \mathfrak{O}}^{''\vee}=\Hom_{group}( \mathscr{W}_{ \mathfrak{O}}, \mathfrak{m}^2)$, and $W^{\vee}=\{ f\mid f: W\longrightarrow R, \textrm{ a quadratic function}\}$.
\begin{lemma}\label{splitD}
  There exists a split exact sequence: $1\longrightarrow \mathscr{W}_{ \mathfrak{O}}^{'\vee} \longrightarrow P's(\mathscr{W}_{ \mathfrak{O}}) \longrightarrow \Sp(\mathscr{W}_{ \mathfrak{O}}) \longrightarrow 1$.
\end{lemma}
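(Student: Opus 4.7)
The plan is to model the proof directly on that of the analogous Lemma~\ref{split} for the symplectic group $\Sp(\mathscr{W})$ (proved just before equation~\eqref{B}), while adding one new ingredient: the verification that all constructions respect the integrality conditions built into the definition of $P's(\mathscr{W}_{\mathfrak{O}})$. The three things to check are (i) that the kernel of the forgetful map $(g,q')\mapsto g$ is exactly $\mathscr{W}_{ \mathfrak{O}}^{'\vee}$, (ii) that the formula $(\ref{B})$ with $B$ replaced by $B'$ produces an element of $P's(\mathscr{W}_{\mathfrak{O}})$, and (iii) that this produces a group homomorphism, hence a splitting.

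For (i), first note that if $(1,q')\in P's(\mathscr{W}_{\mathfrak{O}})$ then the defining relation $(\ref{equiv2})$ forces $q'(w+w')=q'(w)+q'(w')$, so $q'$ is additive; together with the already-required integrality conditions $q'(\mathscr{W}_{\mathfrak{O}})\subseteq\mathfrak{O}$, $q'(\mathscr{W}_{\mathfrak{m}})\subseteq\mathfrak{m}^2$, and the descent $q':\mathscr{W}_{\mathfrak{O}}/\mathscr{W}_{\mathfrak{m}}\to\mathfrak{O}/\mathfrak{m}^2$, this gives exactly membership in $\mathscr{W}_{\mathfrak{O}}^{'\vee}$ (the quadratic-on-the-quotient condition is automatic for an additive map). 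For (iii), I can define $\alpha'(g)=(g,q'_g)$ with
$$q'_g(x+x^{\ast})=\tfrac{1}{2}B'(xa,xb)+\tfrac{1}{2}B'(x^{\ast}c,x^{\ast}d)+B'(x^{\ast}c,xb),$$
for $g=\begin{pmatrix}a&b\\c&d\end{pmatrix}$, and then the verification that $q'_g\in\Sigma'_g$ and that $\alpha'(g_1g_2)=\alpha'(g_1)\alpha'(g_2)$ proceeds by the identical bilinear algebra as in the proof of Lemma~\ref{split}, since $B'$ is an ordinary bilinear form and none of that argument uses anything beyond bilinearity.

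The only genuinely new step, and the one I expect to be the main obstacle, is (ii): checking that $q'_g$ really defines an element of $P's(\mathscr{W}_{\mathfrak{O}})$, i.e.\ takes values in $\mathfrak{O}$ on $\mathscr{W}_{\mathfrak{O}}$, values in $\mathfrak{m}^2$ on $\mathscr{W}_{\mathfrak{m}}$, and descends to a quadratic function $\mathscr{W}_{\mathfrak{O}}/\mathscr{W}_{\mathfrak{m}}\to\mathfrak{O}/\mathfrak{m}^2$. Here the modification of Section~\ref{Modifyingby} is crucial: using $B'=2B$ the displayed formula becomes
$$q'_g(x+x^{\ast})=B(xa,xb)+B(x^{\ast}c,x^{\ast}d)+2B(x^{\ast}c,xb),$$
so the dangerous factors of $\tfrac{1}{2}$ disappear. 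Since $g\in\Sp(\mathscr{W}_{\mathfrak{O}})$ preserves the basis $\{e_i,\,e_i^{\ast}\}$ of $\mathcal{L}^{'1/2}$ over $\mathfrak{O}$, each block $a,b,c,d$ has entries in $\mathfrak{O}$, so the three summands land in $\mathfrak{O}$; an element of $\mathscr{W}_{\mathfrak{m}}$ has both $x$ and $x^{\ast}$ components in $\mathfrak{m}\cdot\mathcal{L}^{'1/2}$, whence each summand picks up a factor of $\mathfrak{m}^2$. Quadraticity on the quotient is automatic from the shape of the formula (a sum of products of linear terms). Putting these observations together with (i) and (iii) gives the split exact sequence, with splitting $\alpha'$, exactly as in Lemma~\ref{split}.
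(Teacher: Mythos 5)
Your proof is correct and follows the same approach as the paper: invoke Lemma~\ref{split} and check that the section $g\mapsto(g,q'_g)$ lands in $P's(\mathscr{W}_{\mathfrak{O}})$. The paper's proof is a one-liner that simply asserts the integrality of the section map (justified only by the sentence preceding the lemma), so the main contribution of your write-up is to actually carry out that check — showing that $q'_g=B(xa,xb)+B(x^{\ast}c,x^{\ast}d)+2B(x^{\ast}c,xb)$ has $\mathfrak{O}$-entries, drops $\mathscr{W}_{\mathfrak{m}}$ into $\mathfrak{m}^2$, and descends quadratically — which is the genuinely nontrivial content the paper glosses over. One small caution: the parenthetical claim in your step (i), that ``the quadratic-on-the-quotient condition is automatic for an additive map,'' is unnecessary (the defining conditions of $\mathscr{W}_{\mathfrak{O}}^{'\vee}$ already match the kernel condition verbatim) and is not obviously true for the Genestier--Lysenko/Gurevich--Hadani notion of quadratic function over $\mathbb{F}_{2^d}$ with $d>1$; better to drop it rather than rely on it.
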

\begin{proof}
It follows from Lemma \ref{split} and the fact that the image of the section map lies in $P's(\mathscr{W}_{ \mathfrak{O}})$.
\end{proof}
\begin{lemma}\label{sec}
For any $g\in \Gamma_1'$, there exists $q'$  such that $(g, q') \in P's(\mathscr{W}_{ \mathfrak{O}})_1$.
\end{lemma}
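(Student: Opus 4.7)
The plan is to start from the standard section $\alpha'$ of Lemma~\ref{splitD}, which already produces a candidate pair $(g, q_g') \in P's(\mathscr{W})$ for every $g \in \Sp(\mathscr{W}_{\mathfrak{O}})$, and then to modify $q_g'$ by a suitable element $q_0 \in \mathscr{W}^{\vee}$ so that the corrected function $q' := q_g' - q_0$ takes values in $\mathfrak{m}^{2}$ on $\mathscr{W}_{\mathfrak{O}}$. The key point---and the main obstacle---is showing that the obstruction to integrality modulo $\mathfrak{m}^{2}$ is an \emph{additive} function on $\mathscr{W}_{\mathfrak{O}}$, so that it can actually be cancelled by an element of $\mathscr{W}^{\vee}$.

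To that end, I would first exploit the defining cocycle identity
\[
q_g'(w+w') - q_g'(w) - q_g'(w') = B'(wg, w'g) - B'(w, w').
\]
Writing $g = \mathrm{Id} + 2h$---which is exactly the assumption $g \in \Gamma_1'$ and forces $h$ to stabilise $\mathscr{W}_{\mathfrak{O}}$---the right-hand side expands as $2B'(w, w'h) + 2B'(wh, w') + 4B'(wh, w'h)$, and because $B' = 2B$ contributes an additional factor of $2$ while $B(\mathscr{W}_{\mathfrak{O}}, \mathscr{W}_{\mathfrak{O}}) \subseteq \mathfrak{O}$, the whole expression lies in $\mathfrak{m}^{2}$. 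Together with the fact that the explicit formula for $q_g'$ already gives $q_g'(\mathscr{W}_{\mathfrak{O}}) \subseteq \mathfrak{O}$, this shows that the reduction $\bar{q}_g' : \mathscr{W}_{\mathfrak{O}} \to \mathfrak{O}/\mathfrak{m}^{2}$ is a continuous group homomorphism.

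Next, since $\mathscr{W}_{\mathfrak{O}}$ is a free $\mathfrak{O}$-module (hence a free, and in particular projective, $\mathbb{Z}_{2}$-module), the surjection $\mathfrak{O} \twoheadrightarrow \mathfrak{O}/\mathfrak{m}^{2}$ induces a surjection between the corresponding $\Hom$-groups; thus $\bar{q}_g'$ lifts to a continuous additive homomorphism $q_{0} : \mathscr{W}_{\mathfrak{O}} \to \mathfrak{O}$, which I then extend to $\mathscr{W} \to K$ by $\mathbb{Q}_{2}$-linearity to obtain an element of $\mathscr{W}^{\vee}$. Setting $q' := q_g' - q_{0}$ keeps us in $\Sigma_g'$, since subtracting a purely additive homomorphism does not alter the cocycle defect; hence $(g, q') \in P's(\mathscr{W})$, and $q'(\mathscr{W}_{\mathfrak{O}}) \subseteq \mathfrak{m}^{2}$ by construction. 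The remaining integrality conditions defining $P's(\mathscr{W}_{\mathfrak{O}})_{1}$---namely $q'(\mathscr{W}_{\mathfrak{m}}) \subseteq \mathfrak{m}^{2}$ and the induced map $\mathscr{W}_{\mathfrak{O}}/\mathscr{W}_{\mathfrak{m}} \to \mathfrak{O}/\mathfrak{m}^{2}$ being a quadratic function---are then automatic (the latter is identically zero), giving $(g, q') \in P's(\mathscr{W}_{\mathfrak{O}})_{1}$.

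The most delicate step is the additivity calculation modulo $\mathfrak{m}^{2}$: one must use both sources of a factor of $2$---the intrinsic factor from $B' = 2B$ and the congruence $g \equiv \mathrm{Id} \pmod{2}$---to ensure the obstruction lands in $\mathfrak{m}^{2}$ rather than merely in $\mathfrak{m}$. Once that is established, the cohomological lifting of $\bar{q}_g'$ to $\mathscr{W}^{\vee}$ is formal, and the verification that $q'$ satisfies all the defining axioms of $P's(\mathscr{W}_{\mathfrak{O}})_{1}$ reduces to the integrality check that is immediate from $q'(\mathscr{W}_{\mathfrak{O}}) \subseteq \mathfrak{m}^{2}$.
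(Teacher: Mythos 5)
Your proof is correct, and it takes a genuinely different route from the paper. You pass immediately to the reduction $\bar{q}_{g}'\colon\mathscr{W}_{\mathfrak{O}}\to\mathfrak{O}/\mathfrak{m}^{2}$, verify via the cocycle identity that it is a continuous additive (hence $\mathbb{Z}_{2}$-linear) homomorphism (the key point: with $g=1+2h$, $h(\mathscr{W}_{\mathfrak{O}})\subseteq\mathscr{W}_{\mathfrak{O}}$, each term $2B'(w,w'h)=4B(w,w'h)$, $2B'(wh,w')$, $4B'(wh,w'h)$ already lies in $4\mathfrak{O}=\mathfrak{m}^{2}$), and then lift it to $q_{0}\in\mathscr{W}^{\vee}$ using projectivity of $\mathscr{W}_{\mathfrak{O}}$ as a $\mathbb{Z}_{2}$-module. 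The paper instead decomposes $q_{g}'$ explicitly as $2B'(wg_{1},wg_{1})+\langle wg_{1},wg\rangle'+B'(w,2wg_{1})$, observes that the first and third summands are already in $\mathfrak{m}^{2}$, and constructs by hand, using the $\mathfrak{O}$-basis $\{e_{i}',2e_{j}^{\ast'}\}$, an additive $f'$ congruent to $f(w)=\langle wg_{1},wg\rangle'$ modulo $\mathfrak{m}^{2}$. Your abstract lifting buys you something concrete: the paper's verification that $f'\equiv f\pmod{\mathfrak{m}^{2}}$ passes through the congruence $a_{i}\equiv a_{i}^{2}\pmod{\mathfrak{m}}$ for the $\mathfrak{O}$-coordinates $a_{i}$, which holds when $a_{i}\in\mathbb{Z}_{2}$ but fails for general $a_{i}\in\mathfrak{O}$ when the residue field $F$ has more than two elements. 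To make the paper's explicit construction go through for $d>1$ one should define $f'$ on a $\mathbb{Z}_{2}$-basis of $\mathscr{W}_{\mathfrak{O}}$ rather than on an $\mathfrak{O}$-basis; your formulation avoids the issue entirely, since the lifting of a $\mathbb{Z}_{2}$-linear map from a free $\mathbb{Z}_{2}$-module along $\mathfrak{O}\twoheadrightarrow\mathfrak{O}/\mathfrak{m}^{2}$ is automatic and requires no choice of $\mathfrak{O}$-structure. The remaining checks in your argument---that subtracting an additive $q_{0}\in\mathscr{W}^{\vee}$ preserves membership in $\Sigma_{g}'$, and that $q'(\mathscr{W}_{\mathfrak{O}})\subseteq\mathfrak{m}^{2}$ forces all the defining conditions of $P's(\mathscr{W}_{\mathfrak{O}})_{1}$---are exactly right.
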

\begin{proof}
1) For $g\in \Gamma_1'$, let us write $g=1+2g_1$, for some $g_1\in\End( \mathscr{W}_{ \mathfrak{O}})$. Recall $\alpha(g)=(g,q_g')$.  So for any $w\in \mathscr{W}_{\mathfrak{O}}$,
$$q'_g(w)=\frac{1}{2}B'(wg,wg)-\frac{1}{2}B'(w,w)=\frac{1}{2}B'(2wg_1,2wg_1)+\frac{1}{2}B'(2wg_1,w)+\frac{1}{2}B'(w, 2wg_1) $$
$$=2B'(wg_1,wg_1)+\langle wg_1,wg\rangle'+B'(w, 2wg_1).$$
Note that $2B'(wg_1,wg_1)+B'(w, 2wg_1) \in \mathfrak{m}^2$. On the other hand, let us define $f(w)=\langle wg_1,wg\rangle'=\langle 2wg_1,wg\rangle =-\langle w,wg\rangle\in \mathfrak{m}$. Then $f|_{\mathscr{W}_{ \mathfrak{m}}}\in \mathfrak{m}^2$, and $f(w_1+w_2)=f(w_1)+f(w_2) \textrm{ modulo } \mathfrak{m}^2$.\\
2) Recall that $\mathscr{W}_{\mathfrak{O}}=\mathfrak{O} e_1'+ \cdots + \mathfrak{O} e_m' + \mathfrak{O} 2e_1^{\ast'}+ \cdots + \mathfrak{O} 2e_m^{\ast'}$. Let us define an additive group homomorphism:
$$f': \mathscr{W}_{\mathfrak{O}}  \longrightarrow \mathfrak{O}; \sum a_i  e_i' + a_j^{\ast} 2e_j^{\ast'} \longmapsto \sum a_i f(e_i')+ a_j^{\ast} f(2e_j^{\ast'}).$$
Then: \begin{itemize}
\item[(a)] $f'(\mathscr{W}_{\mathfrak{O}}) \subseteq  \mathfrak{m}$,  $f'(\mathscr{W}_{\mathfrak{m}}) \subseteq  \mathfrak{m}^2$, so
$f'|_{\mathscr{W}_{ \mathfrak{O}}} \equiv f'|_{\mathscr{W}_{ \mathfrak{O}}/\mathscr{W}_{ \mathfrak{m}}}(\mod  \mathfrak{m}^2)$;
\item[(b)] for $w=\sum a_i  e_i' + a_j^{\ast} 2e_j^{\ast'}$, $$f'(w)\equiv \sum a_i f(e_i')+ a_j^{\ast} f(2e_j^{\ast'}) \equiv \sum a_i^2 f(e_i')+ a_j^{\ast 2} f(2e_j^{\ast'}) \equiv \sum  f(a_ie_i')+ f( a_j^{\ast}2e_j^{\ast'}) \equiv f(w)(\mod  \mathfrak{m}^2).$$
\end{itemize}
Hence $(g, q_g'+f')\in P's(\mathscr{W}_{ \mathfrak{O}}) $. Moreover, for $w\in \mathscr{W}_{ \mathfrak{O}}$,
$$ q_g'(w)+f'(w)\equiv 2B'(wg_1,wg_1)+2\langle wg_1,wg\rangle'+B'(w, 2wg_1) \equiv 0 (\mod  \mathfrak{m}^2).$$
\end{proof}
\begin{corollary}\label{exgamma1}
  There exists an exact sequence: $1\longrightarrow \mathscr{W}_{ \mathfrak{O}}^{''\vee} \longrightarrow P's(\mathscr{W}_{ \mathfrak{O}})_1 \longrightarrow \Gamma_1' \longrightarrow 1$.
\end{corollary}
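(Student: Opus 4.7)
The plan is to combine the splitting constructed in Lemma \ref{sec} (which gives surjectivity) with a direct identification of the kernel coming from the defining cocycle relation (\ref{equiv2}). Since the ambient sequence $1\to \mathscr{W}_{\mathfrak{O}}^{'\vee}\to P's(\mathscr{W}_{\mathfrak{O}})\to \Sp(\mathscr{W}_{\mathfrak{O}})\to 1$ is already in hand from Lemma \ref{splitD}, the corollary is essentially a ``restricted'' version of it, obtained by cutting both ends down by the level-$\mathfrak{m}$ subgroups.

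First I would define the map $P's(\mathscr{W}_{\mathfrak{O}})_1\longrightarrow \Gamma_1'$ as the projection $(g,q')\mapsto g$; by the very definition of $P's(\mathscr{W}_{\mathfrak{O}})_1$ this lands in $\Gamma_1'$. Surjectivity is precisely the content of Lemma \ref{sec}: given $g\in\Gamma_1'$, that lemma produces a quadratic $q_g'+f'$ valued in $\mathfrak{m}^2$, so $(g,q_g'+f')\in P's(\mathscr{W}_{\mathfrak{O}})_1$ is a preimage.

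Next I would compute the kernel. An element in the kernel is of the form $(1,q')$ with $q'\in\Sigma_1'$ and $q'(\mathscr{W}_{\mathfrak{O}})\subseteq \mathfrak{m}^2$. Specializing (\ref{equiv2}) to $g=1$ gives
\begin{equation*}
q'(w+w')-q'(w)-q'(w')=B'(w,w')-B'(w,w')=0,
\end{equation*}
so $q'$ is an additive group homomorphism $\mathscr{W}_{\mathfrak{O}}\to \mathfrak{m}^2$, i.e.\ an element of $\mathscr{W}_{\mathfrak{O}}^{''\vee}$. Conversely, any $q'\in \mathscr{W}_{\mathfrak{O}}^{''\vee}$ trivially satisfies (\ref{equiv2}) with $g=1$ and lands in $\mathfrak{m}^2$, so $(1,q')\in P's(\mathscr{W}_{\mathfrak{O}})_1$ and sits in the kernel. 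The assignment $q'\mapsto (1,q')$ is an injective group homomorphism (the group law on $P's$ reduces to addition of $q$'s when $g=g'=1$), which supplies the left map.

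I do not anticipate a real obstacle here: both ends of the asserted sequence are carved out of the split sequence in Lemma \ref{splitD} by imposing the ``level-$\mathfrak{m}$'' conditions, and Lemma \ref{sec} was precisely designed to ensure the surjection survives this truncation. The only step worth double-checking is that the group operation on $P's$ restricted to $P's(\mathscr{W}_{\mathfrak{O}})_1$ stays inside $P's(\mathscr{W}_{\mathfrak{O}})_1$; this follows because $\Gamma_1'$ is a subgroup of $\Sp(\mathscr{W}_{\mathfrak{O}})$ and because, for $(g_1,q_1'),(g_2,q_2')$ with $g_i\in\Gamma_1'$ and $q_i'$ taking values in $\mathfrak{m}^2$, the composite law $q''(w)=q_1'(w)+q_2'(wg_1)$ still takes values in $\mathfrak{m}^2$. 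With these three ingredients assembled, the exact sequence follows immediately.
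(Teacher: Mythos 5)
Your argument is correct and is precisely the one implicit in the paper: the paper states the corollary immediately after Lemma \ref{sec} with no separate proof, relying on exactly the three ingredients you assemble (projection is well-defined and lands in $\Gamma_1'$ by definition, surjectivity from Lemma \ref{sec}, and kernel computed by setting $g=1$ in the cocycle relation to get $\Hom_{group}(\mathscr{W}_{\mathfrak{O}},\mathfrak{m}^2)=\mathscr{W}_{\mathfrak{O}}^{''\vee}$). Your closing check that $P's(\mathscr{W}_{\mathfrak{O}})_1$ is closed under the group law is a worthwhile remark the paper leaves tacit, but it does not change the route.
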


\begin{lemma}\label{mainlmm1}
\begin{itemize}
\item[(1)] $P's(\mathscr{W}_{ \mathfrak{O}})_1$ is a normal subgroup of $P's(\mathscr{W}_{ \mathfrak{O}})$.
\item[(2)] $P's(\mathscr{W}_{ \mathfrak{O}})/P's(\mathscr{W}_{ \mathfrak{O}})_1$ is isomorphic with $ASp(W)$, which is  given in Section \ref{GLGU}.
\end{itemize}
\end{lemma}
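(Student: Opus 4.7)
The plan is to prove both parts by exploiting the semidirect product descriptions supplied by Lemma \ref{splitD} and Corollary \ref{exgamma1}, combined with the elementary fact that $\Gamma_1'$ is the kernel of the surjection $\Sp(\mathscr{W}_{\mathfrak{O}})\to \Sp(W)$ induced by reduction modulo $\mathfrak{m}$.

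For part (1), I would take $(h, p')\in P's(\mathscr{W}_{\mathfrak{O}})$ and $(g, q')\in P's(\mathscr{W}_{\mathfrak{O}})_1$ and compute the conjugate $(h, p')(g, q')(h, p')^{-1}$ using the composition rule $q''(w) = p'(w) + q'(wh)$ and the inverse formula $(h,p')^{-1} = (h^{-1}, \tilde{p})$ with $\tilde{p}(w)=-p'(wh^{-1})$. The first coordinate is $hgh^{-1}$, which lies in $\Gamma_1'$ because $\Gamma_1'$ is the kernel of a group homomorphism, hence normal in $\Sp(\mathscr{W}_{\mathfrak{O}})$. For the second coordinate one gets an expression of the form $p'(w) - p'(whgh^{-1}) + q'(wh)$; the term $q'(wh)$ lies in $\mathfrak{m}^2$ since $q'$ already has image in $\mathfrak{m}^2$ on $\mathscr{W}_{\mathfrak{O}}$, while the difference $p'(w) - p'(whgh^{-1})$ lies in $\mathfrak{m}^2$ by expanding $hgh^{-1}=1+2\eta$ (where $\eta\in\End(\mathscr{W}_{\mathfrak{O}})$ exists because $g\in \Gamma_1'$) and using the cocycle relation (\ref{equiv2}) to rewrite it via $B'(w\cdot hgh^{-1}, \cdot) - B'(w,\cdot)$, which lies in $2\mathfrak{O}\cdot 2\mathfrak{O}=\mathfrak{m}^2$ when paired twice.

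For part (2), I would construct the isomorphism $\Phi: P's(\mathscr{W}_{\mathfrak{O}})/P's(\mathscr{W}_{\mathfrak{O}})_1 \stackrel{\sim}{\longrightarrow} ASp(W)$ by reduction. Concretely, send $(g, q')$ to $(\bar{q}', \bar{g})\in W^{\vee}\rtimes \Sp(W)$, where $\bar{g}$ is the image of $g$ in $\Sp(W)$ under $\mathscr{W}_{\mathfrak{O}}\twoheadrightarrow W$ and $\bar{q}': W\to R$ is the induced quadratic function (well-defined since $q'(\mathscr{W}_{\mathfrak{m}})\subseteq \mathfrak{m}^2$ by the definition of $P's(\mathscr{W}_{\mathfrak{O}})$). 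Well-definedness of $\bar{g}\in \Sp(W)$ uses that $B'$ reduces modulo $\mathfrak{m}^2$ to the form $\beta$ defining $\Sp(W)$. Compatibility with the multiplication law $q''(w)=q'(w)+q_2'(wg)$ reduces to the semidirect product law in $ASp(W)=W^{\vee}\rtimes \Sp(W)$, so $\Phi$ is a homomorphism. Surjectivity follows in two steps: the reduction $\Sp(\mathscr{W}_{\mathfrak{O}})\twoheadrightarrow \Sp(W)$ is surjective (standard for the symplectic group over a complete local ring), and any quadratic function $W\to R$ lifts to a valid $q'\in \mathscr{W}_{\mathfrak{O}}^{'\vee}$ by the argument used in Lemma \ref{sec} (choose values on a basis and extend). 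Finally, $(g, q')\in \ker \Phi$ iff $\bar{g}=1$ (i.e.\ $g\in \Gamma_1'$) and $\bar{q}'=0$ (i.e.\ $q'(\mathscr{W}_{\mathfrak{O}})\subseteq \mathfrak{m}^2$), which is precisely the definition of $P's(\mathscr{W}_{\mathfrak{O}})_1$.

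The main obstacle will be the bookkeeping in part (2): verifying that after reduction the twisted composition law $q''(w)=q'(w)+q_2'(wg)$ coincides with the semidirect product action of $\Sp(W)$ on $W^{\vee}$ used to define $ASp(W)$ in Section \ref{GLGU}. This hinges on the fact that the correction term in the cocycle (\ref{equiv2}), namely $B'(wg, w'g) - B'(w, w')$, becomes the correct bilinear form governing quadratic functions on $W$ once reduced modulo $\mathfrak{m}^2$, and in particular must match the defining cocycle of the affine symplectic group. Everything else is essentially a formal consequence of the extensions in Lemma \ref{splitD} and Corollary \ref{exgamma1}.
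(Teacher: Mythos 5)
Your proposal takes essentially the same route as the paper: part (1) is proved by directly computing the conjugate, expanding the element of $\Gamma_1'$ as $1+2\eta$, and verifying that the resulting quadratic map is $\mathfrak{m}^2$-valued via the cocycle relation; part (2) is proved by reduction modulo $\mathfrak{m}$ (resp.\ $\mathfrak{m}^2$), matching the extensions of Lemma~\ref{splitD} and Corollary~\ref{exgamma1} against the extension defining $ASp(W)$, which is exactly the content of the commutative diagram (\ref{qeq}) exhibited in the paper. Your write-up of (2) simply spells out the surjectivity and kernel computation that the paper leaves implicit in the diagram.

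One imprecision worth noting in part (1): since $p'\in\Sigma_h$, the relation~(\ref{equiv2}) for $p'$ involves $h$, not $hgh^{-1}$. Writing $whgh^{-1}=w+2w\eta$, you get
$$p'(w)-p'(w+2w\eta)=-p'(2w\eta)-B'(wh,2w\eta h)+B'(w,2w\eta),$$
so the correction terms are $B'(\,\cdot\,h,\,\cdot\,h)-B'(\,\cdot\,,\,\cdot\,)$, not ``$B'(w\cdot hgh^{-1},\cdot)-B'(w,\cdot)$'' as you wrote. Your final conclusion is unaffected: $p'(2w\eta)\in\mathfrak{m}^2$ because $2w\eta\in\mathscr{W}_{\mathfrak{m}}$ and $p'(\mathscr{W}_{\mathfrak{m}})\subseteq\mathfrak{m}^2$ by definition of $P's(\mathscr{W}_{\mathfrak{O}})$, while each $B'$-term acquires one factor of $2$ from $B'=2B$ and another from the argument $2w\eta$, landing in $\mathfrak{m}^2$. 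This matches the paper's computation (which applies the cocycle for $q'\in\Sigma_g$ to $q'(w_1g^{-1})-q'(w_1g_1g^{-1})$), just organized from the other side.
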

\begin{proof}
1) For $h_1=(g_1, q'_1) \in P's(\mathscr{W}_{ \mathfrak{O}})_1$, $h=(g, q') \in P's(\mathscr{W}_{ \mathfrak{O}})$, $h^{-1}=(g^{-1}, -{}^{g^{-1}}q')$, where $-{}^{g^{-1}}q'(w)=-q'(wg^{-1})$. Assume $hh_1h^{-1}=(gg_1g^{-1}, q'')$. Then $q''=q'+{}^gq'_{1}-{}^{gg_1g^{-1}}q'$, and $q''(w)=q'(w)+q'_1(wg)-q'(wgg_1g^{-1})$. Note that the image of  ${}^gq'_1$ lies in $\mathfrak{m}^2$.
Assume $g_1=1+2g_2$, for some $g_2\in  \End( \mathscr{W}_{ \mathfrak{O}})$.   For $w\in \mathscr{W}_{ \mathfrak{O}}$, let $w_1=wg$. Then:
$$q'(w)-q'(wgg_1g^{-1})=q'(w_1g^{-1})-q'(w_1g_1g^{-1})$$
$$=-q'(2w_1g_2g^{-1})-B'(2w_1g_2,w_1)+B'(2w_1g_2g^{-1},w_1g^{-1})\in \mathfrak{m}^2.$$
2) Follow the notations from Section \ref{GLGU}.  Let us choose the basis  $\{ e_1, \cdots, e_m;  e_1^{\ast}, \cdots,  e_m^{\ast}\}$ of $\mathscr{W}_{ \mathfrak{O}}$ as well as $\mathscr{W}_{ \mathfrak{m}}$. Then identity  $\mathscr{W}_{ \mathfrak{O}}/\mathscr{W}_{ \mathfrak{m}}$ with $W$(Section \ref{GLGU}). Then there exists the following commutative diagram:
\begin{equation}\label{qeq}
\begin{CD}
@. 1@. 1 @.1 @. \\
@.@VVV @VVV @VVV @.\\
1 @>>> \mathscr{W}_{ \mathfrak{O}}^{''\vee} @>>> P's(\mathscr{W}_{ \mathfrak{O}})_1@>>> \Gamma_1' @>>> 1\\
@.@VVV @VVV @VVV @.\\
1 @>>> \mathscr{W}_{ \mathfrak{O}}^{'\vee} @>>> P's(\mathscr{W}_{ \mathfrak{O}})@>>> \Sp(\mathscr{W}_{ \mathfrak{O}}) @>>> 1\\
@.@VVV @VVV @VVV @.\\
1@>>>W^{\vee} @>>> ASp(W) @>>>\Sp(W) @>>> 1\\
@.@VVV @VVV @VVV @.\\
@. 1@. 1 @.1 @.
\end{CD}
\end{equation}
\end{proof}

\begin{lemma}\label{mainlmm2}
$(\Pi'_{\Psi}, V_{\psi})$ can extend to be a projective representation of $P's(\mathscr{W}_{ \mathfrak{O}})\ltimes H_{B'}(\mathscr{W}_{ \mathfrak{O}})$.
\end{lemma}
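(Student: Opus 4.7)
The plan is to establish the lemma by (i) showing that $V_\psi$ is stable under each of the two factors $P's(\mathscr{W}_{\mathfrak{O}})$ and $H_{B'}(\mathscr{W}_{\mathfrak{O}})$ inside the semi-direct product, and (ii) observing that the projective structure then inherits automatically from the ambient projective representation of $P's(\mathscr{W})\ltimes H_{B'}(\mathscr{W})$ on $S_{\mathcal{L}'}$ governed by the Perrin-Rao cocycle $c_{PR,\mathcal{X}^{\ast}}$. The Heisenberg half is already taken care of by Lemma~\ref{HH}(4), so the real work is the stability under the pseudosymplectic factor.

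For the pseudosymplectic factor I would invoke Lemma~\ref{splitD} to write $P's(\mathscr{W}_{\mathfrak{O}})\simeq \mathscr{W}_{\mathfrak{O}}^{'\vee}\rtimes \Sp(\mathscr{W}_{\mathfrak{O}})$ and treat the two pieces in turn. On the abelian piece $\mathscr{W}_{\mathfrak{O}}^{'\vee}$, the action is pointwise multiplication $\Pi'_{\Psi}[q]f(w)=\Psi(q(w))f(w)$ (as in (\ref{representationsp4})), which manifestly preserves the support condition $\supp f\subseteq\mathcal{L}^{'\frac{1}{2}}$ that defines $V_\psi$. For the symplectic piece I would reduce via the Bruhat/Iwahori decomposition of $\Sp(\mathscr{W}_{\mathfrak{O}})\simeq \Sp_{2m}(\mathfrak{O})$ to the standard generators: the upper unipotents $u(b)$ with integral symmetric $b$, the diagonal elements $h_a$ with $a\in\GL_m(\mathfrak{O})$, and the Weyl elements $\omega_S$ for $S\subseteq\{1,\ldots,m\}$.

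For each generator I would carry out the lattice-model computation following the templates of Cases~1--5 of \S\ref{Iwode}--\ref{cc2}, now transported to the primed setup (replacing $B$ by $B'$, $\mathcal{L}$ by $\mathcal{L}'$, and $\mathcal{L}^{\frac{1}{2}}$ by $\mathcal{L}^{'\frac{1}{2}}$). The $u(b)$ and $h_a$ cases reproduce verbatim the rigid formula $\Pi'_{\Psi}(g)f(w)=\Psi(q_g(w))f(wg)$, which sends $V_\psi$ into itself since $\mathcal{L}^{'\frac{1}{2}}$ is stable under such $g$. For the Weyl elements $\omega_S$ the calculation proceeds as in (\ref{eq5}): substitute through $\mathcal{F}_{\mathcal{X}\mathcal{L}}$ and $\mathcal{F}_{\mathcal{L}\mathcal{X}}$, perform the inner Gaussian integral over $\mathcal{L}'\cap\mathcal{X}_S$ and $\mathcal{L}'\cap\mathcal{X}_{S'}$, and identify the vanishing conditions coming from the dual-lattice constraints; after a change of variable the output takes the form $\sum_{x_i}\Psi(\ldots)f(x_i+w\omega_S)$ with $x_i$ running over a finite quotient of sublattices of $\mathcal{L}^{'\frac{1}{2}}$, so the support remains in $\mathcal{L}^{'\frac{1}{2}}$.

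Once stability is settled, the projective representation claim is immediate: the 2-cocycle that already rules $\Pi'_{\Psi}$ on $S_{\mathcal{L}'}$ takes values in $\mu_8$ and is inherited by the $\Sp(\mathscr{W}_{\mathfrak{O}})$-subgroup, while the $\mathscr{W}_{\mathfrak{O}}^{'\vee}$- and Heisenberg-parts act genuinely, so the combined action on $V_\psi$ is a projective representation of $P's(\mathscr{W}_{\mathfrak{O}})\ltimes H_{B'}(\mathscr{W}_{\mathfrak{O}})$. The main obstacle is the Weyl-element computation: one must carefully track the normalizations of the various Haar measures $\mu_{\mathcal{L}'\cap\mathcal{X}_S}$, $\mu_{\mathcal{X}_S^{\ast}/\mathcal{X}_S^{\ast}\cap\mathcal{L}'}$, and their half-lattice analogues in the primed setup, and check that in the doubling $\langle,\rangle'=2\langle,\rangle$ the Fourier kernel still cuts out exactly the support $\mathcal{L}^{'\frac{1}{2}}$ rather than leaking onto the complement $\mathcal{L}'\setminus\mathcal{L}^{'\frac{1}{2}}$; the parallel computation of Case~5 gives an explicit template, and once the extra factor of $2$ is absorbed into $B'$ the vanishing argument goes through.
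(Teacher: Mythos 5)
Your proposal spends nearly all its effort re-deriving the stability of $V_\psi$ under $\Sp(\mathscr{W}_{\mathfrak{O}})$ and $H_{B'}(\mathscr{W}_{\mathfrak{O}})$, but that is exactly the content of Lemmas~\ref{cc2} and~\ref{HH}, which the paper's proof simply cites in its opening sentence. What the paper's proof actually does, and what your proposal skips, is verify the semi-direct product compatibility: writing $P's(\mathscr{W}_{\mathfrak{O}})\simeq \mathscr{W}_{\mathfrak{O}}^{'\vee}\rtimes \Sp(\mathscr{W}_{\mathfrak{O}})$ via Lemma~\ref{splitD}, since $(1,q)(g,0)=(g,0)(1,{}^{g^{-1}}q)$ in $P's(\mathscr{W}_{\mathfrak{O}})$, one must check the intertwining relation $\Pi'_{\Psi}(q)\,\Pi'_{\Psi}(g)\,f=\Pi'_{\Psi}(g)\,\Pi'_{\Psi}({}^{g^{-1}}q)\,f$ for $f\in V_{\psi}$, and the paper does this on generators $u(b)$, $h_a$, $\omega_S$, and $h_{(S,1/2)}$ (the last via a short Schr\"odinger-side calculation with formulas (\ref{representationsp3})--(\ref{representationsp4})).

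Your ``once stability is settled, the projective representation claim is immediate'' hides this step inside the assertion that there is an ambient projective representation of $P's(\mathscr{W})\ltimes H_{B'}(\mathscr{W})$ on $S_{\mathcal{L}'}$; but the paper never constructs that object as such --- it only exhibits the $\overline{\Sp(\mathscr{W})}$-action, the $\mathscr{W}^{\vee}$-action, and the $H_{B'}(\mathscr{W})$-action separately, and the compatibility between the $\mathscr{W}^{\vee}$-part and the $\Sp$-part is precisely the content of this lemma. Your intuition is defensible: the Schr\"odinger-model formulas do assemble into a projective representation of $\Ps(\mathscr{W})\ltimes H_{B'}(\mathscr{W})$, and the isomorphisms $\mathcal{F}_{\mathcal{X}\mathcal{L}}$, $\mathcal{F}_{\mathcal{L}\mathcal{X}}$ transport it to the lattice model before restriction to $V_\psi$. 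But you should state and justify that step rather than present it as automatic; as written, the step you label ``immediate'' is the one missing from your argument, and the Weyl-element stability computation you flag as ``the main obstacle'' is in fact already disposed of by Lemma~\ref{cc2}.
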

\begin{proof}
By Lemmas \ref{cc2}, \ref{HH}, it suffices to check that the actions of $ \mathscr{W}_{ \mathfrak{O}}^{'\vee}$ and $\Sp(\mathscr{W}_{ \mathfrak{O}})$ are comparable. More precisely, for $(q,g)\in \mathscr{W}_{ \mathfrak{O}}^{'\vee} \rtimes \Sp(\mathscr{W}_{ \mathfrak{O}})\simeq P's(\mathscr{W}_{ \mathfrak{O}})$, we have $(g,q)=(1,q)(g,0)=(g,0)(1,{}^{g^{-1}}q)$.  It reduces to checking that
\begin{equation}\label{Psi}
\Pi'_{\Psi}(q)\Pi'_{\Psi}(g)f=\Pi'_{\Psi}(g)\Pi'_{\Psi}({}^{g^{-1}}q)f, \qquad \quad f\in V_{\psi}.
\end{equation}

 By the formulas in the above case 4, for $q\in \mathscr{W}_{ \mathfrak{O}}^{'\vee}$, $f\in V_{\psi}$, $\Pi'_{\Psi}(q)f(w)=\Psi(q(w))f(w)$. By the formulas in the above  cases 1, 2, for $g=h_a, u(b) \in  \Sp(\mathscr{W}_{ \mathfrak{O}})$, $\Pi'_{\Psi}(g)f(w)=\Psi(q_g(w))f(wg)$.  For $g=h_{(S,1/2)}\omega_S \in \Sp(\mathscr{W}_{ \mathfrak{O}})$, by the formulas in the above case 3, we know $\Pi'_{\Psi}(\omega_S)f(w)=\Psi(q_{\omega_S}(w))f(w\omega_S)$.  Hence, for such elements, the above equation (\ref{Psi}) is satisfied.  For $g=h_{(S,1/2)}=\begin{pmatrix}
  a&\\
   & (a^{\ast})^{-1}
\end{pmatrix}\in \Sp(\mathscr{W})$, for simplicity, let us go back to the formulas (\ref{representationsp3}), (\ref{representationsp4}). Then for $f\in S(\mathcal{X})$, and $\supp f\subseteq \tfrac{1}{2}\mathcal{X}\cap \mathcal{L}'$,
 $$\Pi'_{\Psi}(q)\Pi'_{\Psi}(g)f(x)=q(x)[\Pi'_{\Psi}(g)f](x)=q(x)|det_{\mathcal{X}}(a)|^{\tfrac{1}{2}} f(xa),$$
  $$\Pi'_{\Psi}(g)\Pi'_{\Psi}({}^{g^{-1}}q)f(x)=|det_{\mathcal{X}}(a)|^{\tfrac{1}{2}}[\Pi'_{\Psi}({}^{g^{-1}}q)f](xa)=q(x)|det_{\mathcal{X}}(a)|^{\tfrac{1}{2}} f(xa).$$
\end{proof}
Let us see the behavior of the action of
 $P's(\mathscr{W}_{ \mathfrak{O}})_1 \ltimes H_{B'}(\mathscr{W}_{ \mathfrak{m}})$ on $V_{\psi}$. By Lemma \ref{HH}, $H_{B'}(\mathscr{W}_{ \mathfrak{m}})$ acts trivially on $V_{\psi}$.   Recall the group homomorphism:
$$\alpha': \Sp(\mathscr{W}_{ \mathfrak{O}}) \longrightarrow  P's(\mathscr{W}_{ \mathfrak{O}}); g\longmapsto (g, q_g').$$  By Coro.\ref{exgamma1}, there exists an exact sequence:   $$1\longrightarrow \mathscr{W}_{ \mathfrak{O}}^{''\vee} \longrightarrow P's(\mathscr{W}_{ \mathfrak{O}})_1 \longrightarrow \Gamma_1' \longrightarrow 1.$$
Then there exists a section map: $$\alpha'': \Gamma_1' \longrightarrow P's(\mathscr{W}_{ \mathfrak{O}})_1; g \longmapsto (g, q_g'').$$
By Lemma \ref{sec},  we can choose $q_g''(w)\equiv 2q_g'(w) (\mod \mathfrak{m}^2)$, for $w\in \mathscr{W}_{ \mathfrak{O}}$. By the formulas in the above case 4, $\mathscr{W}_{ \mathfrak{O}}^{''\vee}$ acts trivially on $V_{\psi}$. It reduces to discussing the group $\Gamma_1'$.   Under the basis $\{e_1', \cdots, e_m'; e_1^{'\ast}, \cdots, e_m^{'\ast}\}$, $\Sp(  \mathcal{L}^{'\tfrac{1}{2}})\simeq \begin{pmatrix}
  1&0\\
  0 & \tfrac{1}{2}
\end{pmatrix}\Sp_{2m}(\mathfrak{O}) \begin{pmatrix}
  1&0\\
  0 &2
\end{pmatrix} $, and $\Gamma_1' \simeq \begin{pmatrix}
  1&0\\
  0 & \tfrac{1}{2}
\end{pmatrix}\Gamma_1 \begin{pmatrix}
  1&0\\
  0 &2
\end{pmatrix}$, where $\Gamma_1$ is given in Corollary \ref{Gamma}.

Let $h_{1/2}=\begin{pmatrix}
  1&0\\
  0 & \tfrac{1}{2}
\end{pmatrix}$. By the  Iwahori decomposition, $\Gamma_1'$ can be generated by the elements of the forms $h_{a}=\begin{pmatrix}
  a&0\\
  0 & (a^{\ast})^{-1}
\end{pmatrix}$, $u(b)=\begin{pmatrix}
  1&b\\
  0 & 1
\end{pmatrix}$, and $u^{-}(c)=\begin{pmatrix}
  1&0\\
  c & 1
\end{pmatrix}$, for  $a\in I+  M_m(\mathfrak{m})$, certain $b\in  M_m(\mathfrak{m}^2) $, and certain $c\in M_m(\mathfrak{O})$.  Let $f\in V_{\psi}$, $w=x+x^{\ast}\in \mathcal{L}^{'\tfrac{1}{2}}$. On $\Gamma_1'$, we choose the section $\alpha'': g \longmapsto (g,q_g'')$.

\begin{itemize}
\item[(1)] For $g=h_a$, by Case 2, $\Pi'_{\Psi}(g) f(w)= f(wg)$.
\item[(2)] For $g=u(b)$, by Case 1, $\Pi'_{\Psi}(g) f(w)=\Psi(q''_g(w)) f(wg)=f(x+x^{\ast}+xb)=f(w)$.
\item[(3)] For $g=u^{-}(c)$, let us write  $h=\omega^{-1} u^{-}(c)\omega$, and $g=\omega h\omega^{-1}$.  On $h$, we choose the section $\alpha''(h)=(h, q''_h)$, for $q''_h(w)=2q'_h(w) \mod \mathfrak{m}^2$, for $w\in \mathscr{W}_{ \mathfrak{O}}$. Then by Case 1, $\Pi_{\Psi}'(\alpha''(h))=\Psi(2q_h'(w)) f(wh)$. Similarly, on $\omega$, $\omega^{-1}$, we also choose the section $\alpha''$. Then:
    $$\Pi_{\Psi}'(\alpha''(g))c_{PR, \mathcal{X}^{\ast}}(\omega,h\omega^{-1})f(w)=\Pi_{\Psi}'(\alpha''(\omega))\Pi_{\Psi}'(\alpha''(h))\Pi_{\Psi}'(\alpha''(\omega^{-1}))f(w)$$
    $$=\Psi(2q_{g}'(w)f(wg)=f(w).$$
Let  $\mu_g=c_{PR, \mathcal{X}^{\ast}}(\omega,h\omega^{-1})$. Then: $\Pi'_{\Psi}(g)\mu_gf(w)=f(w)$.
\item[(4)] For any $g\in \Gamma_1'$, let us write $g=g_1g_2$, for some unique $g_1\in h_{1/2}(N^-\cap \Gamma_1) h_{1/2}^{-1}$,  $g_2\in h_{1/2}(B\cap \Gamma_1)h_{1/2}^{-1}$. Let us define a function $t': \Gamma_1' \longrightarrow \mu_8; g \longmapsto \mu_{g_1}$.  Let us modify the action of $\Gamma_1'$  by defining
$\Pi^{'\sim}_{\Psi}(g)f=\Pi'_{\Psi}(g) t'(g)f$, for $f\in V_{\psi}$.  Then $P's(\mathscr{W}_{ \mathfrak{O}})_1 \ltimes H_{B'}(\mathscr{W}_{ \mathfrak{m}})$ acts trivially on $V_{\psi}$.
\end{itemize}

Recall the exact sequence:
$$1\longrightarrow \Gamma_1' \longrightarrow \Sp(\mathscr{W}_{\mathfrak{O}}) \longrightarrow \Sp(\mathscr{W}_{\mathfrak{O}}/\mathscr{W}_{ \mathfrak{m}}) \longrightarrow 1.$$  Under the basis $\{e_1', \cdots, e_m'; e_1^{'\ast}, \cdots, e_m^{'\ast}\}$, $\Sp(  \mathscr{W}_{\mathfrak{O}})\simeq
\begin{pmatrix}
  1&0\\
  0 & \tfrac{1}{2}
\end{pmatrix}\Sp_{2m}(\mathfrak{O}) \begin{pmatrix}
  1&0\\
  0 &2
\end{pmatrix}=h_{1/2}\Sp_{2m}(\mathfrak{O})h_{1/2}^{-1} $.  Recall the Iwahori decomposition in Section \ref{Iwode},  $\Sp_{2m}(\mathfrak{O})=\cup_{\tau \in \mathfrak{W}} I\tau I$, and $I=(N^-\cap I)(B\cap I)$, and $\Gamma_1=(N^-\cap \Gamma_1)(B\cap \Gamma_1)$. Recall $W=\mathscr{W}_{\mathfrak{O}}/\mathscr{W}_{ \mathfrak{m}}$, and the Bruhat decomposition $\Sp(W)=\cup_{\tau \in \mathfrak{W}} B(W) \tau N(W)$.

 For $g\in  \Sp(  \mathscr{W}_{\mathfrak{O}})$, let $\dot{g}$ be its image in $\Sp(W)$.  Let us choose a section map: $s: \Sp(W) \longrightarrow \Sp(  \mathscr{W}_{\mathfrak{O}})$ such that it is comparable with the Iwahori decomposition. More precisely, $s(B(W))\subseteq
h_{1/2}(B\cap I) h_{1/2}^{-1}$,  $s(T(W))\subseteq
h_{1/2}(T\cap I) h_{1/2}^{-1}$, $s(N(W))\subseteq
h_{1/2}(N\cap I) h_{1/2}^{-1}$,  $s(\tau)=h_{1/2}\tau h_{1/2}^{-1}$, and $s(b\tau n)=s(b)s(\tau)s(n)$. Note that $s(\dot{g}_1)=1$, for $g_1\in \Gamma_1'$. Let $W=X\oplus X^{\ast}$ be the corresponding  complete polarization. Then $P(X^{\ast})=\cup_{\tau\in S_m} B(W) \tau N(W)$, and $s(P(X^{\ast})) \subseteq P(\mathcal{X}^{\ast})$.

Assume  $g=s(\dot{g})g_1$, for some $g_1\in \Gamma_1'$.  Let us define a function as follows:
$$t': \Sp(  \mathscr{W}_{\mathfrak{O}})\longrightarrow \mu_8; g \longmapsto \left\{ \begin{array}{ccl} c_{PR, \mathcal{X}^{\ast}}(s(\dot{g}),g_1)t'(g_1) & &g=s(\dot{g})g_1 \notin \Gamma_1',\\
t'(g_1) & & g\in \Gamma_1'. \end{array} \right.$$  Let us modify the action of $ \Sp(\mathscr{W}_{\mathfrak{O}})$  on $V_{\psi}$ by defining
$$\Pi^{'\sim}_{\Psi}(g)f=\Pi'_{\Psi}(g) t'(g)f, \qquad f\in V_{\psi}.$$
For $g=s(\dot{g})g_1\in  \Sp(\mathscr{W}_{\mathfrak{O}})$, if $s(\dot{g})=1$, then by the above (4), $\Pi^{'\sim}_{\Psi}(g)f=f$; if $s(\dot{g})\neq 1$, then $$\Pi^{'\sim}_{\Psi}(g)f=\Pi'_{\Psi}(g) t'(g)f= c_{PR, \mathcal{X}^{\ast}}(s(\dot{g}),g_1)^{-1} \Pi'_{\Psi}(s(\dot{g}))\Pi'_{\Psi}(g_1)t'(g)f=\Pi'_{\Psi}(s(\dot{g}))\Pi^{'\sim}_{\Psi}(g_1)f=\Pi'_{\Psi}(s(\dot{g}))f.$$
Therefore, for  $g=s(\dot{g})g_1\in \Sp(  \mathscr{W}_{\mathfrak{O}}), g_2\in \Gamma_1'$, we have $gg_2=s(\dot{g})g_1g_2$, and $g_2g=s(\dot{g})[(s(\dot{g})^{-1}g_1s(\dot{g}))g_2]$. Moreover,
$$\Pi^{'\sim}_{\Psi}(gg_2)f=\Pi'_{\Psi}(s(\dot{g}))f=\Pi^{'\sim}_{\Psi}(g)f=\Pi^{'\sim}_{\Psi}(g)\Pi^{'\sim}_{\Psi}(g_2)f,$$
$$\Pi^{'\sim}_{\Psi}(g_2g)f=\Pi'_{\Psi}(s(\dot{g}))f=\Pi^{'\sim}_{\Psi}(g)f=\Pi^{'\sim}_{\Psi}(g_2)\Pi^{'\sim}_{\Psi}(g)f.$$
Let $c_{\Pi^{'\sim}_{\Psi}}(-,-)$ be  the $2$-cocycle attached to $\Pi^{'\sim}_{\Psi}$, i.e. $$\Pi^{'\sim}_{\Psi}(g_1)\Pi^{'\sim}_{\Psi}(g_2)=c_{\Pi^{'\sim}_{\Psi}}(g_1,g_2)\Pi^{'\sim}_{\Psi}(g_1g_2), \quad g_1, g_2\in \Sp(  \mathscr{W}_{\mathfrak{O}}).$$  In particular, for $g_2\in \Gamma_1'$, $c_{\Pi^{'\sim}_{\Psi}}(g_1,g_2)=1=c_{\Pi^{'\sim}_{\Psi}}(g_2,g_1)$.
\begin{lemma}\label{GG}
\begin{itemize}
\item[(1)] For $g=g_2s(\dot{g})$, $t'(g)=t'(g_2)c_{PR, \mathcal{X}^{\ast}}(g_2,s(\dot{g}))$, for $g_2\in \Gamma_1'$, $g\in \Sp(  \mathscr{W}_{\mathfrak{O}})$.
\item[(2)] For $g=g_2s(\dot{g})g_1$, $t'(g)=t'(g_2)t'(g_1)c_{PR, \mathcal{X}^{\ast}}(g_2,s(\dot{g}))c_{PR, \mathcal{X}^{\ast}}(g_2s(\dot{g}), g_1)$, for $g_1, g_2\in \Gamma_1'$, $g\in \Sp(  \mathscr{W}_{\mathfrak{O}})$.
\item[(3)] $t'(p_1h_{1/2}\tau h_{1/2}^{-1}p_2)=1$, for $ p_1,p_2\in   h_{1/2}(B\cap I) h_{1/2}^{-1}$, $\tau\in \mathfrak{W}$.
\end{itemize}
\end{lemma}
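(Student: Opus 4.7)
The plan is as follows. First, I will derive a \emph{multiplicativity formula} for $t'$: whenever $\gamma\in\Gamma_1'$ and $g\in\Sp(\mathscr{W}_{\mathfrak{O}})$, I claim
\[
t'(g\gamma)=t'(g)\,t'(\gamma)\,c_{PR,\mathcal{X}^{\ast}}(g,\gamma),\qquad t'(\gamma g)=t'(\gamma)\,t'(g)\,c_{PR,\mathcal{X}^{\ast}}(\gamma,g).
\]
This is immediate from the relations $c_{\Pi_{\Psi}^{'\sim}}(g,\gamma)=1=c_{\Pi_{\Psi}^{'\sim}}(\gamma,g)$ established just before the lemma: rewrite $\Pi_{\Psi}^{'\sim}(g_1)\Pi_{\Psi}^{'\sim}(g_2)=t'(g_1)t'(g_2)c_{PR,\mathcal{X}^{\ast}}(g_1,g_2)\Pi_{\Psi}'(g_1g_2)$ using the cocycle relation for $\Pi_{\Psi}'$, and equate with $c_{\Pi_{\Psi}^{'\sim}}(g_1,g_2)t'(g_1g_2)\Pi_{\Psi}'(g_1g_2)$.

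Given this, part (1) is a direct consequence, applied to $g=g_2s(\dot g)$: it remains only to check $t'(s(\dot g))=1$. Applying the defining formula with the $\Gamma_1'$-piece taken to be $1$ (since $s(\dot g)=s(\dot{s(\dot g)})\cdot 1$), one gets $t'(s(\dot g))=c_{PR,\mathcal{X}^{\ast}}(s(\dot g),1)\,t'(1)=1$, because the Perrin--Rao cocycle is normalized and $t'(1)=\mu_1=c_{PR,\mathcal{X}^{\ast}}(\omega,\omega^{-1})=1$ by the Remark following Lemma \ref{coco}. Part (2) follows from part (1) by applying the right multiplicativity formula to $g=(g_2s(\dot g))\,g_1$, noting $g_1\in\Gamma_1'$.

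Part (3) is the core content. Starting from $g=p_1(h_{1/2}\tau h_{1/2}^{-1})p_2$, I decompose $p_2=p_2^{T}p_2^{N}$ as torus-times-unipotent inside $h_{1/2}(B\cap I)h_{1/2}^{-1}$, and use the fact that the Weyl element $\tau$ normalizes the diagonal torus to rearrange
\[
g=b'\,(h_{1/2}\tau h_{1/2}^{-1})\,n',\qquad b'\in h_{1/2}(B\cap I)h_{1/2}^{-1},\ n'\in h_{1/2}(N\cap I)h_{1/2}^{-1}.
\]
This exhibits the Bruhat expression of $\dot g$, so by our choice of section $s(\dot g)=s(\dot{b'})(h_{1/2}\tau h_{1/2}^{-1})s(\dot{n'})$. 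Writing $b'=s(\dot{b'})\gamma_b$ and $n'=s(\dot{n'})\gamma_n$ with $\gamma_b,\gamma_n$ in the upper parts of $\Gamma_1'$, I compute
\[
g_1:=s(\dot g)^{-1}g\;=\;s(\dot{n'})^{-1}\bigl((h_{1/2}\tau h_{1/2}^{-1})^{-1}\gamma_b(h_{1/2}\tau h_{1/2}^{-1})\bigr)s(\dot{n'})\gamma_n\;\in\;\Gamma_1'.
\]
By the definition of $t'$ one then has $t'(g)=c_{PR,\mathcal{X}^{\ast}}(s(\dot g),g_1)\,t'(g_1)$. The main obstacle I anticipate is twofold: establishing that the Iwahori decomposition $g_1=g_{1,-}g_{1,+}$ has $g_{1,-}=1$, so that $t'(g_1)=\mu_1=1$, and simultaneously that $c_{PR,\mathcal{X}^{\ast}}(s(\dot g),g_1)=1$. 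Both amount to bookkeeping of how conjugation by $h_{1/2}\tau h_{1/2}^{-1}$ interacts with the Iwahori filtration on $\Gamma_1'$; the cocycle triviality should reduce, via the explicit Perrin--Rao formula, to the fact that $c_{PR,\mathcal{X}^{\ast}}$ vanishes on pairs where one factor sits in the Siegel parabolic $P(\mathcal{X}^{\ast})$ and the other is compatible unipotent, which is exactly the setting produced by our rearrangement.
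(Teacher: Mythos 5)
Your parts (1) and (2) are correct and follow essentially the same route as the paper: the multiplicativity relations $t'(g\gamma)=t'(g)t'(\gamma)c_{PR,\mathcal{X}^\ast}(g,\gamma)$ and $t'(\gamma g)=t'(\gamma)t'(g)c_{PR,\mathcal{X}^\ast}(\gamma,g)$ for $\gamma\in\Gamma_1'$, combined with $t'(s(\dot g))=1$, are exactly what the paper uses (the paper writes it out at the level of the operators $\Pi^{'\sim}_\Psi$, but the content is identical).

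Part (3) has a genuine gap, and it is precisely the obstacle you flag. After rewriting $q=b'\bar\tau n'$ with $\bar\tau=h_{1/2}\tau h_{1/2}^{-1}$, the element $g_1=s(\dot q)^{-1}q=s(\dot{n'})^{-1}\bigl(\bar\tau^{-1}\gamma_b\bar\tau\bigr)s(\dot{n'})\gamma_n$ contains the factor $\bar\tau^{-1}\gamma_b\bar\tau$, which is the image of $\gamma_b\in B\cap\Gamma_1'$ under conjugation by a Weyl element. For $\tau\neq 1$ this is \emph{not} in the upper Borel (e.g. for $\tau=\omega$ it lands in the lower unipotent), so the Iwahori decomposition $g_1=g_{1,-}g_{1,+}$ has $g_{1,-}\neq 1$ in general. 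Consequently $t'(g_1)=\mu_{g_{1,-}}$ need not equal $1$, and $c_{PR,\mathcal{X}^\ast}(s(\dot q),g_1)$ need not vanish either — neither argument is in the Siegel parabolic $P(\mathcal{X}^\ast)$. Showing each factor is $1$ separately is therefore not viable; only the \emph{product} $t'(g_1)c_{PR,\mathcal{X}^\ast}(s(\dot q),g_1)$ is what you need to control.

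The fix is to use part (2) rather than the defining formula of $t'$: rewrite $q=p_1''\,s(\dot q)\,p_2''$ with \emph{both} $\Gamma_1'$-pieces $p_1'',p_2''$ in $h_{1/2}(B\cap\Gamma_1)h_{1/2}^{-1}$ (this is possible since the Weyl element normalizes the torus, so the $\gamma_b$ and $\gamma_n$ corrections can be kept on their respective sides). Then $t'(p_i'')=1$ because their $N^-$-component is trivial, and the two Perrin--Rao cocycle factors in (2) vanish because $h_{1/2}(B\cap\Gamma_1)h_{1/2}^{-1}\subseteq P(\mathcal{X}^\ast)$. This is what the paper does, and it circumvents the problem of the lower-unipotent contamination that your $g_1$ suffers.
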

\begin{proof}
1) $\Pi'_{\Psi}(g) t'(g)=\Pi^{'\sim}_{\Psi}(g)= \Pi^{'\sim}_{\Psi}(g_2)\Pi^{'\sim}_{\Psi}(s(\dot{g}))=\Pi^{'}_{\Psi}(g_2)t'(g_2) \Pi^{'}_{\Psi}(s(\dot{g}))=t'(g_2)c_{PR, \mathcal{X}^{\ast}}(g_2,s(\dot{g}))\Pi^{'}_{\Psi}(g)$. Hence
$t'(g)=t'(g_2)c_{PR, \mathcal{X}^{\ast}}(g_2,s(\dot{g}))$.\\
2) $\Pi'_{\Psi}(g) t'(g)=\Pi^{'\sim}_{\Psi}(g)= \Pi^{'\sim}_{\Psi}(g_2)\Pi^{'\sim}_{\Psi}(s(\dot{g}))\Pi^{'\sim}_{\Psi}(g_1)=\Pi^{'}_{\Psi}(g_2)t'(g_2) \Pi^{'}_{\Psi}(s(\dot{g}))\Pi^{'}_{\Psi}(g_1)t'(g_1) =t'(g_2)t'(g_1)c_{PR, \mathcal{X}^{\ast}}(g_2,s(\dot{g}))c_{PR, \mathcal{X}^{\ast}}(g_2s(\dot{g}), g_1))\Pi^{'}_{\Psi}(g)$. Hence
$$t'(g)=t'(g_2)t'(g_1)c_{PR, \mathcal{X}^{\ast}}(g_2,s(\dot{g}))c_{PR, \mathcal{X}^{\ast}}(g_2s(\dot{g}), g_1).$$
3)  Assume $p_i=s(\dot{p}_i)p_i'$. Then, $p_i'\in h_{1/2}(B\cap \Gamma_1) h_{1/2}^{-1}$. So  $t'(p_i)=c_{PR, \mathcal{X}^{\ast}}(s(\dot{p}_i),p_i')=1$. Let $q= p_1h_{1/2}\tau h_{1/2}^{-1}p_2$. Then $q=p_1'' s(q) p_2''$, for some $p_i''\in h_{1/2}(B\cap \Gamma_1) h_{1/2}^{-1}$. By (2), we have:
$$t'(q)=t'(p_1'')t'(p_2'')c_{PR, \mathcal{X}^{\ast}}(p_1'',s(\dot{q}))c_{PR, \mathcal{X}^{\ast}}(p_1''s(\dot{q}), p_2'')=1.$$
\end{proof}
 Moreover, for $g_1, g_2\in \Sp(  \mathscr{W}_{\mathfrak{O}})$,
$$\Pi^{'\sim}_{\Psi}(g_1)\Pi^{'\sim}_{\Psi}(g_2)f=\Pi'_{\Psi}(s(\dot{g}_1))\Pi'_{\Psi}(s(\dot{g}_2))f=c_{PR, \mathcal{X}^{\ast}}(s(\dot{g}_1),s(\dot{g}_2))\Pi'_{\Psi}(s(\dot{g}_1)s(\dot{g}_2))f$$
$$= c_{PR, \mathcal{X}^{\ast}}(s(\dot{g}_1),s(\dot{g}_2)) \Pi'_{\Psi}(s(\dot{g}_1)s(\dot{g}_2))\Pi'_{\Psi}(s(\dot{g}_1\dot{g}_2))^{-1}  \Pi^{'\sim}_{\Psi}(g_1g_2)f.$$

Hence $c_{\Pi^{'\sim}_{\Psi}}(g_1,g_2)=c_{PR, \mathcal{X}^{\ast}}(s(\dot{g}_1),s(\dot{g}_2))\Pi'_{\Psi}(s(\dot{g}_1)s(\dot{g}_2))\Pi'_{\Psi}(s(\dot{g}_1\dot{g}_2))^{-1}  $, for $ g_1, g_2\in \Sp(  \mathscr{W}_{\mathfrak{O}})$. Note that $\Pi^{'\sim}_{\Psi}(s(\dot{g}_1)s(\dot{g}_2))=\Pi^{'\sim}_{\Psi}(s(\dot{g}_1\dot{g}_2))$. Hence
$$\Pi'_{\Psi}(s(\dot{g}_1)s(\dot{g}_2))=\Pi^{'\sim}_{\Psi}(s(\dot{g}_1)s(\dot{g}_2)) t'(s(\dot{g}_1)s(\dot{g}_2))^{-1}$$
$$=\Pi^{'\sim}_{\Psi}(s(\dot{g}_1\dot{g}_2))t'(s(\dot{g}_1)s(\dot{g}_2))^{-1}=\Pi^{'}_{\Psi}(s(\dot{g}_1\dot{g}_2))t'(s(\dot{g}_1)s(\dot{g}_2))^{-1}.$$
Therefore, $c_{\Pi^{'\sim}_{\Psi}}(g_1,g_2)=c_{PR, \mathcal{X}^{\ast}}(s(\dot{g}_1),s(\dot{g}_2))t'(s(\dot{g}_1)s(\dot{g}_2))^{-1}$.
\begin{lemma}\label{reduction}
Let $g_1, g_2 \in  \Sp(  \mathscr{W}_{\mathfrak{O}})$, $p, p_1, p_2\in   h_{1/2}(B\cap I) h_{1/2}^{-1}$.
\begin{itemize}
\item[(1)] $c_{\Pi^{'\sim}_{\Psi}}(p,g_1)=1=c_{\Pi^{'\sim}_{\Psi}}(g_1,p)$.
\item[(2)] $c_{\Pi^{'\sim}_{\Psi}}(p_1g_1p^{-1},pg_2p_2)=c_{\Pi^{'\sim}_{\Psi}}(g_1,g_2)$.
\item[(3)] $c_{\Pi^{'\sim}_{\Psi}}( h_{1/2}\omega_{S_1} h_{1/2}^{-1}, h_{1/2}\omega_{S_2} h_{1/2}^{-1})=1$.
\item[(4)] $t'(p_1g_1p_2)=t'(g_1)$, for $p_i\in \Gamma_1'\cap B$.
\end{itemize}
\end{lemma}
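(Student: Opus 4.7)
The plan is to base all four parts on the explicit formula
$$c_{\Pi^{'\sim}_{\Psi}}(g_1,g_2)=c_{PR, \mathcal{X}^{\ast}}(s(\dot{g}_1),s(\dot{g}_2))\,t'(s(\dot{g}_1)s(\dot{g}_2))^{-1}$$
derived just before the statement, combined with two standard ingredients: (i) the Perrin--Rao cocycle is trivial whenever one argument lies in $P(\mathcal{X}^{\ast})$, i.e.\ $c_{PR, \mathcal{X}^{\ast}}(p,g)=c_{PR, \mathcal{X}^{\ast}}(g,p)=1$ for $p\in P(\mathcal{X}^{\ast})$ (cf.\ \cite{Ra}, \cite{Pe}), and (ii) Lemma \ref{GG}, especially part (3). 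Note that $h_{1/2}(B\cap I)h_{1/2}^{-1}\subseteq P(\mathcal{X}^{\ast})$ and, by the compatibility of the section $s$ with the Iwahori decomposition, $s(\dot{p})\in h_{1/2}(B\cap I)h_{1/2}^{-1}$ whenever $p\in h_{1/2}(B\cap I)h_{1/2}^{-1}$.

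For (1), the Perrin--Rao factor is trivial by (i). For the $t'$ factor, Bruhat-decompose $\dot{g}_1=\dot{b}\dot{\tau}\dot{n}$, so that $s(\dot{g}_1)=s(\dot{b})\cdot h_{1/2}\tau h_{1/2}^{-1}\cdot s(\dot{n})$. Then
$$s(\dot{p})\,s(\dot{g}_1)=\bigl(s(\dot{p})s(\dot{b})\bigr)\cdot h_{1/2}\tau h_{1/2}^{-1}\cdot s(\dot{n})$$
has the shape $p'\,h_{1/2}\tau h_{1/2}^{-1}\,p''$ with $p',p''\in h_{1/2}(B\cap I)h_{1/2}^{-1}$, so Lemma \ref{GG}(3) gives $t'(s(\dot{p})s(\dot{g}_1))=1$; the mirror argument $s(\dot{g}_1)s(\dot{p})=s(\dot{b})\cdot h_{1/2}\tau h_{1/2}^{-1}\cdot \bigl(s(\dot{n})s(\dot{p})\bigr)$ handles the opposite order. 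Part (4) then follows from the coboundary relation
$$t'(g_1g_2)=t'(g_1)t'(g_2)\,c_{PR, \mathcal{X}^{\ast}}(g_1,g_2)\,c_{\Pi^{'\sim}_{\Psi}}(g_1,g_2)^{-1},$$
which is a direct consequence of $\Pi^{'\sim}_{\Psi}=t'\Pi'_{\Psi}$: applying it twice one gets $t'(p_1g_1p_2)=t'(p_1g_1)$ since both correcting factors for $(p_1g_1,p_2)$ vanish by (i) and by (1), and then $t'(p_1g_1)=t'(g_1)$ for the same reason applied to $(p_1,g_1)$, while $t'(p_i)=1$ by Lemma \ref{GG}(3) with $\tau=1$.

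Part (2) is then immediate from (1). By (1), each three-term product factors, giving $\Pi^{'\sim}_{\Psi}(p_1g_1p^{-1})=\Pi^{'\sim}_{\Psi}(p_1)\Pi^{'\sim}_{\Psi}(g_1)\Pi^{'\sim}_{\Psi}(p^{-1})$ and similarly $\Pi^{'\sim}_{\Psi}(pg_2p_2)=\Pi^{'\sim}_{\Psi}(p)\Pi^{'\sim}_{\Psi}(g_2)\Pi^{'\sim}_{\Psi}(p_2)$. Since $c_{\Pi^{'\sim}_{\Psi}}(p^{-1},p)=1$ by (1), we obtain $\Pi^{'\sim}_{\Psi}(p^{-1})\Pi^{'\sim}_{\Psi}(p)=\Pi^{'\sim}_{\Psi}(1)=\mathrm{id}$. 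Collapsing and using (1) once more to absorb $p_1,p_2$ into the product yields
$$\Pi^{'\sim}_{\Psi}(p_1g_1p^{-1})\Pi^{'\sim}_{\Psi}(pg_2p_2)=c_{\Pi^{'\sim}_{\Psi}}(g_1,g_2)\,\Pi^{'\sim}_{\Psi}(p_1g_1g_2p_2),$$
which is exactly the content of (2).

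For (3), the elements $g_i=h_{1/2}\omega_{S_i}h_{1/2}^{-1}$ satisfy $s(\dot{g}_i)=g_i$ by the definition of $s$ on Weyl elements. A direct expansion shows $\omega_{S_1}\omega_{S_2}=t\cdot\omega_{S_1\triangle S_2}$, where $t=-I$ on $\Span\{e_i,e_i^{\ast}\colon i\in S_1\cap S_2\}$ and the identity elsewhere; consequently $s(\dot{g}_1)s(\dot{g}_2)=(h_{1/2}th_{1/2}^{-1})\cdot h_{1/2}\omega_{S_1\triangle S_2}h_{1/2}^{-1}$ has the form $p_1\,h_{1/2}\tau h_{1/2}^{-1}\,p_2$ required in Lemma \ref{GG}(3), so the $t'$ factor is $1$. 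The main obstacle is then the remaining equality $c_{PR, \mathcal{X}^{\ast}}(h_{1/2}\omega_{S_1}h_{1/2}^{-1},h_{1/2}\omega_{S_2}h_{1/2}^{-1})=1$. I plan to verify this by writing down the operators $\Pi'_{\Psi}(h_{1/2}\omega_{S_i}h_{1/2}^{-1})$ on $V_{\psi}$ via the Fourier-type manipulation already carried out in Case $5$ of the $\Sp(\mathcal{L}^{\tfrac{1}{2}})$ subsection, and then matching the composition of the two actions, as operators on $V_{\psi}$, with the explicit action of the product $h_{1/2}\omega_{S_1}\omega_{S_2}h_{1/2}^{-1}\in\Sp(\mathcal{L}^{'\frac{1}{2}})$. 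Since all three elements lie in $\Sp(\mathcal{L}^{'\frac{1}{2}})$ and the integrals and finite Gauss sums arising admit clean closed forms, the $\mu_8$-valued cocycle collapses to $1$.
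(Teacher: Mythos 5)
Your proposal for parts (1), (2), and (4) is sound and is close in spirit to the paper's own proof, but in each case you take a slightly more streamlined route, which is worth recording. For (1), the paper first rewrites $s(\dot{p})s(\dot{g}_1)$ as $p_1\,s(\dot{p}\dot{g}_1)$ with $p_1\in B\cap\Gamma_1'$ and then invokes Lemma~\ref{GG}(1), whereas you observe directly that $s(\dot{p})s(\dot{g}_1)=(s(\dot{p})s(\dot{b}))\cdot h_{1/2}\tau h_{1/2}^{-1}\cdot s(\dot{n})$ already has the shape demanded by Lemma~\ref{GG}(3); that saves a step and avoids the claim that the correction factor $p_1$ lies in $B$. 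For (2), the paper runs a chain of six applications of the abstract $2$-cocycle identity, whereas you work at the operator level: once (1) gives $\Pi^{'\sim}_{\Psi}(p_1g_1p^{-1})=\Pi^{'\sim}_{\Psi}(p_1)\Pi^{'\sim}_{\Psi}(g_1)\Pi^{'\sim}_{\Psi}(p^{-1})$ and $\Pi^{'\sim}_{\Psi}(p^{-1})\Pi^{'\sim}_{\Psi}(p)=\mathrm{id}$, the result collapses immediately. For (4), the paper argues via $\Pi^{'\sim}_{\Psi}(p_i)=\mathrm{id}$ and then compares $\Pi'_{\Psi}(p_1g_1p_2)t'(p_1g_1p_2)$ with $\Pi'_{\Psi}(g_1)t'(g_1)$; your coboundary-relation route $t'(g_1g_2)=t'(g_1)t'(g_2)\,c_{PR,\mathcal{X}^{\ast}}(g_1,g_2)\,c_{\Pi^{'\sim}_{\Psi}}(g_1,g_2)^{-1}$ combined with (1) is equivalent and equally correct.

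Part (3) is where the proposal is incomplete. You do handle the $t'$-factor more carefully than the paper: the paper asserts $g_1g_2=s(\dot{g}_1\dot{g}_2)$, but, as you correctly note, $\omega_{S_1}\omega_{S_2}=t\cdot\omega_{S_1\triangle S_2}$ with $t$ acting as $-I$ on the $S_1\cap S_2$ block, so $g_1g_2$ differs from $s(\dot{g}_1\dot{g}_2)$ by $h_{1/2}th_{1/2}^{-1}\in\Gamma_1'\cap B$; your appeal to Lemma~\ref{GG}(3) nonetheless gives $t'(s(\dot{g}_1)s(\dot{g}_2))=1$. However, for the factor $c_{PR,\mathcal{X}^{\ast}}(h_{1/2}\omega_{S_1}h_{1/2}^{-1},\,h_{1/2}\omega_{S_2}h_{1/2}^{-1})=1$ you only sketch a ``plan'' to compute it via the lattice-model integrals, and that plan is doubtful as stated: $c_{PR,\mathcal{X}^{\ast}}$ is the cocycle of $\Pi'_{\Psi}$ acting on the \emph{full} space $S_{\mathcal{L}'}$, so matching operators restricted to $V_{\psi}$ only pins it down after an extra (true but unstated) faithfulness argument. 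The direct route, consistent with what the paper silently invokes, is via the Leray invariant: $c_{PR,\mathcal{X}^{\ast}}(g_1,g_2)=\gamma\bigl(\Psi(\tfrac{1}{2}q(\mathcal{X}^{\ast},\mathcal{X}^{\ast}g_2^{-1},\mathcal{X}^{\ast}g_1))\bigr)$, and for $g_i=h_{1/2}\omega_{S_i}h_{1/2}^{-1}$ the three Lagrangians $\mathcal{X}^{\ast}$, $\mathcal{X}^{\ast}g_2^{-1}$, $\mathcal{X}^{\ast}g_1$ are all coordinate Lagrangians spanned by subsets of the symplectic basis, for which the Leray invariant vanishes, whence $\gamma=1$. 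If you want a complete proof of (3), you should replace your Gauss-sum plan with this Leray-invariant computation.
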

\begin{proof}
1) (a) $c_{\Pi^{'\sim}_{\Psi}}(p,g_1)=c_{PR, \mathcal{X}^{\ast}}(s(\dot{p}),s(\dot{g}_1))t'(s(\dot{p})s(\dot{g}_1))^{-1}$. Note that $ s(\dot{p})\in  B$, and $s(\dot{p})s(\dot{g}_1)=p_1s(\dot{p}\dot{g}_1)$, for some $p_1\in B\cap \Gamma_1'$. Hence $c_{PR, \mathcal{X}^{\ast}}(s(\dot{p}),s(\dot{g}_1))=1$. By Lemma \ref{GG}, $t'(s(\dot{p})s(\dot{g}_1))=t'(p_1s(\dot{p}\dot{g}_1))=t'(p_1)c_{PR, \mathcal{X}^{\ast}}(p_1,s(\dot{p}\dot{g}_1))=1$. \\
(b) $c_{\Pi^{'\sim}_{\Psi}}(g_1,p)=c_{PR, \mathcal{X}^{\ast}}(s(\dot{g}_1), s(\dot{p}))t'(s(\dot{g}_1)s(\dot{p}))^{-1}=t'(s(\dot{g}_1)s(\dot{p}))^{-1}$. Note that $s(\dot{g}_1)s(\dot{p})=s(\dot{g}_1\dot{p}) p_2$, for some $p_2\in B\cap \Gamma_1'$. So $t'(s(\dot{g}_1)s(\dot{p}))=t'(s(\dot{g}_1\dot{p}) p_2)=c_{PR, \mathcal{X}^{\ast}}(s(\dot{g}_1\dot{p}),p_2)t'(p_2)=1$. \\
2) It is a consequence of (1). More precisely,
$$c_{\Pi^{'\sim}_{\Psi}}(p_1g_1p^{-1},pg_2p_2)=c_{\Pi^{'\sim}_{\Psi}}(p_1,g_1p^{-1})c_{\Pi^{'\sim}_{\Psi}}(p_1g_1p^{-1},pg_2p_2)$$
$$=c_{\Pi^{'\sim}_{\Psi}}(p_1,g_1g_2p_2)c_{\Pi^{'\sim}_{\Psi}}(g_1p^{-1},pg_2p_2)=c_{\Pi^{'\sim}_{\Psi}}(g_1p^{-1},pg_2p_2)$$
$$=c_{\Pi^{'\sim}_{\Psi}}(g_1,p^{-1})c_{\Pi^{'\sim}_{\Psi}}(g_1p^{-1},pg_2p_2)=c_{\Pi^{'\sim}_{\Psi}}(g_1,g_2p_2)c_{\Pi^{'\sim}_{\Psi}}(p^{-1},pg_2p_2).$$
$$=c_{\Pi^{'\sim}_{\Psi}}(g_1,g_2p_2)=c_{\Pi^{'\sim}_{\Psi}}(g_1,g_2p_2)c_{\Pi^{'\sim}_{\Psi}}(g_2,p_2)$$
$$=c_{\Pi^{'\sim}_{\Psi}}(g_1,g_2)c_{\Pi^{'\sim}_{\Psi}}(g_1g_2, p_2)=c_{\Pi^{'\sim}_{\Psi}}(g_1,g_2).$$
3) Let us write $g_i= h_{1/2}\omega_{S_1} h_{1/2}^{-1}$. Note that $g_i=s(\dot{g_i})$, $g_1g_2=s(\dot{g_1}\dot{g_2})$. Hence $c_{\Pi^{'\sim}_{\Psi}}(g_1, g_2)=c_{PR, \mathcal{X}^{\ast}}(s(\dot{g}_1),s(\dot{g}_2))t'(s(\dot{g}_1)s(\dot{g}_2))^{-1}=t'(s(\dot{g}_1)s(\dot{g}_2))^{-1}=1$.\\
4) Note that $\Pi^{'\sim}_{\Psi}(p_i)=\Pi^{'}_{\Psi}(p_1)t'(p_1)=1$, so $\Pi^{'}_{\Psi}(p_1)=1$, $\Pi'_{\Psi}(p_1g_1p_2)=\Pi'_{\Psi}(g_1)$.  Then $\Pi^{'\sim}_{\Psi}(p_1g_1p_2)=\Pi^{'\sim}_{\Psi}(p_1)\Pi^{'\sim}_{\Psi}(g_1)\Pi^{'\sim}_{\Psi}(p_2)=\Pi^{'\sim}_{\Psi}(g_1)$. So $\Pi'_{\Psi}(p_1g_2p_2) t'(p_1g_1p_2)=\Pi'_{\Psi}(g_1)t'(g_1)$. Hence $t'(p_1g_1p_2)=t'(g_1)$.
\end{proof}
Note that  $P's(\mathscr{W}_{ \mathfrak{O}})\ltimes H_{B'}(\mathscr{W}_{ \mathfrak{O}})/ P's(\mathscr{W}_{ \mathfrak{O}})_1 \ltimes H_{B'}(\mathscr{W}_{ \mathfrak{m}}) \simeq  ASp(W) \ltimes H_{\beta}(W)$. Hence the action of $P's(\mathscr{W}_{ \mathfrak{O}})\ltimes H_{B'}(\mathscr{W}_{ \mathfrak{O}})$ on    $V_{\psi}$ via $\Pi^{'\sim}_{\Psi}$ realizes the projective representation of $ASp(W) \ltimes H_{\beta}(W)$ attached to $\psi$ as proved in \cite{GeLy}, \cite{GuHa}.
\subsection{$\mu_4$} Let us  verify that   the associated  $2$-cocycle  can be valued on $\mu_4$ as obtained in \cite{GeLy}, \cite{GuHa}.

For the character  $\Psi$ of $K$, let $\gamma$ denote the  corresponding Weil index. For $g_1, g_2 \in \Sp(\mathscr{W}_{ \mathfrak{O}})$, let $q(g_1,g_2)=q(\mathcal{X}^{\ast}, \mathcal{X}^{\ast} g_2^{-1}, \mathcal{X}^{\ast} g_1)$ be the corresponding Leray invariant. (cf. \cite[p.55]{MoViWa}, \cite{Ra},\cite{Pe}). Then $c_{PR, \mathcal{X}^{\ast}}(g_1, g_2)=\gamma(\Psi(\frac{q(g_1, g_2)}{2}))$. Recall $\Sp(\mathscr{W}_{ \mathfrak{O}}) \longrightarrow \Sp(\mathscr{W}_{ \mathfrak{O}}/\mathscr{W}_{ \mathfrak{m}}); g \longrightarrow \dot{g}$. For $g_1, g_2\in \Sp(\mathscr{W}_{ \mathfrak{O}})$, by the above discussion, $c_{\Pi^{'\sim}_{\Psi}}(g_1,g_2)=c_{PR, \mathcal{X}^{\ast}}(s(\dot{g}_1)),s(\dot{g}_2))t'(s(\dot{g}_1)s(\dot{g}_2))^{-1}$, which only depends on $\dot{g}_1, \dot{g}_2$.

According to \cite{Pe}, \cite{Ra}, by modifying $c_{\Pi^{'\sim}_{\Psi}}(-,-)$ with some  normalizing constants $m(g)$,  we can  get a cocycle that takes the values on $\mu_2$. More precisely,
for $g\in \Sp(\mathscr{W})$, and $g=p_1 \omega_S p_2$, we can  define $m(g)=\gamma(x(g), \tfrac{1}{2} \Psi)^{-1} \{\gamma(\tfrac{1}{2} \Psi)\}^{-|S|}$.  One can see \cite{Kud}, \cite{Ra} for the details.    Define the cocycle:
$$\widetilde{c_{PR, \mathcal{X}^{\ast}}}(g_1,g_2)=c_{PR, \mathcal{X}^{\ast}}(g_1,g_2) m(g_1)m(g_2)m(g_1g_2)^{-1}.$$
Then $\widetilde{c_{PR, \mathcal{X}^{\ast}}}(g_1,g_2)\in \mu_2$. Note that for $p\in P(\mathcal{X}^{\ast})$,  $m(p)\in \mu_4$.
 Associated to the cocycle $\widetilde{c_{PR, \mathcal{X}^{\ast}}}(-,-)$, we denote  the representation by $\widetilde{\Pi_{\Psi}'}$, i.e. $ \widetilde{\Pi_{\Psi}'}(g)=\Pi_{\Psi}'(g) m(g)$.

On $\Gamma_1'$, $\Pi^{'\sim}_{\Psi}(g)=\Pi_{\Psi}'(g)t'(g)=\widetilde{\Pi_{\Psi}'}(g)m(g)^{-1} t'(g)$.  Let us define $t''(g)=m(g)^{-1} t'(g)$. If $g\in B\cap \Gamma_1'$, $t''(g)\in \mu_4$. If $g\in N^-\cap \Gamma_1'$, let $h=\omega^{-1}g\omega$. Then:
$$\widetilde{\Pi_{\Psi}'}(g)=\widetilde{c_{PR, \mathcal{X}^{\ast}}}(\omega,h)\widetilde{c_{PR, \mathcal{X}^{\ast}}}(\omega h, \omega^{-1})\widetilde{\Pi_{\Psi}'}(\omega)\widetilde{\Pi_{\Psi}'}(h)\widetilde{\Pi_{\Psi}'}(\omega^{-1}) $$
$$=\widetilde{c_{PR, \mathcal{X}^{\ast}}}(\omega,h)\widetilde{c_{PR, \mathcal{X}^{\ast}}}(\omega h, \omega^{-1})m(\omega)^{-1} m(h)^{-1}m(\omega^{-1})^{-1}\Pi_{\Psi}'(\omega)\Pi_{\Psi}'(h)\Pi_{\Psi}'(\omega^{-1}).$$
Hence: $$t''(g)^{-1}=\widetilde{c_{PR, \mathcal{X}^{\ast}}}(\omega,h)\widetilde{c_{PR, \mathcal{X}^{\ast}}}(\omega h, \omega^{-1})m(\omega)^{-1} m(h)^{-1}m(\omega^{-1})^{-1}.$$
Note that $ m(h)^{-1}=1$, and $m(\omega)^{-1}m(\omega^{-1})^{-1}\in \mu_4$. Hence, $t''(g)^{-1}\in\mu_4$.

Assume  $g=s(\dot{g})g_1$, for some $g_1\in \Gamma_1'$.  Let us define a function as follows:
$$t'': \Sp(  \mathscr{W}_{\mathfrak{O}})\longrightarrow \mu_8; g \longmapsto \left\{ \begin{array}{ccl} \widetilde{c_{PR, \mathcal{X}^{\ast}}}(s(\dot{g}),g_1)t''(g_1) & &g=s(\dot{g})g_1 \notin \Gamma_1',\\
t''(g_1) & & g\in \Gamma_1'. \end{array} \right.$$  Let us modify the action of $ \Sp(\mathscr{W}_{\mathfrak{O}})$  on $V_{\psi}$ by defining
$$\Pi^{''\sim}_{\Psi}(g)f=\widetilde{\Pi_{\Psi}'}(g) t''(g)f, \qquad f\in V_{\psi}.$$
For $g=s(\dot{g})g_1\in  \Sp(\mathscr{W}_{\mathfrak{O}})$, if $s(\dot{g})=1$, then:
$$\Pi^{''\sim}_{\Psi}(g)f=\widetilde{\Pi_{\Psi}'}(g) t''(g)f=\Pi^{'\sim}_{\Psi}(g)f=f.$$
If $s(\dot{g})\neq 1$, then: $$\Pi^{''\sim}_{\Psi}(g)f=\widetilde{\Pi_{\Psi}'}(g) t''(g)f= \widetilde{c_{PR, \mathcal{X}^{\ast}}}(s(\dot{g}),g_1)^{-1} \widetilde{\Pi_{\Psi}'}(s(\dot{g}))\widetilde{\Pi_{\Psi}'}(g_1)t''(g)f=\widetilde{\Pi_{\Psi}'}(s(\dot{g}))\Pi^{'\sim}_{\Psi}(g_1)f=\widetilde{\Pi_{\Psi}'}(s(\dot{g}))f.$$
Let $c_{\Pi^{''\sim}_{\Psi}}(-,-)$ be  the $2$-cocycle attached to $\Pi^{'\sim}_{\Psi}$.
\begin{lemma}
 $c_{\Pi^{''\sim}_{\Psi}}(g_1,g_2)= \widetilde{c_{PR, \mathcal{X}^{\ast}}}(s(\dot{g}_1),s(\dot{g}_2))t''(s(\dot{g}_1)s(\dot{g}_2))^{-1} \in \mu_4$, for $g_i\in \Sp(  \mathscr{W}_{\mathfrak{O}})$.
\end{lemma}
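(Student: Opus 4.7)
The plan is to mimic, in the normalized setting, the earlier derivation of $c_{\Pi^{'\sim}_{\Psi}}$ from Lemma \ref{reduction} and the preceding discussion, and then track that every factor arising at the end is $\mu_4$-valued. The crucial observation, already implicit in the case split ($s(\dot{g})=1$ vs.\ $s(\dot{g})\neq 1$) immediately preceding the statement, is that $\Pi^{''\sim}_{\Psi}(g)f=\widetilde{\Pi_{\Psi}'}(s(\dot{g}))f$ holds uniformly in $g\in\Sp(\mathscr{W}_{\mathfrak{O}})$.

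For the formula itself I would compute
\begin{align*}
\Pi^{''\sim}_{\Psi}(g_1)\Pi^{''\sim}_{\Psi}(g_2)f
 &= \widetilde{\Pi_{\Psi}'}(s(\dot{g}_1))\widetilde{\Pi_{\Psi}'}(s(\dot{g}_2))f \\
 &= \widetilde{c_{PR,\mathcal{X}^{\ast}}}(s(\dot{g}_1),s(\dot{g}_2))\,\widetilde{\Pi_{\Psi}'}(s(\dot{g}_1)s(\dot{g}_2))f,
\end{align*}
then rewrite $\widetilde{\Pi_{\Psi}'}(s(\dot{g}_1)s(\dot{g}_2)) = \Pi^{''\sim}_{\Psi}(s(\dot{g}_1)s(\dot{g}_2))\,t''(s(\dot{g}_1)s(\dot{g}_2))^{-1}$, and apply the uniform identity once more to conclude $\Pi^{''\sim}_{\Psi}(s(\dot{g}_1)s(\dot{g}_2))f = \Pi^{''\sim}_{\Psi}(g_1g_2)f$ (both sides depending only on $\dot{g}_1\dot{g}_2$). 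Matching this against the defining relation $\Pi^{''\sim}_{\Psi}(g_1)\Pi^{''\sim}_{\Psi}(g_2) = c_{\Pi^{''\sim}_{\Psi}}(g_1,g_2)\Pi^{''\sim}_{\Psi}(g_1g_2)$ gives the claimed formula; this is essentially formal and parallels the computation for $c_{\Pi^{'\sim}_{\Psi}}$.

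For the $\mu_4$-containment, the $\mu_2$-normalization ensures $\widetilde{c_{PR,\mathcal{X}^{\ast}}}(-,-)\subseteq \mu_2$, so it suffices to verify $t''(h)\in\mu_4$ for every $h\in\Sp(\mathscr{W}_{\mathfrak{O}})$. By the piecewise definition of $t''$ and the $\mu_2$-valuedness of $\widetilde{c_{PR,\mathcal{X}^{\ast}}}(s(\dot{g}),g_1)$, the problem immediately reduces to $h\in\Gamma_1'$. Using the Iwahori decomposition $\Gamma_1' = (N^-\cap\Gamma_1')(B\cap\Gamma_1')$, the two partial results already established just before the statement cover the generators: $m(p)\in\mu_4$ for $p\in P(\mathcal{X}^{\ast})$ yields $t''|_{B\cap\Gamma_1'}\subseteq\mu_4$, while the $\omega^{\pm 1}$-conjugation identity displayed above, together with $m(\omega)^{-1}m(\omega^{-1})^{-1}\in\mu_4$ and $m(h)^{-1}=1$, gives $t''|_{N^-\cap\Gamma_1'}\subseteq\mu_4$. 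The multiplicativity relation $t''(h^-h^+) = t''(h^-)t''(h^+)\,\widetilde{c_{PR,\mathcal{X}^{\ast}}}(h^-,h^+)^{-1}$ (a direct consequence of restricting $c_{\Pi^{''\sim}_{\Psi}}$ to $\Gamma_1'$) then assembles these into $t''(h)\in\mu_4$ on all of $\Gamma_1'$.

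The main obstacle I anticipate is the bookkeeping in the last step: ensuring that no $\mu_8$-valued contribution creeps in when $t''$ is expanded across the different Bruhat cells entering the Iwahori factorization. This should be controlled by an analogue of Lemma \ref{reduction}(3) for the normalized cocycle $\widetilde{c_{PR,\mathcal{X}^{\ast}}}$, asserting that the cocycle values between elements of $B\cap I$ and Weyl representatives remain in $\mu_2$; granting this, the multiplicativity argument above closes the proof.
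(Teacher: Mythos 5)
Your derivation of the formula $c_{\Pi^{''\sim}_{\Psi}}(g_1,g_2)=\widetilde{c_{PR,\mathcal{X}^{\ast}}}(s(\dot{g}_1),s(\dot{g}_2))\,t''(s(\dot{g}_1)s(\dot{g}_2))^{-1}$ is exactly the paper's computation: apply the uniform identity $\Pi^{''\sim}_{\Psi}(g)f=\widetilde{\Pi'_{\Psi}}(s(\dot{g}))f$, factor via $\widetilde{c_{PR,\mathcal{X}^{\ast}}}$, and use $\Pi^{''\sim}_{\Psi}(s(\dot{g}_1)s(\dot{g}_2))=\Pi^{''\sim}_{\Psi}(s(\dot{g}_1\dot{g}_2))$ to replace the operator ratio by $t''(s(\dot{g}_1)s(\dot{g}_2))^{-1}$. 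On the $\mu_4$-containment you actually supply something the paper's proof leaves unstated: the paper only verifies $t''\in\mu_4$ on $B\cap\Gamma_1'$ and on $N^-\cap\Gamma_1'$ in the discussion preceding the lemma, and the proof then simply asserts the containment; you are right that one must assemble these via the Iwahori factorization $\Gamma_1'=(N^-\cap\Gamma_1')(B\cap\Gamma_1')$ and then pass to general $g=s(\dot{g})g_1$ using $\widetilde{c_{PR,\mathcal{X}^{\ast}}}\subseteq\mu_2$. Two small remarks. First, your multiplicativity relation has the wrong sign: since $\Pi^{''\sim}_{\Psi}|_{\Gamma_1'}=\mathrm{id}$ on $V_{\psi}$, one gets $\widetilde{\Pi'_{\Psi}}(g)=t''(g)^{-1}\cdot\mathrm{id}$ on $\Gamma_1'$ and hence $t''(h^-h^+)=t''(h^-)t''(h^+)\widetilde{c_{PR,\mathcal{X}^{\ast}}}(h^-,h^+)$, not its inverse; this is harmless because $\widetilde{c_{PR,\mathcal{X}^{\ast}}}$ is $\mu_2$-valued, but it is worth correcting. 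Second, the "main obstacle" you flag at the end is not really there: once $t''|_{\Gamma_1'}\subseteq\mu_4$ is secured, the extension to all of $\Sp(\mathscr{W}_{\mathfrak{O}})$ is immediate from the piecewise definition $t''(s(\dot{g})g_1)=\widetilde{c_{PR,\mathcal{X}^{\ast}}}(s(\dot{g}),g_1)t''(g_1)$, since the extra factor is again $\mu_2$-valued; no further cocycle estimate across Bruhat cells is needed beyond what Lemma \ref{reduction}(3) already records.
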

\begin{proof}
For  $f\in V_{\psi}$,
$$\Pi^{''\sim}_{\Psi}(g_1)\Pi^{''\sim}_{\Psi}(g_2)f=\widetilde{\Pi_{\Psi}'}(s(\dot{g}_1))\widetilde{\Pi_{\Psi}'}(s(\dot{g}_2))f=\widetilde{c_{PR, \mathcal{X}^{\ast}}}(s(\dot{g}_1),s(\dot{g}_2))\widetilde{\Pi_{\Psi}'}(s(\dot{g}_1)s(\dot{g}_2))f$$
$$= \widetilde{c_{PR, \mathcal{X}^{\ast}}}(s(\dot{g}_1),s(\dot{g}_2))\widetilde{\Pi_{\Psi}'}(s(\dot{g}_1)s(\dot{g}_2))\widetilde{\Pi_{\Psi}'}(s(\dot{g}_1\dot{g}_2))^{-1}  \Pi^{''\sim}_{\Psi}(g_1g_2)f.$$

Hence $c_{\Pi^{''\sim}_{\Psi}}(g_1,g_2)= \widetilde{c_{PR, \mathcal{X}^{\ast}}}(s(\dot{g}_1),s(\dot{g}_2))\widetilde{\Pi_{\Psi}'}(s(\dot{g}_1)s(\dot{g}_2))\widetilde{\Pi_{\Psi}'}(s(\dot{g}_1\dot{g}_2))^{-1}  $. Note that $\Pi^{''\sim}_{\Psi}(s(\dot{g}_1)s(\dot{g}_2))=\Pi^{''\sim}_{\Psi}(s(\dot{g}_1\dot{g}_2))$. Hence
$$\widetilde{\Pi_{\Psi}'} (s(\dot{g}_1)s(\dot{g}_2))=\Pi^{''\sim}_{\Psi}(s(\dot{g}_1)s(\dot{g}_2)) t''(s(\dot{g}_1)s(\dot{g}_2))^{-1}$$
$$=\Pi^{''\sim}_{\Psi}(s(\dot{g}_1\dot{g}_2))t''(s(\dot{g}_1)s(\dot{g}_2))^{-1}=\widetilde{\Pi_{\Psi}'}(s(\dot{g}_1\dot{g}_2))t''(s(\dot{g}_1)s(\dot{g}_2))^{-1}.$$
So the result holds.
\end{proof}
\begin{question}
Is it also possible to verify that the $2$-cocycle can be valued on  $\mu_4$ through $c_{PR, \mathcal{X}^{\ast}}$, by selecting the appropriate section map from $\Sp(W)$ to $\Sp(\mathscr{W}_{\mathfrak{O}})$.
\end{question}
\subsection{Extending to an affine symplectic similitude  group}
Follow the notations of Section \ref{ASG}. Let $\mathfrak{u}$ denote the subgroup  of units  of $\mathfrak{O}$, and $\mathfrak{u}_1=1+\mathfrak{m}$.  Let $\GSp(\mathscr{W}_{ \mathfrak{O}})=\{ g\in \GSp(\mathscr{W}), \lambda_g \in \mathfrak{u}\}$.  Let $GP's(\mathscr{W}_{ \mathfrak{O}})$ be the set  of elements $(g, q')$ of $GP's(\mathscr{W})$ such that $g\in \GSp(\mathscr{W}_{ \mathfrak{O}})$, and $q':
\mathscr{W}_{ \mathfrak{O}} \longrightarrow \mathfrak{O}$, $q':
\mathscr{W}_{ \mathfrak{m}} \longrightarrow \mathfrak{m}^2$, $q':
\frac{\mathscr{W}_{ \mathfrak{O}}}{\mathscr{W}_{ \mathfrak{m}}} \longrightarrow \frac{ \mathfrak{O}}{\mathfrak{m}^2}$ is a quadratic function.
Let $G\Gamma_1=\ker( \GSp(\mathscr{W}_{ \mathfrak{O}}) \longrightarrow \GSp(\mathscr{W}_{ \mathfrak{O}}/\mathscr{W}_{ \mathfrak{m}}))$, and $GP's(\mathscr{W}_{ \mathfrak{O}})_1=\{ (g, q') \in GP's(\mathscr{W}_{ \mathfrak{O}}) \mid g \in G\Gamma_1, q': \mathscr{W}_{ \mathfrak{O}} \longrightarrow \mathfrak{m}^2\}$. Recall the group homomorphism $\alpha': \GSp(\mathscr{W}_{ \mathfrak{O}}) \longrightarrow  GP's(\mathscr{W}); g\longmapsto (g, q_g')$, where $q_g'(x+x^{\ast})= \frac{1}{2}B'(
xa, xb)+\frac{1}{2}B'( x^{\ast}c, x^{\ast}d)+B'(x^{\ast}c, xb)$, for $w\in\mathscr{W}_{ \mathfrak{O}}$,  $g=\begin{pmatrix} a& b\\c & d\end{pmatrix}\in \GSp(\mathscr{W}_{ \mathfrak{O}} )$. Hence the image lies in $GP's(\mathscr{W}_{ \mathfrak{O}})$.
\begin{lemma}
\begin{itemize}
\item[(1)] There exists a split exact sequence: $1\longrightarrow \mathscr{W}_{ \mathfrak{O}}^{'\vee} \longrightarrow GP's(\mathscr{W}_{ \mathfrak{O}}) \longrightarrow \GSp(\mathscr{W}_{ \mathfrak{O}}) \longrightarrow 1$.
\item[(2)] There exists an exact sequence: $1\longrightarrow \mathscr{W}_{ \mathfrak{O}}^{''\vee} \longrightarrow GP's(\mathscr{W}_{ \mathfrak{O}})_1 \longrightarrow G\Gamma_1 \longrightarrow 1$.
    \end{itemize}
\end{lemma}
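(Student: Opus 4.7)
The plan is to mirror Lemma \ref{splitD} and Corollary \ref{exgamma1}, adapting each step to the similitude setting by exploiting the splitting $\mathfrak{u}\hookrightarrow \GSp(\mathscr{W}_{\mathfrak{O}})$, $t\mapsto h_t$, already used in the proof of Lemma \ref{split}. Throughout I work with the composition law of $GP's(\mathscr{W}_{\mathfrak{O}})$, which differs from that of $P's$ by a factor of $\lambda$.

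For part (1), I first invoke Lemma \ref{splitD} to obtain a section of $P's(\mathscr{W}_{\mathfrak{O}})\longrightarrow \Sp(\mathscr{W}_{\mathfrak{O}})$, and then extend it to $\GSp(\mathscr{W}_{\mathfrak{O}})$ by declaring $\alpha(h_t)=(h_t,0)$ for $t\in\mathfrak{u}$. Two verifications are needed. First, that $(h_t,0)\in GP's(\mathscr{W}_{\mathfrak{O}})$: the zero form lies in $\Sigma'_{h_t}$ because $B'(wh_t,w'h_t)=tB'(w,w')=\lambda_{h_t}B'(w,w')$, and the integrality constraints are trivially met. Second, that the extended $\alpha$ is multiplicative; since every element of $\GSp(\mathscr{W}_{\mathfrak{O}})$ factors as $g\cdot h_t$ with $g\in \Sp(\mathscr{W}_{\mathfrak{O}})$ and $t\in\mathfrak{u}$, this reduces to comparing $\alpha(g)\alpha(h_t)=(gh_t,\lambda_{h_t}q'_g)$ with $\alpha(h_t)\alpha(h_t^{-1}gh_t)$ under the $\lambda$-twisted product law, which is exactly the computation carried out in Lemma \ref{split} with $B$ replaced by $B'=2B$. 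The kernel of the projection $GP's(\mathscr{W}_{\mathfrak{O}})\longrightarrow \GSp(\mathscr{W}_{\mathfrak{O}})$ is then the set of pairs $(1,q')$ with $q'\in \mathscr{W}_{\mathfrak{O}}^{'\vee}$.

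For part (2), I exploit the further decomposition: every $g\in G\Gamma_1$ factors as $g=g_0 h_t$ with $g_0\in \Gamma_1'$ and $t\in\mathfrak{u}_1=1+\mathfrak{m}$, because $\lambda_g\equiv 1\pmod{\mathfrak{m}}$ for such $g$. Lemma \ref{sec} supplies $q''_{g_0}$ with $(g_0,q''_{g_0})\in P's(\mathscr{W}_{\mathfrak{O}})_1$, while $(h_t,0)$ lies in $GP's(\mathscr{W}_{\mathfrak{O}})_1$ by the section built in part (1). Their product is $(g,q''')$ with $q'''(w)=\lambda_{h_t}q''_{g_0}(w)=t\,q''_{g_0}(w)$; since $t\in\mathfrak{O}$ and $q''_{g_0}$ takes values in $\mathfrak{m}^2$, the same holds for $q'''$. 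Hence $(g,q''')\in GP's(\mathscr{W}_{\mathfrak{O}})_1$, surjectivity onto $G\Gamma_1$ follows, and the kernel is visibly $\mathscr{W}_{\mathfrak{O}}^{''\vee}$.

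The only delicate point is the multiplicativity in part (1), since the $GP's$ product law carries the $\lambda$-factor absent in the $P's$ setting; but this is precisely the calculation already recorded in the proof of Lemma \ref{split}, so no genuinely new difficulty arises. Part (2) is easier because it is only asserted to be exact (not split), and the zero quadratic form on the $\mathfrak{u}_1$-piece automatically satisfies the $\mathfrak{m}^2$-valued condition.
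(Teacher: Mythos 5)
Your argument is correct. For part (1) you are essentially reproducing the paper's one-line proof: the paper simply observes that the explicit section $\alpha'\colon g\mapsto(g,q'_g)$ already lands in $GP's(\mathscr{W}_{\mathfrak{O}})$, which is what your decomposition $\GSp(\mathscr{W}_{\mathfrak{O}})\simeq\Sp(\mathscr{W}_{\mathfrak{O}})\rtimes\mathfrak{u}$ verifies piecewise (note that $q'_{h_t}=0$, so your section and $\alpha'$ coincide). For part (2), however, you take a genuinely different route. The paper redoes the computation of Lemma \ref{sec} from scratch for $g\in G\Gamma_1$: writing $g=1+2g_1$, it expands $q'_g(w)=\tfrac{1}{2}B'(wg,wg)-\lambda_g\tfrac{1}{2}B'(w,w)$, isolates the extra term $(1-\lambda_g)\tfrac{1}{2}B'(w,w)$ introduced by the similitude factor, shows this term is also controlled modulo $\mathfrak{m}^2$, and then builds by hand the additive correction $f'$ making $q'_g+f'$ take values in $\mathfrak{m}^2$. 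You instead factor $g=g_0h_t$ with $t=\lambda_g\in\mathfrak{u}_1$ and $g_0=gh_{t}^{-1}\in\Gamma_1'$ (legitimate since $\lambda_g\equiv 1\pmod{\mathfrak{m}}$ for $g\in G\Gamma_1$), lift each factor --- $(g_0,q''_{g_0})$ via Lemma \ref{sec} and $(h_t,0)$ trivially --- and multiply; the $\lambda$-twisted product law merely scales $q''_{g_0}$ by the unit $t$, which preserves $\mathfrak{m}^2$-valuedness. This is cleaner in that it offloads the analytic estimate entirely onto the already-proven Lemma \ref{sec}, at the cost of invoking the group-theoretic factorization of $G\Gamma_1$; the paper's direct computation has the side benefit of producing the correcting cochain explicitly, though that explicitness is not needed for the statement.
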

\begin{proof}
1) One can see that the image of $\alpha'$ belongs to $GP's(\mathscr{W}_{ \mathfrak{O}})$. Hence the exact sequence is split. \\
2) For $g\in G\Gamma_1$, let us write $g=1+2g_1$, for some $g_1\in\End( \mathscr{W}_{ \mathfrak{O}})$. Then $g^{-1}=1-2g_1 $ mod $4 \End( \mathscr{W}_{ \mathfrak{O}})$. So for any $w=x+x^{\ast}\in \mathscr{W}_{\mathfrak{O}}$,
$$(\lambda_g-1) \frac{1}{2}B'(w,w)=\frac{1}{2}\langle xg, x^{\ast}g\rangle'-\frac{1}{2}\langle x, x^{\ast}\rangle'=\langle xg_1, x^{\ast}2g_1\rangle'+\langle x2g_1,x^{\ast}\rangle+\langle x,x^{\ast}2g_1\rangle$$
$$=\langle xg_1, x^{\ast}2g_1\rangle'+\langle xg,x^{\ast}\rangle+\langle x,x^{\ast}g\rangle=\langle xg_1, x^{\ast}2g_1\rangle'+\langle x,x^{\ast}g^{-1}+x^{\ast}g\rangle$$
$$= \langle x,x^{\ast}\rangle' \textrm{ mod } \mathfrak{m}^2 = B'(w,w)\textrm{ mod } \mathfrak{m}^2 $$

$$q'_g(w)=\frac{1}{2}B'(wg,wg)-\lambda_g\frac{1}{2}B'(w,w)=\frac{1}{2}B'(2wg_1,2wg_1)+\frac{1}{2}B'(2wg_1,w)+\frac{1}{2}B'(w, 2wg_1)+(1-\lambda_g) \frac{1}{2}B'(w,w)$$
$$=2B'(wg_1,wg_1)+\langle wg_1,wg\rangle'+B'(w, 2wg_1)+(1-\lambda_g) \frac{1}{2}B'(w,w).$$
Note that $2B'(wg_1,wg_1)+B'(w, 2wg_1) \in \mathfrak{m}^2$. On the other hand, let us define $f(w)=-\langle wg_1,wg\rangle'+ B'(w,w)=\langle w,wg\rangle+B'(w,w)\in \mathfrak{m}$. Then $f|_{\mathscr{W}_{ \mathfrak{m}}}\in \mathfrak{m}^2$, and $f(w_1+w_2)=f(w_1)+f(w_2) \textrm{ modulo } \mathfrak{m}^2$. Hence the result holds, similar to the second part of the  proof of  Lemma \ref{sec}.
\end{proof}

 For our purpose, let us consider some subgroups of $\GSp(\mathscr{W}_{ \mathfrak{O}})$.  Recall that $\mathfrak{u}$ is  the subgroup  of units  of $\mathfrak{O}$, and $\mathfrak{u}_1=1+\mathfrak{m}$.
\begin{lemma}
$[\mathfrak{u}^2:\mathfrak{u}_1^2]=[\mathfrak{u}:\mathfrak{u}_1]=|F^{\times}|$.
\end{lemma}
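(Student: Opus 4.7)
The plan is to exploit the two special features of the residue characteristic $2$ situation: first, that $-1 \equiv 1 \pmod{\mathfrak{m}}$ so $-1 \in \mathfrak{u}_1$, and second, that the Frobenius $x \mapsto x^2$ is a bijection on $F^{\times}$, so every element of $F^{\times}$ is a square.

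First, the equality $[\mathfrak{u}:\mathfrak{u}_1] = |F^{\times}|$ is the standard isomorphism $\mathfrak{u}/\mathfrak{u}_1 \simeq F^{\times}$ induced by reduction modulo $\mathfrak{m}$. For the remaining equality $[\mathfrak{u}^2:\mathfrak{u}_1^2] = |F^{\times}|$, I would apply the second isomorphism theorem to the inclusions $\mathfrak{u}_1^2 \subseteq \mathfrak{u}^2 \cap \mathfrak{u}_1$ inside $\mathfrak{u}$, giving
\[
\mathfrak{u}^2 / (\mathfrak{u}^2 \cap \mathfrak{u}_1) \;\simeq\; \mathfrak{u}^2 \mathfrak{u}_1 / \mathfrak{u}_1.
\]
Thus the claim reduces to proving the two identities $\mathfrak{u}^2 \cap \mathfrak{u}_1 = \mathfrak{u}_1^2$ and $\mathfrak{u}^2 \mathfrak{u}_1 = \mathfrak{u}$, whereupon the right-hand side becomes $\mathfrak{u}/\mathfrak{u}_1 \simeq F^{\times}$.

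For $\mathfrak{u}^2 \cap \mathfrak{u}_1 = \mathfrak{u}_1^2$, the inclusion $\supseteq$ is immediate. Conversely, if $u \in \mathfrak{u}$ satisfies $u^2 \in \mathfrak{u}_1$, then $\overline{u}^2 = 1$ in $F$; since $F$ has characteristic $2$ the Frobenius is injective, so $\overline{u} = 1$, which places $u \in \mathfrak{u}_1$ and hence $u^2 \in \mathfrak{u}_1^2$. For $\mathfrak{u}^2 \mathfrak{u}_1 = \mathfrak{u}$, given any $u \in \mathfrak{u}$ one uses the surjectivity of squaring on $F^{\times}$ to find $v \in \mathfrak{u}$ with $\overline{v}^2 = \overline{u}$; then $u v^{-2} \in \mathfrak{u}_1$, so $u \in v^2 \mathfrak{u}_1 \subseteq \mathfrak{u}^2 \mathfrak{u}_1$.

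There is no real obstacle here; the only subtlety is that in a mixed characteristic unramified extension of $\mathbb{Q}_2$ one must be careful that the relation $-1 \equiv 1 \pmod{\mathfrak{m}}$ is what collapses the kernel of squaring on $\mathfrak{u}/\mathfrak{u}_1$, and that the characteristic $2$ Frobenius on $F^{\times}$ makes $F^{\times 2} = F^{\times}$ (in contrast to the odd residue characteristic case treated earlier in the paper). Both facts were used implicitly above.
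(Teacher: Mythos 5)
Your proof is correct, and it reaches the conclusion by a related but noticeably different packaging than the paper's. The paper introduces the squaring homomorphism $\varphi\colon \mathfrak{u}\to\mathfrak{u}^2$, $x\mapsto x^2$, notes that $\ker\varphi=\{\pm1\}$ lies in $\mathfrak{u}_1$ because $-1=1+(-2)\in 1+\mathfrak{m}$, and then cites an index formula from Gerstein (Lemma 3.47) to get, in one line,
\[
[\mathfrak{u}:\mathfrak{u}_1]=[\varphi(\mathfrak{u}):\varphi(\mathfrak{u}_1)]\,[\ker\varphi:\ker\varphi\cap\mathfrak{u}_1]=[\mathfrak{u}^2:\mathfrak{u}_1^2]\cdot 1.
\]
You instead apply the second isomorphism theorem and reduce matters to the two set identities $\mathfrak{u}^2\cap\mathfrak{u}_1=\mathfrak{u}_1^2$ and $\mathfrak{u}^2\mathfrak{u}_1=\mathfrak{u}$, which you verify directly: the first via injectivity of Frobenius on $F$ (which is the same content as "$\ker\varphi\subseteq\mathfrak{u}_1$"), the second via surjectivity of squaring on $F^\times$ (i.e.\ $|F^\times|$ odd). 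What the paper's approach buys is brevity, by outsourcing the bookkeeping to a quoted lemma; what yours buys is a self-contained argument that makes explicit which two features of residue characteristic $2$ drive the result, and that $\mathfrak{u}^2/\mathfrak{u}_1^2$ is literally isomorphic to $\mathfrak{u}/\mathfrak{u}_1$ (not merely of the same cardinality). The underlying insight—$\{\pm1\}\subseteq\mathfrak{u}_1$—is shared by both.
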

\begin{proof}
We shall follow the proofs from \cite{Gers}. Let $\varphi: \mathfrak{u} \longrightarrow \mathfrak{u}^2; x \longrightarrow x^2$. Then $\ker \varphi=\{\pm 1\}$, and $-1=1+(-2)\in 1+\mathfrak{m}$. By \cite[Lmm.3.47]{Gers}, $ [\mathfrak{u}:\mathfrak{u}_1]=[\varphi(\mathfrak{u}): \varphi(\mathfrak{u}_1)][\ker \varphi: \ker\varphi\cap \mathfrak{u}_1]=[\mathfrak{u}^2: \mathfrak{u}_1^2]$.
\end{proof}
Let $ \mathfrak{u}^2\Sp(\mathscr{W}_{ \mathfrak{O}})=\{ g\in \GSp(\mathscr{W}_{ \mathfrak{O}})\mid \lambda_g\in \mathfrak{u}^2\}$.
Let $\mathfrak{u}^2P's(\mathscr{W}_{ \mathfrak{O}})=\{ (g,q)\in GP's(\mathscr{W}_{ \mathfrak{O}})\mid  g\in \mathfrak{u}^2\Sp(\mathscr{W}_{ \mathfrak{O}})\}$, $\mathfrak{u}^2_1\Gamma_1=G\Gamma_1 \cap  \mathfrak{u}^2\Sp(\mathscr{W}_{ \mathfrak{O}})$, and $\mathfrak{u}^2_1P's(\mathscr{W}_{ \mathfrak{O}})_1=GP's(\mathscr{W}_{ \mathfrak{O}})_1 \cap  \mathfrak{u}^2P's(\mathscr{W}_{ \mathfrak{O}})$.  Note that for $(g,q)\in \mathfrak{u}^2_1\Gamma_1$, $\lambda_g\in \mathfrak{u}^2_1$.

 Follow the notations of Section \ref{ASG}. Then we have the following diagram analogue of $(\ref{qeq})$.
\begin{equation}\label{qeq2}
\begin{CD}
@. 1@. 1 @.1 @. \\
@.@VVV @VVV @VVV @.\\
1 @>>> \mathscr{W}_{ \mathfrak{O}}^{''\vee} @>>> \mathfrak{u}_1^2P's(\mathscr{W}_{ \mathfrak{O}})_1@>>> \mathfrak{u}^2_1\Gamma_1 @>>> 1\\
@.@VVV @VVV @VVV @.\\
1 @>>> \mathscr{W}_{ \mathfrak{O}}^{'\vee} @>>> \mathfrak{u}^2P's(\mathscr{W}_{ \mathfrak{O}})@>>> \mathfrak{u}^2\Sp(\mathscr{W}_{ \mathfrak{O}}) @>>> 1\\
@.@VVV @VVV @VVV @.\\
1@>>>W^{\vee} @>>> AGSp(W) @>>>\GSp(W) @>>> 1\\
@.@VVV @VVV @VVV @.\\
@. 1@. 1 @.1 @.
\end{CD}
\end{equation}

\subsection{$GMp(\mathscr{W})$ case}
By Section \ref{GLGU},  $ AGSp(W) \simeq  ASp(W) \rtimes F^{\times}$, and  there exists an exact sequence: $1\longrightarrow \mu_8 \longrightarrow AMp(W)\longrightarrow ASp(W) \longrightarrow 1$. Note that $P's(\mathscr{W}_{ \mathfrak{O}})/P's(\mathscr{W}_{ \mathfrak{O}})_1 \simeq  ASp(W)$, $\Sp(\mathscr{W}_{ \mathfrak{O}})/\Gamma'_1\simeq \Sp(W)$.
The $2$-cocycle attached to $AMp(W)$ is given by the $2$-cocycle $c_{\Pi^{'\sim}_{\Psi}}(-,-)$, which satisfies some  properties  as shown in  Lemma \ref{reduction}. Moreover, $c_{\Pi^{'\sim}_{\Psi}}|_{\Sp(\mathscr{W}_{ \mathfrak{O}}) \times \Gamma'_1}=1=c_{\Pi^{'\sim}_{\Psi}}|_{\Gamma'_1 \times \Sp(\mathscr{W}_{ \mathfrak{O}})}$.
In \cite{Ba}, Barthel can extend the  $2$-cocycle $c_{\Pi^{'\sim}_{\Psi}}(-,-)$ from $\Sp( \mathscr{W}_{ \mathfrak{O}})$ to $ \GSp(\mathscr{W}_{ \mathfrak{O}})$. For simplicity, we also write  $c_{\Pi^{'\sim}_{\Psi}}(-,-)$. By \cite[Sects.1.2.1-1.2.3]{Ba},
$c_{\Pi^{'\sim}_{\Psi}}|_{\mathfrak{u}^2\Sp(\mathscr{W}_{ \mathfrak{O}}) \times \mathfrak{u}_1^2\Gamma_1'}=1=c_{\Pi^{'\sim}_{\Psi}}|_{\mathfrak{u}_1^2\Gamma_1' \times \mathfrak{u}^2\Sp(\mathscr{W}_{ \mathfrak{O}}) }$.  By using Barthel's $c_{\Pi^{'\sim}_{\Psi}}$, we also obtain  an exact sequence: $1\longrightarrow \mu_8 \longrightarrow AGMp(W)\longrightarrow AGSp(W) \longrightarrow 1$, and $AGMp(W) \supseteq AMp(W)$.
By \cite[Sects.1.2.1-1.2.3]{Ba}, $c_{\Pi^{'\sim}_{\Psi}}|_{\mathfrak{u}^2\Sp(\mathscr{W}_{ \mathfrak{O}}) \times \mathfrak{u}^2}=1=c_{\Pi^{'\sim}_{\Psi}}|_{ \mathfrak{u}^2\times \mathfrak{u}^2\Sp(\mathscr{W}_{ \mathfrak{O}}) }$,
so $AGMp(W) \simeq AMp(W) \rtimes  F^{\times}$. Hence the Weil representation of $AMp(W)$ can extend to $AGMp(W) $ by adding some  character on $F^{\times}$.

  \section{Appendix: Genestier-Lysenko and Gurevich-Hadani's results}\label{GLGU}
  Let $\Q_2$ be the $2$-adic complete field from the rational number field. Let $K/\Q_2$ be an unramified field extension of degree $d$. Let $\mathfrak{O}_K$ or $\mathfrak{O}$ denote  the ring of integers of $K$. Then $2$ is one uniformizer of $K$. Let  $\mathfrak{m}^n=2^n \mathfrak{O}$. Let $F=  \mathfrak{O}_K/ \mathfrak{m}_K$. Let  $R=  \mathfrak{O}_K/ \mathfrak{m}^2_K $.   Let $(\widetilde{W}, \langle, \rangle_{\widetilde{W}})$ be a free symplectic module over $R$ with a symplectic basis $\{e_1, \cdots, e_m; e_1^{\ast}, \cdots, e_m^{\ast}\}$. Let $\widetilde{X}=\Span_R\{e_1, \cdots, e_m\}$, $\widetilde{X^{\ast}}=\Span_R\{e_1^{\ast}, \cdots, e_m^{\ast}\}$.  For $w=x+x^{\ast}$, $w'=y+y^{\ast}$, $x=\sum_{i=1}^m e_ix_i$, $x^{\ast}=\sum_{i=1}^m  e_i^{\ast}x_i^{\ast}$, $y=\sum_{i=1}^m e_iy_i$, $y^{\ast}=\sum_{i=1}^m  e_i^{\ast}y_i^{\ast}$,
  $$\langle w,w' \rangle_{\widetilde{W}}= \sum_{i=1}^m x_i y_i^{\ast}-\sum_{j=1}^m y_j x_j^{\ast}.$$
Let $\widetilde{\beta}(-,-): \widetilde{W} \times \widetilde{W}\longrightarrow R; (x+x^{\ast}, y+y^{\ast}) \longmapsto \langle x,y^{\ast} \rangle_{\widetilde{W}}$.  Then $\langle w,w' \rangle_{\widetilde{W}}=\widetilde{\beta}(w, w')-\widetilde{\beta}(w',w)$.

Let $W=\widetilde{W}/(2)$, which is a  vector space over $F$ of dimension $2m$.   Following \cite{GeLy},\cite{GuHa},  let us define a non-degenerate form $\langle-,-\rangle_W$ on $W$ as follows:
$$\langle, \rangle_W: W \times W \longrightarrow R; (w, w') \longmapsto   2\langle w, w'\rangle_{\widetilde{W}}.$$
It can be checked that it is well-defined. Let $\beta: W\times W \longrightarrow R; (w, w') \longmapsto 2\widetilde{\beta}(w,w')$.  Then $\langle w,w' \rangle_{W}=\beta(w, w')-\beta(w',w)$.
Let $H_{\beta}(W)=W\times R$, denote   the corresponding Heisenberg group, defined as follows:
 $$(w, t)+(w',t')=(w+w', t+t'+\beta(w,w')).$$
 Let $\psi: R\longrightarrow \C^{\times}$ be a  faithful character.
 \begin{theorem}[Stone-von Neumann]
 There exists a unique (up to
isomorphism) irreducible Heisenberg representation of $H_{\beta}(W)$ with the central character $\psi$.
 \end{theorem}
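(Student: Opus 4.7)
The plan is to transcribe the classical Stone--von Neumann argument into this finite Heisenberg-type setting, using induction from a Lagrangian polarization together with a dimension count.

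First I would pin down the center and the order. A direct computation shows that the commutator of $(w,t)$ and $(w',t')$ in $H_\beta(W)$ equals $(0,\beta(w,w')-\beta(w',w))=(0,\langle w,w'\rangle_W)$. Since $\langle,\rangle_W$ is non-degenerate on $W$, the center $Z$ of $H_\beta(W)$ is exactly $\{0\}\times R$; in particular $|H_\beta(W)|=|F|^{2m}\cdot |R|$ and $|H_\beta(W)|/|Z|=|F|^{2m}$.

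Next, for existence of an irreducible with central character $\psi$, I would polarize. The splitting $\widetilde{W}=\widetilde{X}\oplus\widetilde{X}^{\ast}$ descends to $W=X\oplus X^{\ast}$, and because $\widetilde\beta$ vanishes on $\widetilde X\times \widetilde X$, the same holds for $\beta$ on $X\times X$ in $R$. Hence $A:=X\times R$ is an abelian subgroup of $H_\beta(W)$ containing $Z$, and $\chi(x,t):=\psi(t)$ is a well-defined character of $A$ with $\chi|_Z=\psi$. I would then set $\pi_\psi:=\Ind_A^{H_\beta(W)}\chi$, whose dimension is $[H_\beta(W):A]=|F|^m$.

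For irreducibility and uniqueness, I would invoke the sum-of-squares identity: in any finite group $H$ with abelian center $Z$ and a fixed central character $\psi$ of $Z$, the irreducibles of $H$ whose central character equals $\psi$ satisfy $\sum_{\sigma}(\dim\sigma)^2=|H|/|Z|$. Here this sum equals $|F|^{2m}=(\dim\pi_\psi)^2$, so a single irreducible quotient of $\pi_\psi$ already saturates the bound; this forces $\pi_\psi$ itself to be irreducible and simultaneously shows that no other irreducible of $H_\beta(W)$ with central character $\psi$ exists. (An alternative route is to apply Mackey's irreducibility criterion to $\chi$, using that $X$ is a maximal isotropic subspace, and to deduce uniqueness from Clifford theory.)

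The point that demands most care is the bookkeeping of the $R$-valued (rather than $F$-valued) cocycle $\beta$: one must verify that $\chi$ is genuinely a character of $A$, that $\pi_\psi$ has central character exactly $\psi$ and not some twist, and that $Z=\{0\}\times R$ is the full center (not merely a central subgroup). Once these identifications are settled, the argument is a direct transcription of the classical proof as in \cite[I.3]{MoViWa}, \cite[pp.9--11]{Bl}, \cite[pp.8--9]{Ta}.
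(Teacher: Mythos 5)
The paper itself gives no proof of this statement---it simply cites \cite[Thm.1.2]{GuHa}---so your proposal must be judged on its own merits against the classical Stone--von Neumann argument. Your architecture (identify the center, polarize $W=X\oplus X^{\ast}$, induce $\chi(x,t)=\psi(t)$ from $A=X\times R$, count dimensions) is the right one, and the $R$-valued bookkeeping you flag is in fact fine: $\beta|_{X\times X}=0$ since one may lift into $\widetilde{X}$, so $A$ is abelian and $\chi$ is a character; the commutator of $(w,t)$ and $(w',t')$ is $(0,\langle w,w'\rangle_W)$, so non-degeneracy gives $Z=\{0\}\times R$; and $[H_\beta(W):A]=|F|^m$ while $|H_\beta(W)|/|Z|=|F|^{2m}$.

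The gap is in the sentence ``a single irreducible quotient of $\pi_\psi$ already saturates the bound.'' The identity $\sum_{\sigma:\omega_\sigma=\psi}(\dim\sigma)^2=|F|^{2m}$ only yields the \emph{upper} bound $\dim\sigma\le|F|^m$ for an arbitrary irreducible $\sigma$ with central character $\psi$; it says nothing about an irreducible constituent of $\pi_\psi$ attaining dimension exactly $|F|^m$, and a priori $\pi_\psi$ could split into several small pieces while the identity remains consistent. You need a matching \emph{lower} bound, which in the classical treatment (cf.\ \cite[I.3]{MoViWa}) comes from restricting to $A$: any irreducible $\sigma$ with central character $\psi$ has $\Res_A\sigma$ supported on the $|A/Z|=|F|^m$ characters of $A$ extending $\psi$, and $H_\beta(W)/A$ permutes these transitively by conjugation (this is exactly where the non-degeneracy of $\langle,\rangle_W$ together with the non-triviality of $\psi$ on $2R$ is used), forcing $\dim\sigma\ge|F|^m$. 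Only then do the two bounds give $\dim\sigma=|F|^m$, hence irreducibility of $\pi_\psi$ and uniqueness in one stroke. Your parenthetical Mackey alternative is sound and arguably cleaner: $A$ is normal since $[H_\beta(W),H_\beta(W)]=\{0\}\times 2R\subseteq Z\subseteq A$, the nontrivial cosets move $\chi$ by the same non-degeneracy argument, so $\Ind_A^{H_\beta(W)}\chi$ is irreducible, and uniqueness then does follow from the now-saturated sum of squares. I would promote that to the main argument rather than leave it as an aside.
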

 \begin{proof}
 See \cite[Thm.1.2]{GuHa}.
 \end{proof}
 \subsubsection{Affine symplectic group}
 Following \cite{GeLy},\cite{GuHa}, for $g\in \Sp(W)$,  we let $\Sigma_g$  be the set  of   ``quadratic functions'' $q$ (see \cite{GeLy}, \cite{GuHa} for details) from $W$ to $R$ such that
 \begin{equation}\label{equiv1}
 q(w_1+w_2)-q(w_1)-q (w_2)= \beta(w_1g, w_2g)-\beta(w_1, w_2)
 \end{equation}
 Another  function  $q'\in \Sigma_g$ iff  $q-q'$ is an additive quadratic function.  The affine symplectic group $ASp(W)=\{ ( g, q)\mid  g\in \Sp(W), q\in \Sigma_g \}$, with the group law $$( g, q) ( g', q')=(gg', q^{''}),$$
 where $q^{''}(w)=q(w)+q'(wg)$, for $w\in W$.
 \begin{lemma}
For any $g\in \Sp(W)$, $\Sigma_g\neq \emptyset$. Moreover,  there exists a non-split group extension: $1\longrightarrow W^{\vee} \longrightarrow ASp(W) \longrightarrow \Sp(W) \longrightarrow 1$, where $W^{\vee}=\Hom(W, R)$.
 \end{lemma}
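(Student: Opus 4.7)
The plan is to handle the two assertions in sequence: produce an explicit $q_g$ in $\Sigma_g$ (mirroring formula (\ref{B}) for $\Ps(\mathscr{W})$ but done in the $2$-adic setting), then build the extension by the obvious projection, and finally argue non-splitness by combining the kernel computation with an obstruction-type calculation following \cite{GeLy}, \cite{GuHa}.

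For existence, I would fix the polarization $W = X \oplus X^{\ast}$ coming from $\widetilde{W} = \widetilde{X}\oplus \widetilde{X}^{\ast}$ and, for $g = \begin{pmatrix} a & b\\ c & d\end{pmatrix}$, set
\begin{equation*}
q_g(x+x^{\ast}) \;=\; \widetilde{\beta}(\tilde{x}a, \tilde{x}b) + \widetilde{\beta}(\tilde{x}^{\ast}c, \tilde{x}^{\ast}d) + \beta(x^{\ast}c, xb),
\end{equation*}
where $\tilde{x}, \tilde{x}^{\ast}\in \widetilde{W}$ are chosen lifts. The first two terms are the even-characteristic replacement of the ``$\tfrac{1}{2}B$'' terms in (\ref{B}): since $\beta = 2\widetilde{\beta}$ takes values in $\mathfrak{m}/\mathfrak{m}^2$, the integer $2$ has no inverse in $R$, so $\tfrac{1}{2}\beta$ is only representable through $\widetilde{\beta}$ after lifting. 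Changing the lift $\tilde{x}\mapsto \tilde{x}+2v$ alters $q_g$ by $\beta(va,xb)+\beta(xa,vb)$, which is additive in $x$, hence lies in $W^{\vee}$; so $q_g$ is well-defined up to an element of $W^{\vee}$, which is exactly the ambiguity allowed by the definition of $\Sigma_g$.

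Next I would verify (\ref{equiv1}) by a direct expansion. The polarization identity applied to $q_g(w_1+w_2)-q_g(w_1)-q_g(w_2)$ yields cross terms
\begin{equation*}
\widetilde{\beta}(\tilde{x}_1 a,\tilde{x}_2 b)+\widetilde{\beta}(\tilde{x}_2 a,\tilde{x}_1 b)+\widetilde{\beta}(\tilde{x}_1^{\ast} c,\tilde{x}_2^{\ast} d)+\widetilde{\beta}(\tilde{x}_2^{\ast} c,\tilde{x}_1^{\ast} d)+\beta(x_1^{\ast}c,x_2 b)+\beta(x_2^{\ast}c,x_1 b),
\end{equation*}
while $\beta(w_1 g,w_2 g)-\beta(w_1,w_2)$, expanded with $w_i g = (x_i a + x_i^{\ast} c)+(x_i b + x_i^{\ast} d)$ and reduced modulo $\mathfrak{m}^2$, produces the same combination once the symplectic relations $ab^{\top}=ba^{\top}$, $cd^{\top}=dc^{\top}$, $ad^{\top}-bc^{\top}=1$ on $g\in \Sp(W)$ are invoked (remembering that $4=0$ in $R$ kills any ``$2\widetilde{\beta}(2v,\cdot)$'' contributions from the lift). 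This gives $\Sigma_g\neq\emptyset$ for every $g$.

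For the exact sequence, the projection $p:ASp(W)\to \Sp(W)$, $(g,q)\mapsto g$, is a group homomorphism by the very law $(g,q)(g',q')=(gg',q+q'\circ g)$, and it is surjective by the first part. Its kernel is $\{(1,q):q\in \Sigma_1\}$, and for $g=1$ the defining relation (\ref{equiv1}) degenerates to $q(w_1+w_2)=q(w_1)+q(w_2)$; thus $\ker p$ is precisely $W^{\vee}=\Hom_{\mathrm{group}}(W,R)$, giving $1\to W^{\vee}\to ASp(W)\to \Sp(W)\to 1$.

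The hard part will be \emph{non-splitness}. A section $s:\Sp(W)\to ASp(W)$, $g\mapsto (g,q_g)$, is the data of a strict $1$-cocycle $q_{gh}=q_g+q_h\circ g$ valued in the set of quadratic functions on $W$ with prescribed polarization. In odd residual characteristic the $\tfrac{1}{2}$ in the odd-case formula (\ref{B}) furnishes such a section canonically, but here the absence of $\tfrac{1}{2}\in R$ is exactly the obstruction. Following the point of view of \cite{GeLy} and \cite{GuHa}, I would pick a symplectic transvection $T_v\in\Sp(W)$ (so $T_v^2$ lies in the unipotent radical and can be computed directly) and show that any hypothetical cocycle would force a relation in $R$ requiring a square root of $2$, which does not exist in $R=\mathfrak{O}/\mathfrak{m}^2$. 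Concretely, the obstruction class in $H^2(\Sp(W),W^{\vee})$ is represented by the Arf-type invariant of \cite{GuHa}, and showing its nontriviality on a suitable pair of generators (an upper and a lower transvection sharing the same root) reduces to verifying that the resulting element of $W^{\vee}$ is not a coboundary. This reduction to a finite, character-theoretic calculation on a $2$-dimensional subspace of $W$ is the real content; the rest of the proof is bookkeeping.
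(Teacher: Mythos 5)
Your existence construction is essentially the idea the paper itself uses: it appears, following \cite{GuHa}, in the proof of the very next lemma (the $\GSp$ case) in the compact form $q_{\widetilde g}(\widetilde w)=\widetilde{\beta}(\widetilde w\widetilde g,\widetilde w\widetilde g)-\widetilde{\beta}(\widetilde w,\widetilde w)$, with $\widetilde g$ a lift of $g$; for the lemma at hand the paper simply cites \cite{GeLy} and \cite{GuHa}. Two loose ends in your version: to make $\tilde x a$ meaningful you must also lift the blocks $a,b,c,d$ — that is, lift $g$ to $\Sp(\widetilde W)$, which uses surjectivity of $\Sp(\widetilde W)\to\Sp(W)$ silently — and the verification of~(\ref{equiv1}) requires the lifted matrix to be symplectic over $R$, not merely the $F$-level relations you quote; also your well-definedness check varies only the lift of the vector $x$, not the lift of $g$.

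The genuine gap is non-splitness. What you offer is a plan, not a proof: you name transvections and ``the obstruction class in $H^2(\Sp(W),W^{\vee})$'' and the slogan ``no square root of $2$ in $R$,'' but you never exhibit a cocycle and show it is not a coboundary, and you flag the omission yourself (``the real content''). The sketch is also off in one detail: in characteristic $2$ a symplectic transvection satisfies $T_v^2=1$, so saying ``$T_v^2$ lies in the unipotent radical and can be computed directly'' is vacuous. The correct thrust of a transvection argument would be to show that every preimage $(T_v,q)\in ASp(W)$ of $T_v$ has order strictly greater than $2$, i.e.\ that $q+q\circ T_v\ne 0$ for every $q\in\Sigma_{T_v}$, a concrete calculation with the group law and~(\ref{equiv1}) that the proposal does not carry out. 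Until that (or an equivalent $H^2$ computation) is supplied, the ``non-split'' half of the lemma is asserted rather than proved.
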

 \begin{proof}
 See \cite[Section 2.2]{GeLy}, \cite[Section 1.3]{GuHa}.
 \end{proof}

  For $g\in \GSp(W)$,  let $\Sigma_g$ also  denote to be  the set  of quadratic functions $q$ from $W$ to $R$ such that
 \begin{equation}\label{equiv}
 q(w_1+w_2)-q(w_1)-q (w_2)= \beta(w_1g, w_2g)-\lambda_g\beta(w_1, w_2)
 \end{equation}
 where $\lambda: \GSp(W) \longrightarrow F^{\times}$,   the similitude factor.  The affine similitude  symplectic group $AGSp(W)=\{ ( g, q)\mid  g\in \Sp(W), q\in \Sigma_g \}$, with the group law $$( g, q) ( g', q')=(gg',q''),$$
 where    $q^{''}(w)=\lambda_{g'}q(w)+q'(wg)$,  for $w\in W$.

 \begin{lemma}
\begin{itemize}
\item[(1)]  There exists an exact sequence: $1\longrightarrow W^{\vee} \longrightarrow AGSp(W) \longrightarrow \GSp(W) \longrightarrow 1$.
\item[(2)] There exists a split exact sequence: $1\longrightarrow ASp(W)\longrightarrow AGSp(W) \longrightarrow F^{\times} \longrightarrow 1$.
    \end{itemize}
\end{lemma}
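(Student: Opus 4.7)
The plan is to mimic the structure of Lemma~3.2 (the split exact sequence for $\GPs(\mathscr{W})$) established earlier in the paper, with the quadratic refinement replacing the analytic pseudosymplectic condition. Both parts reduce to constructing explicit sections and checking the group law.

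For part~(1), the projection $\pi: AGSp(W) \to \GSp(W), (g,q) \mapsto g$, is a group homomorphism directly from the definition $(g,q)(g',q')=(gg',q'')$. Its kernel consists of pairs $(1,q)$ with $q\in \Sigma_1$; but the defining relation $(\ref{equiv})$ with $g=1$, $\lambda_g=1$ forces $q(w_1+w_2)=q(w_1)+q(w_2)$, i.e.\ $q\in W^{\vee}=\Hom(W,R)$. Thus the only nontrivial point is surjectivity of $\pi$, i.e.\ showing $\Sigma_g\neq\emptyset$ for every $g\in\GSp(W)$. For $g\in\Sp(W)$ this is already contained in the preceding lemma. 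For $h_t=\bigl(\begin{smallmatrix}1 & 0\\0 & t\end{smallmatrix}\bigr)$ with $t\in F^{\times}$, one checks directly that the zero function belongs to $\Sigma_{h_t}$: indeed, for $w_i=x_i+x_i^{\ast}$ one has
\[
\beta(w_1 h_t,w_2 h_t)=\langle x_1,\,tx_2^{\ast}\rangle_W = t\langle x_1,x_2^{\ast}\rangle_W =\lambda_{h_t}\beta(w_1,w_2),
\]
so the right-hand side of $(\ref{equiv})$ vanishes. Given an arbitrary $g\in\GSp(W)$, write $g=g_0 h_t$ with $g_0\in\Sp(W)$ and $t=\lambda_g$; then $(g_0,q_{g_0})(h_t,0)=(g,q'')$ with $q''(w)=\lambda_{h_t}q_{g_0}(w)+0$, which lies in $\Sigma_g$. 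This simultaneously produces an explicit section of sets $\alpha:\GSp(W)\to AGSp(W)$.

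For part~(2), the map $\lambda': AGSp(W)\to F^{\times}$ is the composition $(g,q)\mapsto g\mapsto \lambda_g$, clearly a homomorphism. Its kernel is $\{(g,q)\in AGSp(W)\mid \lambda_g=1\}$, and because the group law on $ASp(W)$ (where $\lambda_{g'}=1$) coincides with the restriction of the group law on $AGSp(W)$, this kernel is exactly $ASp(W)$. To split the sequence, I would define
\[
s: F^{\times}\longrightarrow AGSp(W),\qquad t\longmapsto (h_t,0),
\]
using the computation above that $0\in\Sigma_{h_t}$. For $t,t'\in F^{\times}$ the product $s(t)s(t')=(h_t h_{t'},q'')$ has $q''(w)=\lambda_{h_{t'}}\cdot 0+0(wh_t)=0$, so $s(t)s(t')=(h_{tt'},0)=s(tt')$, confirming that $s$ is a homomorphism; and $\lambda'\circ s=\mathrm{id}_{F^{\times}}$ by construction.

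The only step requiring genuine care is confirming that $\Sigma_g$ is non-empty for all $g\in\GSp(W)$, which I handle by the reduction $g=g_0 h_t$ above together with the already-known symplectic case. Everything else amounts to rewriting the pseudosymplectic argument of Lemma~3.2 in the quadratic-function setting; no new algebraic obstruction appears because the similitude factor enters $(\ref{equiv})$ linearly and is constant on each coset $\Sp(W)h_t$.
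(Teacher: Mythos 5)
Your proposal is correct and essentially follows the same route as the paper: both produce a set-theoretic transversal $\GSp(W) \to AGSp(W)$ and both observe that $t \mapsto (h_t, 0)$ gives the splitting in part (2). The difference is how surjectivity, i.e.\ $\Sigma_g \neq \emptyset$ for all $g\in\GSp(W)$, is established. The paper gives a uniform formula: lift $g$ to $\widetilde{g} \in \GSp(\widetilde{W})$, set $q_{\widetilde{g}}(\widetilde{w}) = \widetilde{\beta}(\widetilde{w}\widetilde{g},\widetilde{w}\widetilde{g}) - \lambda_{\widetilde{g}}\widetilde{\beta}(\widetilde{w},\widetilde{w})$, and then verify that this descends to $W$, is a quadratic function in the Genestier--Lysenko/Gurevich--Hadani sense, and satisfies the defining relation for $\Sigma_g$ --- a computation which, specialized to $\Sp(W)$, reproduces the content of the preceding lemma. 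You instead factor the problem: quote the symplectic case, check $0 \in \Sigma_{h_t}$ using that $h_t$ rescales $\beta$ exactly by $\lambda_{h_t}$, and combine via $g = g_0 h_{\lambda_g}$ and the group law. This is cleaner and avoids the redundant quadraticity verification; it also produces the same section, since $q_{\widetilde{g_0}\widetilde{h}_t} = t\,q_{\widetilde{g}_0}$. The one small thing worth making explicit --- rather than appealing to closure of the group operation, which is itself implicit in the paper's definition of $AGSp(W)$ --- is the one-line check that $t\,q_{g_0}$ does satisfy the defining relation for $\Sigma_{g_0 h_t}$: this follows from $\beta(w_1 h_t, w_2 h_t) = t\,\beta(w_1,w_2)$, $\lambda_{g_0 h_t} = t$, and $q_{g_0}\in\Sigma_{g_0}$.
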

\begin{proof}
1) Follow the proof of  \cite[Lmm.1.1]{GuHa}. For any $g\in \GSp(W)$, let $\widetilde{g}$ be one of its lifting in $\GSp(\widetilde{W})$.
  Note that $\langle w,w'\rangle= 2 \langle \widetilde{w}, \widetilde{w'}\rangle_{\widetilde{W}} $, $\beta(w,w')=2\widetilde{\beta}(\widetilde{w}, \widetilde{w'})$,  for any lifting $\widetilde{w}$ of $w$, and $ \widetilde{w'}$ of $w'$.
  Then: $$\lambda_g\langle w,w'\rangle=\langle wg,w'g\rangle =2 \langle \widetilde{w}\widetilde{g}, \widetilde{w'}\widetilde{g}\rangle_{\widetilde{W}} =2 \lambda_{\widetilde{g}}\langle \widetilde{w}, \widetilde{w'}\rangle_{\widetilde{W}},$$
  $$\beta(wg,w'g)=2\widetilde{\beta}(\widetilde{w}\widetilde{g}, \widetilde{w'}\widetilde{g}), \quad \lambda_g\beta(w,w')= 2  \lambda_{\widetilde{g}}\widetilde{\beta}(\widetilde{w}, \widetilde{w'}).$$
Let $q_{\widetilde{g}}(\widetilde{w})=\widetilde{\beta}(\widetilde{w}\widetilde{g}, \widetilde{w}\widetilde{g})-\lambda_{\widetilde{g}}\widetilde{\beta}(\widetilde{w}, \widetilde{w})$, for $\widetilde{w}\in \widetilde{W}$. Then:
$$q_{\widetilde{g}}(\widetilde{w}+2\widetilde{w}')=q_{\widetilde{g}}(\widetilde{w})+2\widetilde{\beta}(\widetilde{w}\widetilde{g}, \widetilde{w}'\widetilde{g})+2\widetilde{\beta}(\widetilde{w}'\widetilde{g}, \widetilde{w}\widetilde{g})-2\lambda_{\widetilde{g}}\widetilde{\beta}(\widetilde{w}', \widetilde{w})-2\lambda_{\widetilde{g}}\widetilde{\beta}(\widetilde{w}, \widetilde{w}')$$
$$=q_{\widetilde{g}}(\widetilde{w})+\beta(wg, w'g)+\beta(w'g, wg)-\lambda_{g}\beta(w',w)-\lambda_{g}\beta(w, w')$$
$$=q_{\widetilde{g}}(\widetilde{w})+\langle w'g, wg\rangle-\lambda_{g}\langle w', w\rangle=q_{\widetilde{g}}(\widetilde{w}).$$
So $q_{\widetilde{g}}$ defines a function from $W$ to $R$. Moreover, $q_{\widetilde{g}}$ is a quadratic function. (cf.\cite{GeLy}). Let us check the equality (\ref{equiv}).
$$q_{\widetilde{g}}(\widetilde{w}+\widetilde{w}')-q_{\widetilde{g}}(\widetilde{w}')-q_{\widetilde{g}}(\widetilde{w}')$$
$$=\widetilde{\beta}((\widetilde{w}+\widetilde{w}')\widetilde{g}, (\widetilde{w}+\widetilde{w}')\widetilde{g})-\lambda_{\widetilde{g}}\widetilde{\beta}(\widetilde{w}+\widetilde{w}', \widetilde{w}+\widetilde{w}')-\widetilde{\beta}(\widetilde{w}\widetilde{g}, \widetilde{w}\widetilde{g})+\lambda_{\widetilde{g}}\widetilde{\beta}(\widetilde{w}, \widetilde{w})-\widetilde{\beta}(\widetilde{w}'\widetilde{g}, \widetilde{w}'\widetilde{g})+\lambda_{\widetilde{g}}\widetilde{\beta}(\widetilde{w}', \widetilde{w}')$$
$$=\widetilde{\beta}(\widetilde{w}'\widetilde{g}, \widetilde{w}\widetilde{g})+\widetilde{\beta}(\widetilde{w}\widetilde{g}, \widetilde{w}'\widetilde{g})-\lambda_{\widetilde{g}}\widetilde{\beta}(\widetilde{w}', \widetilde{w})-\lambda_{\widetilde{g}}\widetilde{\beta}(\widetilde{w}, \widetilde{w}')$$
$$=\langle\widetilde{w}'\widetilde{g}, \widetilde{w}\widetilde{g}\rangle+2\widetilde{\beta}(\widetilde{w}\widetilde{g}, \widetilde{w}'\widetilde{g})-\lambda_{\widetilde{g}}\langle\widetilde{w}', \widetilde{w}\rangle-2\lambda_{\widetilde{g}}\widetilde{\beta}(\widetilde{w}, \widetilde{w}')$$
$$=2\widetilde{\beta}(\widetilde{w}\widetilde{g}, \widetilde{w}'\widetilde{g})-2\lambda_{\widetilde{g}}\widetilde{\beta}(\widetilde{w}, \widetilde{w}')$$
$$=\beta(wg, w'g)-\lambda_{g}\beta(w, w').$$
So there exists a transversal map $\alpha: \GSp(W) \longrightarrow AGSp(W); g \longrightarrow (g, q_{\widetilde{g}})$.\\
2) $\GSp(W)\simeq \Sp(W) \rtimes F^{\times}$, where $h: F^{\times } \longrightarrow \GSp(W); t \longmapsto h_t= \begin{pmatrix} 1&0\\0 & t\end{pmatrix}$. Then $\alpha(h_t)=(h_t, 0)$. So $\alpha|_{F^{\times}}$ is a group homomorphism.
\end{proof}

  The affine symplectic group  can act on $H_{\beta}(W)$ in the following way: $(w, t)( g, q) =(wg, t+q(w))$. This action will extend  the above Heisenberg reprentation  to a projective representation of $ ASp(W)\ltimes H_{\beta}(W) $.  Let $\mu_4=\{e^{\frac{2k\pi i}{4}}\mid 1\leq k\leq 4\}\subseteq \C$.
\begin{theorem}
 There exists a non-trivial central   extension: $1\longrightarrow \mu_4 \longrightarrow AMp(W)\longrightarrow ASp(W) \longrightarrow 1$,  such that $\pi_{\psi}$ can extend to be a representation of $AMp(W)\ltimes H_{\beta}(W)$.
 \end{theorem}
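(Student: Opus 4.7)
The plan is to assemble the theorem from the lattice-model machinery already built in Sections \ref{ASG} and the $\mu_4$ subsection. All the pieces are essentially in place: it remains to package them and check non-triviality.

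First I would take as input the data constructed on $V_\psi$. By Lemma \ref{mainlmm2}, the pair $(\Pi'_\Psi,V_\psi)$ is a projective representation of $P's(\mathscr{W}_{\mathfrak{O}})\ltimes H_{B'}(\mathscr{W}_{\mathfrak{O}})$, and by Lemma \ref{HH} and the calculations of Cases 1--5 together with the discussion after Corollary \ref{exgamma1}, the subgroup $P's(\mathscr{W}_{\mathfrak{O}})_1\ltimes H_{B'}(\mathscr{W}_{\mathfrak{m}})$ acts trivially after the modification by $t'$; hence $\Pi'^{\sim}_\Psi$ descends to a projective representation of
\[
P's(\mathscr{W}_{\mathfrak{O}})\ltimes H_{B'}(\mathscr{W}_{\mathfrak{O}})\big/P's(\mathscr{W}_{\mathfrak{O}})_1\ltimes H_{B'}(\mathscr{W}_{\mathfrak{m}})\;\simeq\; ASp(W)\ltimes H_\beta(W),
\]
realizing the Heisenberg representation of $H_\beta(W)$ with central character $\psi$ (the latter by the corollary after Lemma \ref{HH}). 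The associated $2$-cocycle $c_{\Pi^{'\sim}_\Psi}$ depends only on the images in $\Sp(W)$ by Lemma \ref{reduction}, and a priori takes values in $\mu_8$.

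Next I would cut the cocycle down to $\mu_4$. Using the Perrin--Rao normalizing constants $m(g)=\gamma(x(g),\tfrac12\Psi)^{-1}\{\gamma(\tfrac12\Psi)\}^{-|S|}$ for $g=p_1\omega_S p_2$, one passes from $c_{PR,\mathcal{X}^\ast}$ to the $\mu_2$-valued cocycle $\widetilde{c_{PR,\mathcal{X}^\ast}}$, and then modifies $\Pi'^{\sim}_\Psi$ to $\Pi''^{\sim}_\Psi$ as in the $\mu_4$ subsection. The lemma there shows $c_{\Pi^{''\sim}_\Psi}(g_1,g_2)=\widetilde{c_{PR,\mathcal{X}^\ast}}(s(\dot g_1),s(\dot g_2))\,t''(s(\dot g_1)s(\dot g_2))^{-1}\in\mu_4$ and depends only on $\dot g_1,\dot g_2$. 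Define $AMp(W)$ to be the central extension of $ASp(W)$ by $\mu_4$ determined by this cocycle, so that the underlying set is $ASp(W)\times\mu_4$ with multiplication twisted by $c_{\Pi^{''\sim}_\Psi}$; by construction $\Pi_\psi(g,\zeta)=\zeta\cdot\Pi^{''\sim}_\Psi(\widetilde g)$ (for any lift $\widetilde g\in P's(\mathscr{W}_{\mathfrak{O}})$ of $g\in ASp(W)$) is a genuine representation of $AMp(W)$, and combined with the Heisenberg action of $H_\beta(W)$ already established, it extends to $AMp(W)\ltimes H_\beta(W)$.

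Finally I would check that the extension is non-trivial, which is the main obstacle in this wrapping-up step. The point is that if $AMp(W)\to ASp(W)$ split, the Weil representation would lift to an honest representation of $ASp(W)$, and by restricting to $\Sp(W)\subseteq ASp(W)$ one would obtain a splitting of the Metaplectic cover of $\Sp(W)$ in characteristic two; but by the Perrin--Rao computation the restricted cocycle is known to be non-trivial (see also \cite{GeLy}, \cite{GuHa}), giving a contradiction. Equivalently, one exhibits two elements $\tau_1,\tau_2$ in the Weyl group of $\Sp_{2m}$ whose normalized Perrin--Rao cocycle $\widetilde{c_{PR,\mathcal{X}^\ast}}(s(\tau_1),s(\tau_2))$ is the non-trivial element $-1\in\mu_2\subseteq\mu_4$ and verifies, using Lemma \ref{reduction}(3) and the explicit formulae for $t''$ on the Borel, that this class is not a coboundary; the non-splitting of the resulting $\mu_4$-extension follows. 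The rest is bookkeeping: cocycle identities, well-definedness of the semidirect product action, and the independence statements already verified in Lemma \ref{reduction} and Lemma \ref{GG}.
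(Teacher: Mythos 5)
The paper's own ``proof'' of this theorem is just a citation to \cite[Thm.1.3]{GuHa} and \cite{GeLy}; the theorem appears in the Appendix as background recalled from those works, and the body of Section~\ref{even} then \emph{uses} it (``realizes the projective representation of $ASp(W)\ltimes H_\beta(W)$ attached to $\psi$ as proved in \cite{GeLy}, \cite{GuHa}''). Your proposal instead tries to turn that body material into a self-contained constructive proof, which is a genuinely different and more ambitious route. The packaging of Lemmas~\ref{HH}, \ref{mainlmm1}, \ref{mainlmm2}, \ref{reduction}, and the $\mu_4$-valuedness lemma into a cocycle on $ASp(W)$ is a reasonable plan and is broadly in the spirit of what the paper does.

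However, the non-triviality step---which you yourself flag as ``the main obstacle''---has a genuine gap, and in fact the primary argument you give would fail. You propose to assume the $\mu_4$-cover splits, restrict the resulting honest representation to $\Sp(W)\subseteq ASp(W)$, and derive a contradiction with the non-splitting of the metaplectic cover of $\Sp(W)$ in characteristic two. But $\Sp(W)$ is \emph{not} a subgroup of $ASp(W)$: the extension $1\to W^\vee\to ASp(W)\to\Sp(W)\to 1$ is explicitly stated to be non-split (the appendix Lemma following the definition of $ASp(W)$, citing \cite[Section 2.2]{GeLy}, \cite[Section 1.3]{GuHa}). So there is no canonical copy of $\Sp(W)$ inside $ASp(W)$ to restrict to, and the intended contradiction is not available in this form. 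This is not cosmetic---the whole point of working with $ASp(W)$ rather than $\Sp(W)$ in characteristic two is precisely that $\Sp(W)$ does not sit inside it.

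Your fallback argument is also problematic. You invoke Lemma~\ref{reduction}(3) together with ``explicit formulae for $t''$ on the Borel'' to exhibit a pair of Weyl elements on which the normalized cocycle is $-1$ and then claim this class is not a coboundary. But Lemma~\ref{reduction}(3) states the opposite of what you want here: $c_{\Pi^{'\sim}_{\Psi}}(h_{1/2}\omega_{S_1}h_{1/2}^{-1},h_{1/2}\omega_{S_2}h_{1/2}^{-1})=1$, i.e.\ the modified cocycle is \emph{trivial} on pairs of Weyl elements. Even granting some non-vanishing of $\widetilde{c_{PR,\mathcal{X}^\ast}}$ on a well-chosen pair, ``verifies that this class is not a coboundary'' is exactly the content that must be proved and is left as an assertion. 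To close the gap you would need either a direct cohomological computation in $\Ha^2(ASp(W),\mu_4)$, or to pull the cocycle back along a genuine group homomorphism into the extension (e.g.\ the map $Mp(\widetilde{W})\to AMp(W)$ of the final appendix theorem, or $\Sp(\mathscr{W}_\mathfrak{O})\to ASp(W)$) and show non-triviality there in a way that survives descent---and the latter requires care because such maps are not sections of the cover. As written, the non-triviality is not established, and the construction of $AMp(W)$ is only as strong as the cited sources \cite{GeLy}, \cite{GuHa}.
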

 \begin{proof}
 See \cite[Thm.1.3]{GuHa}, \cite{GeLy}.
 \end{proof}
 As usual, $\pi_{\psi}|_{AMp(W)}$ is called the Weil representation of $AMp(W)$. Recall the above $R$-module $\widetilde{W}$. Let $Mp(\widetilde{W})$ be a non-trivial central extension of $\Sp(\widetilde{W})$ by $\mu_2$.
 \begin{theorem}
 There exists a group homomorphism from $Mp(\widetilde{W})$ to $AMp(W)$.
 \end{theorem}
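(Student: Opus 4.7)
The plan is to construct the homomorphism in two stages: first a base-level homomorphism $\iota : \Sp(\widetilde{W}) \to ASp(W)$, and then a compatible lift of $\iota$ to the metaplectic covers on either side.

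For the base map, given $\tilde{g} \in \Sp(\widetilde{W})$ let $\dot{g}$ denote its reduction in $\Sp(W)$, and for $w \in W$ set
$$ q_{\tilde{g}}(w) := \widetilde{\beta}(\tilde{w}\tilde{g}, \tilde{w}\tilde{g}) - \widetilde{\beta}(\tilde{w}, \tilde{w}) $$
for any lift $\tilde{w} \in \widetilde{W}$ of $w$. This is exactly the construction used in the preceding lemma, specialized to $\lambda_{\tilde{g}} = 1$; the same mod-$4$ calculation performed there, with the error term $2[\widetilde{\beta}(\tilde{w}'\tilde{g},\tilde{w}\tilde{g}) - \widetilde{\beta}(\tilde{w}',\tilde{w})]$ being a multiple of $4$ once symplecticity is invoked, shows that $q_{\tilde g}$ depends only on $w$. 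Polarizing gives $q_{\tilde g}(w_1+w_2) - q_{\tilde g}(w_1) - q_{\tilde g}(w_2) = \beta(w_1\dot g, w_2\dot g) - \beta(w_1,w_2)$, so $(\dot g, q_{\tilde g}) \in ASp(W)$. A telescoping identity
$$ q_{\tilde g_1\tilde g_2}(w) = q_{\tilde g_1}(w) + q_{\tilde g_2}(w\,\dot g_1) $$
matches the group law in $ASp(W)$, so $\iota : \tilde g \mapsto (\dot g, q_{\tilde g})$ is a homomorphism.

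To lift $\iota$ to the central extensions, I would pull back along $\iota$ the projective Weil representation of $AMp(W)$ on $V_\psi$ constructed in the body of the paper. This turns $V_\psi$ into a projective representation of $\Sp(\widetilde W)$, classified by the pullback cocycle $\iota^{\ast}c_{\Pi^{'\sim}_{\Psi}}$, a priori valued in $\mu_4$. The required homomorphism $Mp(\widetilde W) \to AMp(W)$ is then equivalent to this projective representation admitting a linearization on $Mp(\widetilde W)$; equivalently, the cocycle $\iota^{\ast}c_{\Pi^{'\sim}_{\Psi}}$ must be cohomologous, as a $\mu_4$-valued cocycle, to the image under $\mu_2 \hookrightarrow \mu_4$ of the defining cocycle of $Mp(\widetilde W)$.

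The main obstacle is this last cocycle comparison. Using the formula $c_{PR,\mathcal{X}^{\ast}}(g_1,g_2) = \gamma(\Psi(q(g_1,g_2)/2))$ in terms of the Leray invariant, one must control the Weil indices of the quadratic forms arising when $g_1, g_2$ are sections over $\iota(\Sp(\widetilde W))$. Because such sections can be chosen inside $\Sp(\mathscr{W}_{\mathfrak{O}})$, the Lagrangian triples involved are $\mathfrak{O}$-integral with discriminants congruent to $1$ modulo $\mathfrak{m}^2$, and their Weil indices reduce to signs; after a coboundary correction analogous to the normalization $m(g)$ introduced in the preceding $\mu_4$-subsection, the cocycle descends to $\mu_2$. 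Uniqueness of the non-trivial double cover of $\Sp(\widetilde W)$ then identifies the resulting $\mu_2$-extension with $Mp(\widetilde W)$, and the universal property of the pullback yields the map $Mp(\widetilde W) \to AMp(W)$. The main technical work is precisely this Weil-index book-keeping, which parallels and slightly refines the descent from $\mu_8$ to $\mu_4$ carried out earlier in the paper.
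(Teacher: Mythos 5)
The paper itself gives no proof of this theorem — it simply cites Gurevich--Hadani (Thm.~1.4) and Genestier--Lysenko (Prop.~1) — so there is nothing in the paper to compare your argument against directly; you are attempting to supply a proof that the paper omits. Your first stage is correct and cleanly done: since you start from $\tilde g \in \Sp(\widetilde W)$ rather than from an arbitrary lift of an element of $\Sp(W)$, the cochain $q_{\tilde g}(w)=\widetilde\beta(\tilde w\tilde g,\tilde w\tilde g)-\widetilde\beta(\tilde w,\tilde w)$ is canonical, the polarization identity gives $(\dot g,q_{\tilde g})\in ASp(W)$, and the telescoping identity $q_{\tilde g_1\tilde g_2}=q_{\tilde g_1}+{}^{\dot g_1}q_{\tilde g_2}$ shows $\iota$ is a genuine homomorphism, not merely a section as in the paper's preceding lemma for $\GSp$.

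The second stage, however, has two real gaps. First, the assertion that ``the Lagrangian triples involved are $\mathfrak O$-integral with discriminants congruent to $1$ modulo $\mathfrak m^2$, and their Weil indices reduce to signs'' is not justified and is not automatic over a dyadic local field: the Weil index $\gamma(\Psi\circ q)$ of a nondegenerate quadratic form over $K$ genuinely takes values in $\mu_8$, and $\mathfrak O$-integrality of the Leray form does not by itself force its Weil index into $\mu_2$. Indeed, the $\mu_8\to\mu_4$ descent carried out in the paper already requires nontrivial work with the normalizing factors $m(g)$ and the modified cocycles $c_{\Pi^{'\sim}_\Psi}$, $c_{\Pi^{''\sim}_\Psi}$; the further descent to $\mu_2$ is precisely the content of the theorem and cannot be dispatched as ``analogous but slightly refined'' without an actual computation. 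Second, the final identification relies on ``uniqueness of the non-trivial double cover of $\Sp(\widetilde W)$''. Here $\Sp(\widetilde W)\cong\Sp_{2m}(R)$ with $R=\mathfrak O/\mathfrak m^2$ a non-field Artinian local ring, and the claim $H^2(\Sp_{2m}(R),\mu_2)\cong\mathbb Z/2$ is not a standard fact the paper provides; you would need either to compute this cohomology group or to exhibit the resulting extension directly and compare it with a concrete model of $Mp(\widetilde W)$. As it stands, one cannot rule out the pullback class being trivial (in which case the map would factor through $\Sp(\widetilde W)$) or being some other $\mu_2$- or $\mu_4$-class. So the architecture of your proof is reasonable and matches the lattice-model spirit of the paper, but the Weil-index descent and the cohomological uniqueness are both open steps, and together they constitute the entire difficulty of the theorem.
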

\begin{proof}
 See \cite[Thm.1.4]{GuHa}, \cite[Prop.1]{GeLy}.
 \end{proof}
\labelwidth=4em
\addtolength\leftskip{25pt}
\setlength\labelsep{0pt}

\end{document}